\definecolor{greytext}{gray}{0.5}
\renewcommand\UrlFont{\color{black!80}}
\DeclareUrlCommand\DOI{}
\newcommand\theoremlevel{subsection}
\definecolor{greytext}{gray}{0.5}
\newtheorem{maintheorem}{Theorem}
    \newtheorem{theorem}{Theorem}%
    \newtheorem{theorem}{Theorem}[\theoremlevel]%
\newtheorem{corollary}[theorem]{Corollary}
\newtheorem{lemma}[theorem]{Lemma}
\newtheorem{proposition}[theorem]{Proposition}
\theoremstyle{definition}
\newtheorem{definition}[theorem]{Definition}
\theoremstyle{remark}
\newtheorem{remark}[theorem]{Remark}
\let\oldcite\cite
\renewcommand\cite[2][]{{\rm\oldcite[#1]{#2}}}
\numberwithin{equation}{section}
\def\(#1\){$\displaystyle#1$}
\def\[#1\]{\begin{align*}#1\end{align*}}
\renewcommand\bar[1]{\overline{#1}}
\renewcommand\hat[1]{\widehat{#1}}
\renewcommand\tilde[1]{\widetilde{#1}}
\newcommand\bI{\mathbb{I}}
\newcommand\bS{\mathbb{S}}
\newcommand\Z{\mathbb{Z}}
\newcommand\bDelta{\mathbf{\Delta}}
\newcommand\bSigma{\mathbf{\Sigma}}
\newcommand\bTheta{\mathbf{\Theta}}
\newcommand\cM{\mathcal{M}}
\newcommand\sA{\mathscr{A}}
\newcommand\sB{\mathscr{B}}
\newcommand\sC{\mathscr{C}}
\newcommand\sD{\mathscr{D}}
\newcommand\sK{\mathscr{K}}
\newcommand\sP{\mathscr{P}}
\newcommand\sS{\mathscr{S}}
\newcommand\sX{\mathscr{X}}
\newcommand\sY{\mathscr{Y}}
\newcommand\sZ{\mathscr{Z}}
\newcommand\id{\operatorname{id}}
\newcommand\op{\mathrm{op}}
\newcommand\Ho{\operatorname{Ho}}
\newcommand\Hom{\operatorname{Hom}}
\newcommand\Map{\operatorname{Map}}
\newcommand\Seg{\operatorname{Seg}}
\newcommand\Sp{\operatorname{Sp}}
\newcommand\varlim{\varprojlim}
\newcommand\varcolim{\varinjlim}
\newcommand{\pto}{}% just for safety
\newcommand{\pgets}{}% just for safety
\DeclareRobustCommand{\pto}{\mathrel{\mathpalette\p@to@gets\to}}
\DeclareRobustCommand{\pgets}{\mathrel{\mathpalette\p@to@gets\gets}}
\newcommand{\p@to@gets}[2]{%
  \ooalign{\hidewidth$\m@th#1\mapstochar\mkern5mu$\hidewidth\cr$\m@th#1\to$\cr}%
}
\newcommand\Cat{\mathbf{Cat}}
\newcommand\Fun{\mathbf{Fun}}
\newcommand\Grpd{\mathbf{Grpd}}
\newcommand\Pres{\mathbf{Pres}}
\newcommand\Set{\mathbf{Set}}
\newcommand\Sh{\mathbf{Sh}}
\tikzset{Rightarrow/.style={double equal sign distance,>={Implies},->},
Rrightarrow/.style={-,preaction={draw,Rightarrow}},
Rrrightarrow/.style={preaction={draw,Rightarrow,shorten >=0pt},shorten >=1pt,-,double,double
distance=0.2pt}}
\newcommand{\Gp}{\operatorname{Gp}}
\renewcommand{\SS}{\mathbf{SS}}
\newcommand{\CSS}{\mathbf{CSS}}
\newcommand{\Dist}{\mathbf{Dist}}
\title{Sheaves of $(\infty, \infty)$-categories}
\date{\monthname~\the\year}
\author{Zach Goldthorpe}
\begin{document}
\maketitle
\begin{abstract}
    We provide a functorial presentation of the $(\infty, 1)$-category of sheaves of $(n, r)$-categories for all $-2\leq n\leq\infty$ and $0\leq r\leq n+2$ based on complete Segal space objects.
    In this definition, the equivalences of sheaves of $(\infty, \infty)$-categories are defined inductively, so we also provide a localisation at the coinductive equivalences to define the $(\infty, 1)$-category of sheaves of $\omega$-categories as well.
    
    We prove that the $(\infty, 1)$-category of sheaves of $(\infty, \infty)$-categories, and the $(\infty, 1)$-category of sheaves of $\omega$-categories both define distributors over the underlying topos of sheaves of spaces.
    Moreover, we show that these distributors define terminal and initial fixed points with respect to the construction of complete Segal space objects in a distributor.
    
    We conclude with a sheafification result: the category of sheaves of $(n, r)$-categories over a site $\sC$ can be presented as a strongly reflective localisation of $\Fun(\sC^\op, \Cat_{(n, r)})$, where the localisation functor preserves fibre products over $\Sh(\sC)$, with the analogous result also holding for sheaves of $\omega$-categories.
\end{abstract}

\tableofcontents{}

\section{Introduction}
\label{sec:intro}

A higher category is a generalisation of an ordinary category consisting of objects, $1$-morphisms between objects, $2$-morphisms between $1$-morphisms, $3$-morphisms between $2$-morphisms, \emph{ad infinitum}.
For $-2\leq n\leq\infty$ and $0\leq r\leq n+2$, an \emph{$(n, r)$-category} is a higher category where
\begin{itemize}
    \item parallel $k$-morphisms are equivalent for $k > n$, and
    \item $k$-morphisms are invertible for $k > r$.
\end{itemize}
For $r < \infty$, the notion of $(n, r)$-category is unambiguous up to an action of $(\Z/2)^r$ by \cite{barwick-schommer-pries}.
However, there are several inequivalent notions of $(\infty, \infty)$-category, due to nuances in the concept of invertibility in a purely infinite-dimensional setting.

We have a tower of inclusions
$$
\Cat_{(\infty, 0)} \subseteq \Cat_{(\infty, 1)} \subseteq \Cat_{(\infty, 2)} \subseteq \dots
$$
where each inclusion $\Cat_{(\infty, r)}\subseteq\Cat_{(\infty, r+1)}$ admits a left adjoint $\pi_{\leq r}$ and a right adjoint $\kappa_{\leq r}$.
For an $(\infty, r+1)$-category $\sC$, we obtain the $(\infty, r)$-category $\pi_{\leq r}\sC$ by formally inverting the $(r+1)$-morphisms of $\sC$.
On the other hand, $\kappa_{\leq r}\sC$ is just the maximal sub-$(\infty, r)$-category of $\sC$, obtained by discarding the non-invertible $(r+1)$-morphisms.
From these adjoints, we obtain two natural constructions of the $(\infty, 1)$-category of $(\infty, \infty)$-categories:
\[
    \Cat_{(\infty, \infty)} &:= \varlim\left(\dots \to \Cat_{(\infty, 2)} \xrightarrow{\kappa_{\leq1}} \Cat_{(\infty, 1)} \xrightarrow{\kappa_{\leq0}} \Cat_{(\infty, 0)}\right) \\
    \Cat_\omega &:= \varlim\left(\dots \to \Cat_{(\infty, 2)} \xrightarrow{\pi_{\leq1}} \Cat_{(\infty, 1)} \xrightarrow{\pi_{\leq0}} \Cat_{(\infty, 0)}\right)
\]
Roughly speaking, $\Cat_{(\infty, \infty)}$ consists of higher categories wherein equivalences are defined inductively from the identity $k$-morphisms, whereas $\Cat_\omega$ consists of higher categories wherein equivalences are defined coinductively; see \cite[Remark 3.0.5]{goldthorpe:fixed}.

There are several approaches to defining higher categories, but they are largely guided by one of the two principles:
\begin{itemize}
    \item An $(n+1, r+1)$-category is a category that is suitably \emph{enriched} in $(n, r)$-categories; that is, an $(n+1, r+1)$-category is a suitable category $\sC$ such that $\Hom_\sC(x, y)$ is an $(n, r)$-category for every $x, y\in\sC$.
    \item An $(n+1, r+1)$-category is a suitable category object \emph{internal} to $(n, r)$-categories; that is, an $(n+1, r+1)$-category consists of an $(n, r)$-category $\sC_0$ of objects, and an $(n, r)$-category $\sC_1$ of morphisms, such that $(\sC_0, \sC_1)$ satisfies a suitable analogue of the axioms of category theory.
\end{itemize}
Although enrichment provides a more concise inductive description of higher categories---$(n+1, r+1)$-categories should be precisely the categories enriched in $(n, r)$-categories---a suitably general theory of enrichment proved too difficult to develop without already establishing a sufficiently general theory of higher categories.

On the other hand, a suitable theory of internalisation was more readily available using the tools from abstract homotopy theory.
The caveat is that a general category object $\sC_\bullet$ in $\Cat_{(n, r)}$ has more structure than that of an $(n+1, r+1)$-category.
Between objects in $\sC_0$, there are two distinct notions of a morphism between them: there are the objects of $\sC_1$, and there are also the morphisms of $\sC_0$.

One way to avoid this caveat is to assert that $\sC_0$ is a set; that is, $\sC_0$ is a $(0, 0)$-category.
We can then think of $\sC_0$ as the underlying set of objects of the $(n+1, r+1)$-category.
This leads to the notion of higher \emph{Segal categories}, which is developed in detail, for instance, in \cite{simpson}.
However, the ``underlying set'' of objects in an ordinary category is not invariant under equivalence, and this complicates the notion of equivalence between higher Segal categories.
A functor $\sC_\bullet\to\sD_\bullet$ of $(n+1, r+1)$-categories should be an equivalence precisely if it is fully faithful on the $(n, r)$-categories of morphisms, and essentially surjective on the objects.
Since essential surjectivity cannot be tested on the underlying functor $\sC_0\to\sD_0$, it must be defined by extracting the higher equivalences from the categorical structure of $\sC_\bullet$ and $\sD_\bullet$.

Alternatively, we can instead assert that $\sC_0$ is an $\infty$-groupoid; that is, all $k$-morphisms in $\sC_0$ are invertible.
Then, $\sC_0$ serves as the underlying $n$-groupoid of $\sC_\bullet$, which is an invariant of $\sC_\bullet$ under equivalence.
By the Homotopy Hypothesis, we can therefore view $\sC_0$ as a space.
This leads to the notion of higher \emph{Segal spaces}.
However, this means that the objects of $\sC_\bullet$ have two distinct notions of equivalence:
\begin{itemize}
    \item Path-connectedness in the space $\sC_0$, and
    \item Isomorphism from the categorical structure of $\sC_\bullet$.
\end{itemize}
Therefore, we restrict to \emph{complete Segal spaces}, which are those Segal spaces for which these two notions of equivalence coincide.
Although the model for $(n+1, r+1)$-categories becomes more technical, requiring the completeness condition, the resulting equivalences are easier to define: a functor $\sC_\bullet\to\sD_\bullet$ of complete Segal spaces modelling $(n+1, r+1)$-categories is an equivalence precisely if $\sC_0\to\sD_0$ is a homotopy equivalence (giving essential surjectivity), and $\sC_1\to\sD_1$ is an equivalence of $(n, r)$-categories (giving fully faithfulness).

Complete Segal spaces were first defined by Rezk in \cite{rezk:css}.
Barwick then defined $r$-fold iterated complete Segal spaces as a model for $(\infty, r)$-categories in \cite{barwick} (see \cite[\S14]{barwick-schommer-pries}).
Lurie expanded this construction and axiomatised its iterative nature in \cite[\S1]{lurie:infty2}, providing a means to construct complete Segal space objects in any \emph{distributor}.

Intuitively, a distributor $\sY$ is a locally presentable $(\infty, 1)$-category with a choice of full subcategory $\sX\subseteq\sY$ consisting of the ``spaces.''
Then, the $(\infty, 1)$-category $\CSS_\sX(\sY)$ of complete Segal space object in $\sY$ consists of categories $\sC_\bullet$ internal to $\sY$ such that $\sC_0\in\sX$ (i.e., $\sC_0$ is ``space-like''), and $\sC_\bullet$ is complete.
The axioms of a distributor imply that the subcategory $\sX$ of ``spaces'' is always an $(\infty, 1)$-topos; that is, an $(\infty, 1)$-category of sheaves of spaces.
In particular, $r$-fold complete Segal spaces over $\sX$ define a model for sheaves of $(\infty, r)$-categories.

In this paper, we use the language of complete Segal spaces to define a presentation of the $(\infty, 1)$-category of sheaves of $(n, r)$-categories over an $(\infty, 1)$-topos $\sX$ as a localisation of $r$-fold simplicial objects in $\sX$.
We then show that the $(\infty, 1)$-categories of sheaves of $(\infty, \infty)$-categories forms a distributor over $\sX$.
In fact, the category of sheaves of $(\infty, \infty)$-categories over $\sX$ is completely characterised by the construction of complete Segal spaces over $\sX$.
More precisely, we construct an $(\infty, 1)$-category $\Dist_\sX^R$ of distributors over $\sX$ and show that sheaves of $(\infty, \infty)$-categories satisfies a universal property with respect to the construction of complete Segal space objects:
\begin{maintheorem}[\cref{thm:CSS-terminal}]\label{thm:A}
    The $(\infty, 1)$-category of sheaves of $(\infty, \infty)$-categories is the terminal object in the $(\infty, 1)$-category $\mathbf{Fix}_{\CSS}^R$ of distributors $\sY$ over $\sX$ with an equivalence $\sY\xrightarrow\sim\CSS_\sX(\sY)$.
\end{maintheorem}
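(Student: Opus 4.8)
The plan is to prove the two halves of the theorem — terminality of sheaves of $(\infty,\infty)$-categories and initiality of sheaves of $\omega$-categories in $\mathbf{Fix}_{\CSS}^R$ — by exploiting the towers $\Cat_{(\infty,\infty)} = \varlim_r \Cat_{(\infty,r)}$ and $\Cat_\omega = \varlim_r \Cat_{(\infty,r)}$ together with the self-referential identity $\CSS_\sX(\sY)\simeq\sY$. The key structural observation is that $\CSS_\sX(-)$ sends the $r$-fold sheaves of $(\infty,r)$-categories to the $(r+1)$-fold ones, so that iterating $\CSS_\sX$ starting from $\sX$ itself produces precisely the layers of the two towers; the limits of these towers are then the candidate fixed points, and one wants to show they are the extremal such fixed points. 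I would first set up, in a preliminary section, the functor $\CSS_\sX\colon\Dist_\sX^R\to\Dist_\sX^R$ as an endofunctor on the $(\infty,1)$-category of distributors over $\sX$, check that it preserves the relevant limits and colimits along the morphisms $\kappa_{\leq r}$ and $\pi_{\leq r}$, and record that $\mathbf{Fix}_{\CSS}^R$ is (equivalent to) the $(\infty,1)$-category of coalgebras, resp. algebras, for this endofunctor — or more precisely the category of its (homotopy) fixed points with the appropriate variance of morphisms built into $\Dist_\sX^R$.

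For terminality, given any distributor $\sY$ with an equivalence $e\colon\sY\xrightarrow{\sim}\CSS_\sX(\sY)$, I would construct the comparison morphism $\sY\to\Cat_{(\infty,\infty)}(\sX)$ as the limit of a cone: there is a canonical morphism $\sY\to\sX$ (the ``underlying space'' functor, part of the distributor structure), and composing with $e$ and iterating $\CSS_\sX$ gives a compatible family $\sY\simeq\CSS_\sX^{\,r}(\sY)\to\CSS_\sX^{\,r}(\sX)$ landing in the $r$-th stage; the transition maps in the tower are the $\kappa_{\leq r}$'s because the ``maximal subcategory'' functor is exactly what $\CSS_\sX$ does to the truncation map $\sY\to\sX$ on the nose. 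Uniqueness of this cone morphism, up to contractible choice, should follow because any morphism $\sY\to\Cat_{(\infty,\infty)}(\sX)$ in $\Dist_\sX^R$ is determined by its composites with the projections to the stages $\Cat_{(\infty,r)}(\sX)$, and each such composite is forced by compatibility with the fixed-point structures (an induction on $r$, the base case $r=0$ being the unique distributor morphism $\sY\to\sX$). The dual argument for initiality uses the $\pi_{\leq r}$ tower: the universal ``generated by'' functor $\sX\to\sY$ supplied by the distributor structure, composed with $e^{-1}$ and iterated under $\CSS_\sX$, assembles into $\Cat_\omega(\sX)\to\sY$ as a colimit over the tower, with transition maps the left adjoints $\pi_{\leq r}$; uniqueness again reduces by induction to the initiality of $\sX$ among distributors.

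The main obstacle I anticipate is making the iteration of $\CSS_\sX$ coherent: naively one gets a sequence of equivalences $\sY\simeq\CSS_\sX(\sY)\simeq\CSS_\sX^2(\sY)\simeq\cdots$, but to extract a genuine morphism into the $\varlim$ one needs these to fit into a diagram indexed by $\mathbb{N}$ (or $\mathbb{N}^{\op}$) with all higher coherences, which means the fixed-point datum $e$ must be upgraded to a coherent action of the monoid of iterates of $\CSS_\sX$, and the category $\mathbf{Fix}_{\CSS}^R$ must be defined so as to encode exactly this. I would handle this by defining $\mathbf{Fix}_{\CSS}^R$ as a homotopy fixed point / lax-equalizer construction (e.g. as sections of a fibration built from $\CSS_\sX$, in the style of Lurie's twisted-arrow or relative-nerve constructions), so that an object literally \emph{is} such a coherent tower, and then the comparison morphisms become tautological maps out of, or into, the limit/colimit. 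A secondary subtlety is verifying that $\Cat_{(\infty,\infty)}(\sX)$ and $\Cat_\omega(\sX)$ are themselves objects of $\mathbf{Fix}_{\CSS}^R$ — i.e. that $\CSS_\sX$ commutes with the two $\varlim$'s — which is where the distinction between the inductive ($\kappa$) and coinductive ($\pi$) equivalences, and the corresponding need for the $\omega$-localisation mentioned in the abstract, will be doing real work; I expect this to rely on $\CSS_\sX$ preserving the relevant cofiltered limits of distributors, proved by reduction to the level of the underlying $r$-fold simplicial objects where everything is computed pointwise.
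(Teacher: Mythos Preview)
Your proposal is correct and follows essentially the same strategy as the paper: recognise $\Sh_{(\infty,\infty)}(\sX)$ and $\Sh_\omega(\sX)$ as the limit and colimit in $\Dist_\sX^R$ of the tower $\sX, \CSS_\sX(\sX), \CSS_\sX^2(\sX), \ldots$ along the maps induced from $\kappa_\sX$ (respectively, the inclusions), verify that $\CSS_\sX$ preserves these (weakly contractible) limits and (filtered, fully faithful) colimits so that the limit/colimit is again a fixed point, and then invoke the Ad\'amek-style universal property using that $\sX$ is both initial and terminal in $\Dist_\sX^R$.

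The main difference is one of packaging. The paper does not build the comparison maps by hand or set up $\mathbf{Fix}_{\CSS}^R$ as a lax equaliser; instead it cites a general fixed-point framework (\cite[Corollaries 2.2.9 and 2.3.3]{goldthorpe:fixed}) which takes as input exactly the data you produce --- an endofunctor on a category with the relevant (co)limits, preserving them, together with a terminal/initial object --- and outputs the terminal coalgebra / initial algebra as the Ad\'amek (co)limit, with all coherence handled once and for all there. Your anticipated ``main obstacle'' about upgrading the iterated equivalence $\sY\simeq\CSS_\sX^r(\sY)$ to a coherent $\N$-indexed diagram is precisely what that cited machinery absorbs, so the paper's proofs of \cref{thm:CSS-terminal,thm:CSS-initial} are each only a few lines. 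Your approach would work but reproves a chunk of that general theory in place; conversely, your hands-on construction makes the universal map more visible than a black-box citation does.
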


\subsection{Organisation of paper}
\label{ssec:org}

\Cref{sec:sheaves} focuses on an explicit presentation of sheaves of higher categories.
In \cref{ssec:poly}, we define polysimplicial sheaves to serve as the underlying structure of sheaves of higher categories.
We then provide an explicit definition of sheaves of $(n, r)$-categories for $-2 \leq n\leq\infty$ and $0\leq r\leq n+2$ in terms of polysimplicial sheaves in \cref{ssec:hco}.
In \cref{ssec:sh-infty}, we prove that sheaves of $(\infty, \infty)$-categories can be constructed as a suitable limit of sheaves of $(\infty, r)$-categories, analogous to the construction of $\Cat_{(\infty, \infty)}$.

In \cref{sec:distributors}, we compare the constructions of \cref{sec:sheaves} to the construction of complete Segal spaces in distributors.
We recall the preliminary theory of distributors and complete Segal spaces in \cref{ssec:prelim}.
Then, in \cref{ssec:dist}, we construct the $(\infty, 1)$-category $\Dist_\sX^R$, and prove that it has all small limits and colimits.
We show that the construction of complete Segal spaces is functorial over $\Dist_\sX^R$ in \cref{ssec:css}, and moreover that it preserves weakly contractible limits.
In \cref{ssec:univ}, we show that the constructions in \cref{sec:sheaves} can be expressed in terms of complete Segal space objects in distributors, and prove \cref{thm:A}.
Then, we briefly explore the functoriality of the construction of sheaves of higher categories in \cref{ssec:sheafification}, and moreover prove that presheaves of higher categories admit a relatively left exact sheafification functor.

\subsection{Notation and terminology}
\label{ssec:notation}

In this paper, unless otherwise specified, unprefixed categorical notions refer to their $(\infty, 1)$-categorical analogues: ``categories'' are $(\infty, 1)$-categories, ``topoi'' are $(\infty, 1)$-topoi, and ``(co)limits'' are $(\infty, 1)$-(co)limits.

Let $\bDelta$ denote the simplex category, and let $\sS$ denote the category of homotopy types.
Given a small category $\sC$, let $\sP(\sC) := \Fun(\sC^\op, \sS)$ denote the category of presheaves on $\sC$.

The categories $\Cat_{(n, r)}$ for $-2\leq n\leq\infty$ and $0\leq r\leq n$ and $\Cat_\omega$ refer to the categories of (small) higher categories defined in \cite[Definitions 3.0.1 and 3.0.4]{goldthorpe:fixed}.
On the other hand, let $\hat{\Cat_\infty}$ denote the huge category of large categories.

For a regular cardinal $\lambda$, say that a functor is \emph{$\lambda$-accessible} if it preserves $\lambda$-filtered colimits.
In particular, say that a functor is \emph{accessible} if it is $\omega$-accessible.
A category $\sC$ is said to be a \emph{$\lambda$-accessible localisation} of $\sD$ if $\sC$ embeds fully faithfully into $\sD$ via a $\lambda$-accessible right adjoint, and the localisation is moreover said to be \emph{left exact} if the corresponding left adjoint is left exact.

For a regular cardinal $\lambda$, say that a category is \emph{locally $\lambda$-presentable} if it is a $\lambda$-accessible localisation of a presheaf category (which are called ``$\lambda$-compactly generated'' categories in \cite{lurie}), and say that a category is \emph{locally presentable} if it is locally $\lambda$-presentable for some $\lambda\gg0$ (which are called ``presentable'' categories in \cite{lurie}).
Denote by $\Pres_\infty^L$ or $\Pres_\infty^R$ the huge categories of locally presentable categories, and left or right adjoints, respectively.

Say that a functor $F : \sC \to \sD$ \emph{creates limits} if any $p : K^\triangleleft\to\sC$ is a limit cone if and only if $F\circ p$ is a limit cone.
The concept of creating colimits is entirely dual.

\section{Sheaves of higher categories}
\label{sec:sheaves}

Throughout this section, fix a topos $\sX$, presented as a left exact accessible localisation of $\sP(\sC)$ for some small category $\sC$, where the localisation functor is denoted $(-)^\#:\sP(\sC)\to\sX$.
The goal of this section is to explicitly construct the category $\Sh_{(n, r)}(\sC)$ of sheaves of $(n, r)$-categories over $\sC$, where $-2\leq n\leq\infty$ and $0\leq r\leq n+2$, as well as the category $\Sh_\omega(\sC)$ of sheaves of $\omega$-categories.

\subsection{Polysimplicial sheaves}
\label{ssec:poly}

\begin{definition}\label{def:poly}
    Define the \emph{polysimplex category} $\bSigma_\infty$ as the following subcategory of $\bDelta^{\times\infty}$.
    \begin{itemize}
        \item The objects of $\bSigma_\infty$ are infinite sequences $[\vec k] = (k_0, k_1, k_2, \dots)$ of simplices such that there exists some $r\geq0$ such that $k_n = 0$ if and only if $n\geq r$.
            Call $r$ the \emph{dimension} of $[\vec k]$, denoted $\dim[\vec k] := r$.
        \item The morphisms $\vec\phi : [\vec k]\to[\vec\ell]$ are sequences of morphisms $\phi_n : [k_n]\to[\ell_n]$ in $\bDelta$ such that there exists some $r\geq0$ such that $\phi_n = 0$ if and only if $n\geq r$.
    \end{itemize}
    The composition of two morphisms $[\vec k]\xrightarrow{\vec\phi}[\vec\ell]\xrightarrow{\vec\psi}[\vec m]$ is given as follows.
    Let $r\geq0$ be minimal such that $\psi_r\phi_r$ is constant.
    Then,
    $$
    (\vec\psi\circ\vec\phi)_r = \begin{cases} \psi_n\phi_n, & \text{if $n\leq r$} \\ \psi_r\phi_r, & \text{otherwise} \end{cases}
    $$
    
    Denote by $\bSigma_r$ the full subcategory of $\bSigma_\infty$ spanned by polysimplices of dimension $\leq r$.
\end{definition}
\begin{remark}\label{rem:poly-quot}
    The category $\bSigma_r$ is equivalent to the quotient category $\bTheta^r$ of $\bDelta^{\times r}$ defined in \cite[\S2]{simpson:svk}.
    We choose different notation so as to not confuse the polysimplex category with Joyal's disk category.
\end{remark}

\begin{definition}\label{def:trunc}
    For a polysimplex $[\vec k]$ and $r\geq0$, define the polysimplices
    \[
        [\vec k]_{<r} &:= (k_0, k_1, k_2, \dots, k_{r-1}, 0, \dots) \\
        [\vec k]_{\geq r} &:= (k_r, k_{r+1}, k_{r+2}, \dots) \\
    \]
    These constructions extend to define endofunctors on $\bSigma_\infty$.
    Note that $[-]_{<r}$ restricts to a right adjoint to the inclusion $\bSigma_r\subseteq\bSigma_\infty$.

    Given two polysimplices $[\vec k]$ and $[\vec\ell]$, let $[\vec k,\vec\ell]$ denote their concatenation; that is, the unique polysimplex such that $[\vec k,\vec\ell]_{<r} = [\vec k]$ and $[\vec k,\vec\ell]_{\geq r}=[\vec\ell]$, where $r = \dim[\vec k]$.
\end{definition}

\begin{definition}\label{def:poly-sheaf}
    A \emph{polysimplicial sheaf} over $(\sC, (-)^\#)$ is a polysimplicial object in $\sX$; that is, a functor $\bSigma_\infty^\op\to\sX$.

    For a polysimplicial sheaf $X:\bSigma_\infty^\op\to\sX$ and a polysimplex $[\vec k]$, let $\Delta[\vec k, X] : \bSigma_\infty^\op\to\sX$ denote the polysimplicial sheaf given by the left Kan extension of $X$ along the functor $[\vec k,-]:\bSigma_\infty\to\bSigma_\infty$.
    By definition, it follows for all polysimplicial sheaves $X'$ that we have a natural equivalence
    $$
    \Map(\Delta[\vec k, X], X') \simeq \Map(X, X'_{\vec k,\bullet})
    $$
    
    We are also interested in a couple of special cases:
    \begin{itemize}
        \item For a polysimplex $[\vec k]$ and object $U\in\sC$, define the \emph{$U$-local $\vec k$-polysimplex} to be $\Delta_U[\vec k] := \Delta[\vec k,h_U^\#]$, which corepresents the functor $\Fun(\bSigma_\infty^\op,\sX)\to\sS$ mapping $X_\bullet\mapsto X_{\vec k}(U)$.
        \item Define the \emph{suspenion} of a polysimplicial sheaf $X$ to be $\Sigma X := \Delta[1, X]$, which is left adjoint to the \emph{desuspension} $\Omega X' := X'_{1,\bullet}$.
    \end{itemize}
\end{definition}
\begin{remark}\label{rem:zero-simplex}
    The functor $[0,-] : \bSigma_\infty \to \bSigma_\infty$ is constant on the terminal object $[0]\in\bSigma_\infty$.
    In particular, the polysimplicial sheaf $\Delta[0, X]$ is the terminal polysimplicial sheaf, so we may denote this object by $\Delta[0]$.
    
    On the other hand, the $U$-local $0$-polysimplex $\Delta_U[0]$ is \emph{not} the terminal object; rather, it is the corepresenting object for the functor mapping $X_\bullet\mapsto X_0(U)$.
    This is precisely the representable sheaf $\Delta_U[0]\simeq h_U^\#$.
\end{remark}
\begin{remark}\label{rem:poly-sheaf-gen}
    Since $\Fun(\bSigma_\infty^\op,\sX)$ is a localisation of $\sP(\bSigma_\infty\times\sC)$, every polysimplicial sheaf is a canonical colimit of the local polysimplices $\Delta_U[\vec k]$.
\end{remark}
\begin{remark}\label{rem:r-poly-sheaf}
    Left Kan extension along $\bSigma_r\subseteq\bSigma_\infty$ defines a fully faithful functor $\Fun(\bSigma_r^\op,\sX) \subseteq \Fun(\bSigma_\infty^\op,\sX)$.
    We may refer to objects in the essential image of this inclusion as $r$-polysimplicial sheaves.

    Explicitly, we view $X : \bSigma_r^\op\to\sX$ as an $r$-polysimplicial sheaf by taking $X_{\vec k} := X_{\vec k_{<r}}$ for every $[\vec k]\in\bSigma_\infty$.
\end{remark}

We now establish some useful preliminary results before studying higher category objects in $\sX$.

\begin{definition}\label{def:spine}
    For a polysimplex $[\vec k]$ and an object $U\in\sC$, define the \emph{$U$-local $\vec k$-spine} to be the pushout
    $$
    \Sp_U[\vec k] := \underbrace{\Delta_U[1, \vec k_{\geq1}]\sqcup_{\Delta[0]}\dots\sqcup_{\Delta[0]}\Delta_U[1, \vec k_{\geq1}]}_{k_0}
    $$
    in $\Fun(\bSigma_\infty^\op,\sX)$, induced by the inert maps $[0],[1]\hookrightarrow[k_0]$ in $\bDelta$.
    In particular, the $U$-local $\vec k$-spine admits a canonical inclusion $\Sp_U[\vec k]\subseteq\Delta_U[\vec k]$.
\end{definition}

Recall from \cite[Definition 5.5.4.5]{lurie} that a class $W$ of morphisms in a category $\sD$ is \emph{strongly saturated} if it is closed under pushouts along arbitrary morphisms of $\sD$, it is closed under colimits in $\Fun([1], \sD)$, and it satisfies 2-out-of-3.

\begin{lemma}\label{lem:spine}
    For $0\leq r\leq\infty$, let $X : \bSigma_r^\op\to\sX$.
    If $[\ell] = [\ell^1]\sqcup_{[0]}[\ell^2]$ in $\bDelta$, then the inclusion
    $$
    \Delta[\ell^1,X]\sqcup_{\Delta[0]}\Delta[\ell^2,X] \hookrightarrow \Delta[\ell, X]
    $$
    lies in the strongly saturated class of morphisms generated by the local spine inclusions in $\Fun(\bSigma_{r+1}^\op,\sX)$.
\end{lemma}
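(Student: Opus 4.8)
The plan is to reduce the general decomposition $[\ell] = [\ell^1] \sqcup_{[0]} [\ell^2]$ to the case of a single elementary ``spine'' edge by induction on $\ell^1$ and $\ell^2$, and to handle the base case using the local spine inclusions together with the Kan-extension adjunction of \cref{def:poly-sheaf}. First I would observe that the functor $\Delta[-, X] : \bDelta \to \Fun(\bSigma_\infty^\op, \sX)$ (restricting $\Delta[\vec k, X]$ to polysimplices of the form $[\ell, \vec k_{\geq 1}]$ when $X$ lives on $\bSigma_r^\op$ and recording only the leading simplex, with the rest determined by $X$) sends the colimit $[\ell] = [\ell^1] \sqcup_{[0]} [\ell^2]$ to a map out of the pushout $\Delta[\ell^1, X] \sqcup_{\Delta[0]} \Delta[\ell^2, X]$; this is just naturality of left Kan extension and the fact that $\Delta[-, X]$ preserves the particular pushout $[\ell] = [\ell^1]\sqcup_{[0]}[\ell^2]$ in $\bSigma_{r+1}$ on the nose only when one of $\ell^1, \ell^2$ has length $\leq 1$ — in general it does not, which is exactly why the statement is about the \emph{saturated class generated} by spine inclusions rather than an equality.

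The main argument is the induction. Write $\Sp[\ell] := \Delta[1, X] \sqcup_{\Delta[0]} \cdots \sqcup_{\Delta[0]} \Delta[1, X]$ ($\ell$ copies), which by iterating the elementary decomposition $[\ell] = [1] \sqcup_{[0]} [\ell - 1]$ is the colimit of the spine diagram; the canonical map $\Sp[\ell] \hookrightarrow \Delta[\ell, X]$ is, by \cref{def:spine} applied with the polysimplex $[\ell, \vec k_{\geq 1}]$ and the relevant $U$ (more precisely, by taking the canonical colimit presentation of $X$ from \cref{rem:poly-sheaf-gen} and using that $\Delta[\ell, -]$ and pushouts commute with colimits), a colimit of $U$-local spine inclusions, hence lies in the strongly saturated class $W$ generated by them. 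Now for general $\ell^1, \ell^2$ I would show the map $\Delta[\ell^1, X] \sqcup_{\Delta[0]} \Delta[\ell^2, X] \hookrightarrow \Delta[\ell, X]$ lies in $W$ by noting it factors as
\[
    \Delta[\ell^1, X] \sqcup_{\Delta[0]} \Delta[\ell^2, X] \hookleftarrow \Sp[\ell^1] \sqcup_{\Delta[0]} \Sp[\ell^2] = \Sp[\ell] \hookrightarrow \Delta[\ell, X],
\]
wait — that has the wrong variance; instead I would argue directly: the inclusions $\Sp[\ell^i] \hookrightarrow \Delta[\ell^i, X]$ lie in $W$ by the previous paragraph, so by closure of $W$ under pushout the induced map $\Sp[\ell] = \Sp[\ell^1]\sqcup_{\Delta[0]}\Sp[\ell^2] \hookrightarrow \Delta[\ell^1,X]\sqcup_{\Delta[0]}\Sp[\ell^2] \hookrightarrow \Delta[\ell^1,X]\sqcup_{\Delta[0]}\Delta[\ell^2,X]$ is in $W$; since $\Sp[\ell] \hookrightarrow \Delta[\ell, X]$ is also in $W$, two-out-of-three gives that $\Delta[\ell^1,X]\sqcup_{\Delta[0]}\Delta[\ell^2,X] \hookrightarrow \Delta[\ell,X]$ is in $W$.

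The step I expect to be the main obstacle is the careful bookkeeping of \emph{which} ambient category the saturated class lives in and that the relevant colimits and pushouts are computed there correctly: the hypothesis puts $X$ on $\bSigma_r^\op$ but the spine inclusions and the whole argument take place in $\Fun(\bSigma_{r+1}^\op, \sX)$, so I need to check that $\Delta[\ell, X]$, formed by left Kan extension along $[\ell, -]$, indeed lands in $r+1$-polysimplicial sheaves (it does: its nonzero simplices involve at most one extra leading coordinate beyond those of $X$), and that the fully faithful inclusion of \cref{rem:r-poly-sheaf} lets us reduce the general colimit over $U$-local $\vec k$-polysimplices of $X$ to spine inclusions indexed by $\bSigma_{r+1}$. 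A secondary subtlety is verifying that $\Delta[\ell, -]$ commutes with the pushout along $\Delta[0]$ and with the colimit presentation of $X$ — this follows since $\Delta[\ell, -]$ is a left adjoint (left Kan extension), so it preserves all colimits, making both commutations automatic.
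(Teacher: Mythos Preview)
Your proposal is correct and follows essentially the same approach as the paper: both arguments hinge on the identification $\Sp[\ell^1]\sqcup_{\Delta[0]}\Sp[\ell^2]\simeq\Sp[\ell]$, the fact that each $\Sp[\ell^i]\hookrightarrow\Delta[\ell^i,X]$ lies in $W$, closure of $W$ under cobase change to get the map $i$ into $\Delta[\ell^1,X]\sqcup_{\Delta[0]}\Delta[\ell^2,X]$, and then 2-out-of-3 against the full spine inclusion into $\Delta[\ell,X]$. The only cosmetic difference is that the paper invokes \cref{rem:poly-sheaf-gen} at the outset to reduce to $X\simeq\Delta_U[\vec k]$ (so that the spine inclusions are literally the generating local ones), whereas you carry a general $X$ throughout and invoke the colimit presentation only to justify that your ``generalized'' spine inclusion $\Sp[\ell]\hookrightarrow\Delta[\ell,X]$ is a colimit of local spine inclusions in the arrow category---this is the same reduction, applied one step later.
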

\begin{proof}
    By definition, the strongly saturated class of morphisms generated by the local spine inclusions is closed under colimits in $\Fun([1], \Fun(\bSigma_{r+1}^\op,\sX))$.
    Therefore, \cref{rem:poly-sheaf-gen} allows us to reduce to the case where $X \simeq \Delta_U[\vec k]$.
    In this situation, consider the composite
    $$
    \begin{tikzcd}
        \Sp_U[\ell, \vec k] \ar[r, "\sim"]
        & \Sp_U[\ell^1, \vec k]\sqcup_{\Delta[0]}\Sp_U[\ell^2, \vec k] \ar[r, hook, "i"]
        & \Delta_U[\ell^1, \vec k]\sqcup_{\Delta[0]}\Delta[\ell^2, \vec k] \ar[r, hook, "j"]
        & \Delta_U[\ell, \vec k]
    \end{tikzcd}
    $$
    The map $j$ is precisely the inclusion $\Delta[\ell^1, X]\sqcup_{\Delta[0]}\Delta[\ell^2, X] \hookrightarrow \Delta[\ell, X]$.
    Note that the composite is a local spine inclusion, and $i$ is a pushout of local spine inclusions.
    Therefore, $j$ is in the strongly saturated class by 2-out-of-3.
\end{proof}

\begin{proposition}\label{prop:spine-redux}
    For $0\leq r\leq\infty$, let $W$ be any class of morphisms in $\Fun(\bSigma_r^\op,\sX)$.
    Then, the following classes generate the same strongly saturated class of morphisms in $\Fun(\bSigma_{r+1}^\op,\sX)$:
    \begin{enumerate}[label={(\arabic*)}]
        \item\label{it:W1}
            $\left\{\Delta[\ell^1,X]\sqcup_{\Delta[0]}\Delta[\ell^2, X]\hookrightarrow\Delta[\ell, X] :\middle|: [\ell] = [\ell^1]\sqcup_{[0]}[\ell^2]; X:\bSigma_r^\op\to\sX\right\}\cup\big\{\Delta[\ell, f] :\big|: f\in W; \ell > 0\big\}$
        \item\label{it:W2}
            $\left\{\Sp_U[\vec k] \hookrightarrow \Delta_U[\vec k] :\middle|: [\vec k]\in\bSigma_{r+1}; U\in\sC\right\}\cup\big\{\Sigma f :\big|: f\in W\big\}$
    \end{enumerate}
\end{proposition}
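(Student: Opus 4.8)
The plan is to show that the two generating sets span the same strongly saturated class by proving containment in both directions, using \cref{lem:spine} for one inclusion and a cofinality-style ``cell attachment'' argument for the other. Write $\overline{S}$ for the strongly saturated class generated by a set $S$.

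First I would show that the class \ref{it:W2} is contained in $\overline{W_1}$, where $W_1$ is the set in \ref{it:W1}. The spine inclusions $\Sp_U[\vec k]\hookrightarrow\Delta_U[\vec k]$ with $\dim[\vec k]\le r$ are pushouts of the generators in the first part of \ref{it:W1} (taking $X = \Delta_U[\vec k_{\ge1}]$ or iterating the decomposition $[k_0] = [1]\sqcup_{[0]}[k_0-1]$), so they lie in $\overline{W_1}$; more carefully, one writes $\Sp_U[\vec k]\hookrightarrow\Delta_U[\vec k]$ as a finite composite of pushouts of generators of type \ref{it:W1}, exactly as in the proof of \cref{lem:spine} read in reverse. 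For a general $[\vec k]\in\bSigma_{r+1}$ with $k_0 > 0$, the spine inclusion again decomposes this way. Next, $\Sigma f = \Delta[1,f]$ for $f\in W$ is precisely a generator of the second type in \ref{it:W1} (with $\ell = 1 > 0$), so it lies in $\overline{W_1}$ trivially. Hence $\overline{W_2}\subseteq\overline{W_1}$.

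For the reverse inclusion $\overline{W_1}\subseteq\overline{W_2}$, the generators $\Delta[\ell^1,X]\sqcup_{\Delta[0]}\Delta[\ell^2,X]\hookrightarrow\Delta[\ell,X]$ lie in $\overline{W_2}$ directly by \cref{lem:spine}, since that lemma asserts exactly that these lie in the strongly saturated class generated by the local spine inclusions, which form the first half of \ref{it:W2}. It remains to handle the generators $\Delta[\ell,f]$ for $f\in W$ and $\ell > 0$. The idea is to reduce $\ell$ to the case $\ell = 1$, i.e.\ to $\Sigma f$, using the spine decomposition: for $\ell > 0$ one has the pushout square relating $\Delta[\ell,f]$ to $\Sp[\ell,\cdot]$-style gluings of copies of $\Delta[1,f] = \Sigma f$ along $\Delta[0]$. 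Concretely, write $[\ell]$ as an iterated pushout $[1]\sqcup_{[0]}\cdots\sqcup_{[0]}[1]$ ($\ell$ copies); applying $\Delta[-,-]$ and using \cref{lem:spine} to pass between $\Delta[\ell,f]$ and the spine $\Delta[1,f]\sqcup_{\Delta[0]}\cdots\sqcup_{\Delta[0]}\Delta[1,f]$ on both source and target, one expresses $\Delta[\ell,f]$ as built from $\Sigma f$ and local spine inclusions via pushouts, composites, and 2-out-of-3. This shows $\Delta[\ell,f]\in\overline{W_2}$, completing the proof.

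The main obstacle I anticipate is the bookkeeping in the last step: one must simultaneously replace both $\Delta[\ell,X']$ for $X' = $ source and target of $f$ by their spines and check that the resulting square of maps is a pushout (or can be filled by pushouts of $\Sigma f$ along spine inclusions), and that 2-out-of-3 can genuinely be applied — i.e.\ that the comparison maps between $\Delta[\ell,-]$ and the spine on source and target are themselves in $\overline{W_2}$, which is where \cref{lem:spine} and the closure of $\overline{W_2}$ under pushout are used. A clean way to organise this is to observe that $\Delta[-,f]$ sends the pushout $[\ell] = [\ell^1]\sqcup_{[0]}[\ell^2]$ to a map of pushout squares, inducting on $\ell$ with base case $\ell = 1$; the inductive step glues a copy of $\Sigma f$ and invokes \cref{lem:spine} for the ambient identification, and strong saturation closes everything up.
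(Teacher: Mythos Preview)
Your proposal is correct and follows essentially the same route as the paper: the inclusion $\overline{W_2}\subseteq\overline{W_1}$ is immediate (spine inclusions decompose into iterated single-step generators of type \ref{it:W1}, and $\Sigma f=\Delta[1,f]$), while the reverse uses \cref{lem:spine} for the first family and a spine/2-out-of-3 argument for $\Delta[\ell,f]$. The paper organises the last step slightly more efficiently than your inductive plan: it introduces $\Sp[\ell,X]:=\Sigma X\sqcup_{\Delta[0]}\dots\sqcup_{\Delta[0]}\Sigma X$, notes that $\Sp[\ell,X]\hookrightarrow\Delta[\ell,X]$ lies in $\overline{W_2}$ and that $\Sp[\ell,f]$ is a colimit of copies of $\Sigma f$, and then reads off $\Delta[\ell,f]\in\overline{W_2}$ from a single commutative square by 2-out-of-3---this is exactly the ``clean way to organise'' the bookkeeping you anticipated, carried out in one shot rather than by induction on $\ell$.
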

\begin{proof}
    Let $W_1$ and $W_2$ denote the strongly saturated classes of morphisms generated by \ref{it:W1} and \ref{it:W2}, respectively, then clearly $W_2\subseteq W_1$.

    Conversely, any $\Delta[\ell^1,X]\sqcup_{\Delta[0]}\Delta[\ell^2, X]\hookrightarrow\Delta[\ell, X]$ lies in $W_2$ by \cref{lem:spine}.
    For a polysimplicial sheaf $X_\bullet$, let
    \[
        \Sp[\ell, X] := \underbrace{\Sigma X\sqcup_{\Delta[0]}\dots\sqcup_{\Delta[0]}\Sigma X}_\ell \hookrightarrow \Delta[\ell, X]
    \]
    then this inclusion also lies in $W_2$.

    For $f:X_\bullet\to X_\bullet'$ in $W$, the induced map $\Sp[\ell, X]\to\Sp[\ell, X']$ is a pushout of elements of $W_2$ and is thus an element of $W_2$ as well.
    From the commutative square
    $$
    \begin{tikzcd}
        \Sp[\ell, X] \ar[r, hook, "\in W_2"]\ar[d, "\in W_2"']
        & \Delta[\ell, X] \ar[d, "{\Delta[\ell, f]}"] \\
        \Sp[\ell, X'] \ar[r, hook, "\in W_2"']
        & \Delta[\ell, X']
    \end{tikzcd}
    $$
    it follows from 2-out-of-3 that $\Delta[\ell, f]\in W_2$, as desired.
\end{proof}

\subsection{Higher category objects in sheaves}
\label{ssec:hco}

\begin{definition}\label{def:loc-realisation}
    For $U\in\sC$, left Kan extension of $\Delta_U[-] : \bDelta \to \Fun(\bSigma_r^\op,\sX)$ for any $0\leq r\leq\infty$ along the Yoneda embedding $\bDelta\hookrightarrow\sP(\bDelta)$ induces a $U$-local realisation functor $|{-}|_U^\# : \sP(\bDelta) \to \Fun(\bSigma_r^\op,\sX)$.
    By definition, this realisation is left adjoint to the functor $\Fun(\bSigma_r^\op,\sX) \to \sP(\bDelta)$ sending $X_\bullet\mapsto X_\bullet(U)$.
\end{definition}

\begin{definition}\label{def:infty-sheaf}
    Let $\bI : \bDelta^\op\to\Set$ denote the nerve of the walking isomorphism.
    Explicitly, we have $\bI_n = \{\bot, \top\}^{\times(n+1)}$ for all $n\geq0$, where the face maps are given by projections, and the degeneracy maps are given by diagonal inclusions.
    On the other hand, let $\bS^m := \partial\Delta[m+1]$ denote the simplicial $m$-sphere for $m\geq-1$.
    
    For $-2\leq n\leq\infty$ and $0\leq r\leq n+2$, let $W_{(n, r)}$ denote the strongly saturated class of morphisms in $\Fun(\bSigma_\infty^\op, \sX)$ generated by:
    \begin{enumerate}[label={(W\arabic*)}]
        \item\label{W:segal}
            $\Sigma^m\Sp_U[\vec k]\hookrightarrow\Sigma^m\Delta_U[\vec k]$ for $U\in\sC$, $m\geq0$, and $[\vec k]\in\bSigma_\infty$,
        \item\label{W:complete}
            $\Sigma^m|\bI|_U^\# \to \Sigma^m\Delta_U[0]$ for $U\in\sC$, and $m\geq0$,
        \item\label{W:truncate}
            $\Sigma^m(\bS^{k-m}\times\Delta_U[0])\to\Sigma^m\Delta_U[0]$ for $U\in\sC$, and finite $m \geq r$ and $k > n$, and
        \item\label{W:trivial}
            $\Sigma^m\Delta_U[1] \to \Sigma^m\Delta_U[0]$ for $U\in\sC$, and finite $m \geq r$.
    \end{enumerate}
    Call a polysimplicial sheaf a \emph{sheaf of $(n, r)$-categories} on $\sC$ if it is $W_{(n, r)}$-local.
    Denote by $\Sh_{(n, r)}(\sC)$ the full subcategory of $\Fun(\bSigma_\infty^\op,\sX)$ spanned by the sheaves of $(n, r)$-categories.
\end{definition}
\begin{remark}\label{rem:model-structure}
    Following the proof of \cite[Proposition 1.5.4]{lurie:infty2}, suppose $\cM$ is a left proper combinatorial simplicial model category such that its underlying $\infty$-category is a topos $\sX$.
    For $-2\leq n\leq\infty$ and $0\leq r\leq n+2$, we can endow $\Fun(\bSigma_\infty^\op,\cM)$ with a simplicial model structure such that
    \begin{itemize}
        \item[(C)] A morphism $f:X_\bullet\to X_\bullet'$ is a cofibration precisely if the map $X_{\vec k}\to X'_{\vec k}$ is a cofibration in $\cM$ for every $[\vec k]\in\bSigma_\infty$,
        \item[(W)] pointwise weak equivalences are among the weak equivalences in $\Fun(\bSigma_\infty^\op,\cM)$, and
        \item[(F)] An object $X$ is fibrant if and only if it is injectively fibrant, and the induced map $N(\bSigma_\infty)^\op\to N(X)$ in the underlying $\infty$-category $\Fun(\bSigma_\infty^\op,\sX)$ is a sheaf of $(n, r)$-categories.
    \end{itemize}
    In fact, by \cref{lem:truncate} below, we can restrict to a Quillen equivalent model structure on the subcategory $\Fun(\bSigma_r^\op, \cM)$.
\end{remark}
\begin{remark}\label{rem:infty-sheaf}
    We can generalise \cref{def:loc-realisation} to any $X:\bSigma_r^\op\to\sX$ by taking the left Kan extension of $\Delta[-, X] : \bDelta \to \Fun(\bSigma_{r+1}^\op,\sX)$ along the Yoneda embedding $\bDelta\hookrightarrow\sP(\bDelta)$.
    This defines an action $(-)\odot X : \sP(\bDelta) \to \Fun(\bSigma_{r+1}^\op,\sX)$.
    
    By \cref{rem:poly-sheaf-gen}, we have for any simplicial space $S$ that locality with respect to $\Sigma^m|S|_U^\#\to\Sigma^m\Delta_U[0]$ for all $U\in\sC$ is equivalent to locality with respect to $\Sigma^m(S\odot X)\to\Sigma^mX$ for all polysimplicial sheaves $X$.    
    In particular, combined with \cref{prop:spine-redux}, we see that $W_{(n, r)}$ is independent of the choice of site $\sC$.
    Therefore, we may also denote the category of sheaves of $(n, r)$-categories on (a site of) $\sX$ by $\Sh_{(n, r)}(\sX)$ to avoid specifying a site of definition.
\end{remark}

Below, we provide a recursive characterisation of sheaves of $(n, r)$-categories.

\begin{definition}\label{def:uncurry}
    For $0\leq r\leq\infty$, we have a functor $\bDelta\times\bSigma_r\to\bSigma_{r+1}$ given by
    $$
    ([n], [\vec k]) \mapsto \begin{cases} [0], & n = 0 \\ [n, \vec k], & n > 0 \end{cases}
    $$
    induces a fully faithful inclusion $\Fun(\bSigma_{r+1}^\op, \sX) \hookrightarrow \Fun(\bDelta^\op, \Fun(\bSigma_r^\op, \sX))$.
\end{definition}

\begin{proposition}\label{prop:infty-sheaf}
    For $-2\leq n\leq\infty$ and $0\leq r\leq n+2$, the essential image of the fully faithful inclusion
    \[
        \Sh_{(n+1, r+1)}(\sC) \subseteq \Fun(\bSigma_\infty^\op, \sX) \hookrightarrow \Fun(\bDelta^\op, \Fun(\bSigma_\infty^\op, \sX))
    \]
    consists of those $F : \bDelta^\op \to \Fun(\bSigma_\infty^\op,\sX)$ such that
    \begin{enumerate}[label={(S\arabic*)}]
        \item\label{S:obj}
            $F_0 : \bSigma_\infty^\op\to\sX$ is essentially constant,
        \item\label{S:mor}
            $F_1$ is a sheaf of $(n, r)$-categories over $\sC$,
        \item\label{S:segal}
            $F$ satisfies the \emph{Segal condition}: for every $k\geq2$, the canonical map
            $$
            F_k \to \underbrace{F_1\times_{F_0}\dots\times_{F_0}F_1}_k
            $$
            is an equivalence in $\Fun(\bSigma_\infty^\op,\sX)$, and
        \item\label{S:complete}
            $F$ is \emph{Rezk-complete}: it is local with respect to $|\bI|_U^\#\to\Delta_U[0]$ for every $U\in\sC$.
    \end{enumerate}
\end{proposition}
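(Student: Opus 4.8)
The plan is to unwind the definition of $\Sh_{(n+1,r+1)}(\sC)$ as the category of $W_{(n+1,r+1)}$-local polysimplicial sheaves and carry it across the fully faithful functor $\iota\colon\Fun(\bSigma_\infty^\op,\sX)\hookrightarrow\Fun(\bDelta^\op,\Fun(\bSigma_\infty^\op,\sX))$ of \cref{def:uncurry}, taken with $r=\infty$, so that $\bSigma_{r+1}=\bSigma_r=\bSigma_\infty$ and $\iota$ is induced by the left adjoint $L\colon\bDelta\times\bSigma_\infty\to\bSigma_\infty$. The first step is to identify the essential image of $\iota$. Since $L$ collapses $(0,[\vec m])$ to $[0]$ and acts by $(\ell,[\vec m])\mapsto[\ell,\vec m]$ for $\ell>0$, for a polysimplicial sheaf $X$ one has $(\iota X)_\ell\simeq X_{\ell,\bullet}$ for $\ell\geq1$ and $(\iota X)_0$ essentially constant; moreover $\iota$ has a right adjoint $R$, namely restriction along $\bSigma_\infty\hookrightarrow\bDelta\times\bSigma_\infty$, and the counit $\iota R F\to F$ is an equivalence in all positive simplicial degrees while in degree $0$ it is the canonical comparison from the constant sheaf on the value of $F_0$ at $[0]$ to $F_0$. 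Hence $F$ lies in the essential image of $\iota$ exactly when $F_0$ is essentially constant, i.e.\ exactly when \ref{S:obj} holds. So for $F$ in this image we may write $F=\iota X$ with $F_\ell\simeq X_{\ell,\bullet}$ for $\ell\geq1$, and it remains to show that $X$ is $W_{(n+1,r+1)}$-local if and only if $F$ satisfies \ref{S:mor}, \ref{S:segal} and \ref{S:complete}.

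The crucial step is a rewriting of the generating set of $W_{(n+1,r+1)}$. Using $\Sigma(-)=\Delta[1,-]$ and $\Sigma^m\Delta_U[\vec k]=\Sigma(\Sigma^{m-1}\Delta_U[\vec k])$, together with the analogous identities for local spines, for simplicial spheres, and for $|\bI|_U^\#$, the generators \ref{W:segal}--\ref{W:trivial} for $W_{(n+1,r+1)}$ split into the ``level-zero'' maps $\Sp_U[\vec k]\hookrightarrow\Delta_U[\vec k]$ and $|\bI|_U^\#\to\Delta_U[0]$, together with the maps $\Sigma f$ where $f$ runs over the generating set of $W_{(n,r)}$; here the index shift $m\geq r+1\leftrightarrow m-1\geq r$ matches \ref{W:truncate}--\ref{W:trivial} for $(n,r)$ and $m\geq0\leftrightarrow m-1\geq0$ matches \ref{W:segal}--\ref{W:complete}. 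Applying \cref{prop:spine-redux} over $\bSigma_\infty$ with $W$ the generating set of $W_{(n,r)}$, the subfamily $\{\Sp_U[\vec k]\hookrightarrow\Delta_U[\vec k]\}\cup\{\Sigma f: f\in W\}$ generates the same strongly saturated class as $\{\Delta[\ell^1,X']\sqcup_{\Delta[0]}\Delta[\ell^2,X']\hookrightarrow\Delta[\ell,X']\}\cup\{\Delta[\ell,f]: f\in W,\ \ell>0\}$. Therefore $W_{(n+1,r+1)}$ is the strongly saturated class generated by the Segal maps, the maps $\Delta[\ell,f]$ with $f$ a generator of $W_{(n,r)}$ and $\ell>0$, and the maps $|\bI|_U^\#\to\Delta_U[0]$.

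Next I would read off $W_{(n+1,r+1)}$-locality of $X$ (equivalently of $F$) against these three families, using the adjunction $\Map(\Delta[\ell,X'],X)\simeq\Map(X',X_{\ell,\bullet})\simeq\Map(X',F_\ell)$ for $\ell\geq1$ (\cref{def:poly-sheaf}). Locality against $\{\Delta[\ell,f]: f\in W,\ \ell>0\}$ says precisely that $F_\ell$ is a sheaf of $(n,r)$-categories for every $\ell\geq1$. Testing the Segal maps against the generating local polysimplices (\cref{rem:poly-sheaf-gen}) and unwinding the pushout in the source gives the equivalences $F_\ell\xrightarrow{\ \sim\ }F_{\ell^1}\times_{F_0}F_{\ell^2}$ for every decomposition $[\ell]=[\ell^1]\sqcup_{[0]}[\ell^2]$, which by induction from $[k]=[1]\sqcup_{[0]}[k-1]$ is the Segal condition \ref{S:segal}. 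Finally, locality against $\{|\bI|_U^\#\to\Delta_U[0]\}$ is, across $\iota$, exactly the Rezk-completeness condition \ref{S:complete} of the statement. Combining, $X$ is $W_{(n+1,r+1)}$-local if and only if $F$ satisfies \ref{S:segal}, \ref{S:complete} and $F_\ell\in\Sh_{(n,r)}(\sC)$ for all $\ell\geq1$.

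It then remains to check that, granting \ref{S:segal}, the condition ``$F_\ell\in\Sh_{(n,r)}(\sC)$ for all $\ell\geq1$'' is equivalent to \ref{S:mor}. One implication is the case $\ell=1$. For the converse: since $F$ is a simplicial object, $F_0$ is a retract of $F_1$ in $\Fun(\bSigma_\infty^\op,\sX)$ (via the degeneracy $s_0$ and a face map $d_i$), and as $\Sh_{(n,r)}(\sC)$ is a reflective localisation it is closed under retracts, so \ref{S:mor} forces $F_0\in\Sh_{(n,r)}(\sC)$; then, $\Sh_{(n,r)}(\sC)$ being closed under limits, \ref{S:segal} yields $F_\ell\simeq F_1\times_{F_0}\dots\times_{F_0}F_1\in\Sh_{(n,r)}(\sC)$ for all $\ell\geq1$. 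This completes the proof. The bulk of the work is the generator manipulation of the second paragraph (and the careful matching of index ranges, including the edge cases), together with the mapping-space bookkeeping of the third; the step that is perhaps least obvious in advance is the retract argument, which is what lets \ref{S:mor} --- a hypothesis only on $F_1$ --- control all of the $F_\ell$ and, in particular, force the object-sheaf $F_0$ to be a sheaf of $(n,r)$-categories itself.
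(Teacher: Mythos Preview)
Your proof is correct and follows the same overall strategy as the paper: identify the essential image of $\iota$ with \ref{S:obj}, split the generators of $W_{(n+1,r+1)}$ into the level-zero maps and the suspensions of the generators of $W_{(n,r)}$, and then read off the locality conditions as \ref{S:mor}--\ref{S:complete}. The one substantive difference lies in how you handle the suspended part. The paper simply observes, via the $\Sigma\dashv\Omega$ adjunction of \cref{def:poly-sheaf}, that $X$ is local with respect to $\Sigma(W_{(n,r)})$ if and only if $\Omega X=F_1$ is $W_{(n,r)}$-local, which is \ref{S:mor} directly; the remaining $m=0$ generators then give \ref{S:segal} and \ref{S:complete}. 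You instead invoke \cref{prop:spine-redux} to rewrite $\{\Sp_U[\vec k]\hookrightarrow\Delta_U[\vec k]\}\cup\{\Sigma f\}$ in the form $\{\text{Segal maps}\}\cup\{\Delta[\ell,f]:\ell>0\}$, obtaining the a priori stronger condition that every $F_\ell$ with $\ell\geq1$ lies in $\Sh_{(n,r)}(\sC)$, and then use the retract argument to reduce this back to \ref{S:mor}. This is a harmless detour---both \cref{prop:spine-redux} and the retract step become unnecessary once one uses the suspension--desuspension adjunction from the outset---but your route is equally valid and makes more explicit use of the machinery set up in \cref{ssec:poly}.
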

\begin{remark}\label{rem:infty-sheaf-recurse}
    If $F$ satisfies \ref{S:obj}, \ref{S:mor}, and \ref{S:segal} and $n=\infty$, then $F_k$ is a sheaf of $(\infty, r)$-categories over $\sC$ for every $k\geq0$.
    In particular, we can characterise $\Sh_{(\infty, r+1)}(\sC)$ as the full subcategory of $\Fun(\bDelta^\op,\Sh_{(\infty, r)}(\sC))$ spanned by those $F: \bDelta^\op \to \Sh_{(\infty, r)}(\sC)$ such that
    \begin{itemize}
        \item $F_0$ is essentially constant,
        \item $F$ satisfies the Segal condition, and
        \item $F$ is Rezk-complete.
    \end{itemize}
\end{remark}
\begin{proof}[Proof of \cref{prop:infty-sheaf}.]
    A functor $F : \bDelta^\op\times\bSigma_\infty^\op\to\sX$ factors through the left adjoint $\bDelta\times\bSigma_\infty\to\bSigma_\infty$ of \cref{def:uncurry} if and only if $F$ satisfies \ref{S:obj}.
    Assuming $F$ satisfies \ref{S:obj}, we therefore tacitly identify $F$ with its underlying functor $F : \bSigma_\infty^\op\to\sX$.
    
    By definition, \ref{S:mor} is equivalent to asserting that $F$ is local with respect to $\Sigma(W_{(n, r)})$; that is, $F$ is local with respect to:
    \begin{enumerate}[label={(W\arabic*')}]
        \item
            $\Sigma^{m+1}\Sp_U[\vec k]\hookrightarrow\Sigma^{m+1}\Delta_U[\vec k]$ for $U\in\sC$, $m\geq0$, and $[\vec k]\in\bSigma_\infty$,
        \item
            $\Sigma^{m+1}|\bI|_U^\# \to \Sigma^{m+1}\Delta_U[0]$ for $U\in\sC$, and $m\geq0$,
        \item
            $\Sigma^{m+1}(\bS^{k-m}\times\Delta_U[0])\to\Sigma^{m+1}\Delta_U[0]$ for $U\in\sC$, and finite $m \geq r$ and $k > n$, and
        \item
            $\Sigma^{m+1}\Delta_U[1] \to \Sigma^{m+1}\Delta_U[0]$ for $U\in\sC$, and finite $m \geq r$.
    \end{enumerate}
    all of which are a subset of the generators for $W_{(n+1, r+1)}$.
    In particular, $F$ is the image of a sheaf of $(n+1, r+1)$-categories if and only if $F$ satisfies \ref{S:obj} and \ref{S:mor}, and is moreover local with respect to
    \begin{itemize}
        \item $\Sp_U[\vec k]\hookrightarrow\Delta_U[\vec k]$ for $[\vec k]\in\bSigma_r$ and $U\in\sC$, and
        \item $|\bI|_U^\#\to\Delta_U[0]$ for $U\in\sC$.
    \end{itemize}
    Locality with respect to $|\bI|_U^\#\to\Delta_U[0]$ is precisely condition \ref{S:complete}.
    It remains to show that \ref{S:segal} is equivalent to locality with respect to the remaining spine inclusions.
    
    The Segal condition \ref{S:segal} is equivalent to locality with respect to $\Sp[k, X]\hookrightarrow\Delta[k, X]$ for every $k\geq2$ and all polysimplicial sheaves $X$.
    By \cref{rem:poly-sheaf-gen}, it suffices to assert locality with respect to $\Sp_U[\vec k] \hookrightarrow\Delta_U[\vec k]$ for $U\in\sC$ and all $[\vec k]\in\bSigma_\infty$.
    Assuming \ref{S:mor}, locality with respect to $\Sigma^{m+1}\Delta_U[1]\to\Sigma^{m+1}\Delta_U[0]$ for all finite $m\geq r$ implies that the Segal condition is equivalent to locality with respect to $\Sp_U[\vec k]\hookrightarrow\Delta_U[\vec k]$ for $U\in\sC$ and $[\vec k]\in\bSigma_r$, as desired.
\end{proof}

\begin{lemma}\label{lem:truncate}
    For $0\leq r < \infty$, the category $\Sh_{(\infty, r)}(\sC)$ is precisely the full subcategory of $\Sh_{(\infty, \infty)}(\sC)$ spanned by those $X:\bSigma_\infty^\op\to\sX$ that factor through $[-]_{<r} : \bSigma_\infty^\op\to\bSigma_r^\op$.
\end{lemma}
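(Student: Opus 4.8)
The plan is to reduce the statement to a concrete constancy condition by comparing the generating sets of $W_{(\infty, r)}$ and $W_{(\infty, \infty)}$. For $n = \infty$ the generators \ref{W:truncate} are vacuous, and \ref{W:segal} and \ref{W:complete} appear verbatim in both lists, so the only generators of $W_{(\infty, r)}$ not already generating $W_{(\infty,\infty)}$ are the maps $\Sigma^m\Delta_U[1]\to\Sigma^m\Delta_U[0]$ of type \ref{W:trivial}, for $U\in\sC$ and finite $m\geq r$. Hence $\Sh_{(\infty, r)}(\sC)\subseteq\Sh_{(\infty,\infty)}(\sC)$, and a $W_{(\infty,\infty)}$-local sheaf $X$ lies in $\Sh_{(\infty, r)}(\sC)$ if and only if it is in addition local with respect to those maps. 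By the adjunctions of \cref{def:poly-sheaf} one has $\Map(\Sigma^m\Delta_U[\vec k], X)\simeq X_{(\underbrace{1,\dots,1}_m,\vec k)}(U)$, and since the objects $h_U^\#$ ($U\in\sC$) jointly detect equivalences in $\sX$, for $W_{(\infty,\infty)}$-local $X$ this added condition is equivalent to the assertion that for every finite $m\geq r$ the degeneracy $X_{(\underbrace{1,\dots,1}_m,0,\dots)}\to X_{(\underbrace{1,\dots,1}_m,1,0,\dots)}$ in coordinate $m$ is an equivalence in $\sX$.

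The inclusion $\Sh_{(\infty,\infty)}(\sC)\cap\{X : X\text{ factors through }[-]_{<r}\}\subseteq\Sh_{(\infty, r)}(\sC)$ is then immediate: if $X$ factors through $[-]_{<r}$ and $m\geq r$, the degeneracy in coordinate $m$ restricts under $[-]_{<r}$ to the identity of $(\underbrace{1,\dots,1}_r,0,\dots)$ (it alters only coordinate $m\geq r$), so $X$ carries it to an equivalence; hence $X$ is local with respect to the added generators.

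For the reverse inclusion I would induct on $r$. In the base case $r=0$ I must show directly that $W_{(\infty,0)}$-locality forces $X$ to be essentially constant, which is precisely ``$X$ factors through $[-]_{<0}:\bSigma_\infty^\op\to\bSigma_0^\op\cong *$''. The $m=0$ instance of \ref{W:trivial} makes the degeneracy $X_0\to X_{(1,0,\dots)}$ an equivalence, and together with the Segal conditions of \ref{W:segal} this makes every face and degeneracy of $X$ in the zeroth coordinate an equivalence; running the same argument after applying each suspension $\Sigma^m$ (using \ref{W:trivial} for all finite $m\geq 0$, and \cref{prop:spine-redux} to organise the suspended Segal conditions) and inducting on dimension then shows $X_{\vec k}\to X_0$ is an equivalence for every $[\vec k]\in\bSigma_\infty$, i.e.\ $X\simeq\const_{X_0}$. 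For the inductive step, assume the claim for $r$ and take $X\in\Sh_{(\infty, r+1)}(\sC)$. Under the fully faithful functor $\Fun(\bSigma_\infty^\op,\sX)\hookrightarrow\Fun(\bDelta^\op,\Fun(\bSigma_\infty^\op,\sX))$ appearing in \cref{prop:infty-sheaf} (cf.\ \cref{def:uncurry}), $X$ corresponds to some $F:\bDelta^\op\to\Fun(\bSigma_\infty^\op,\sX)$ with $F_k(\vec\ell)=X_{(k,\vec\ell)}$ for $k\geq1$ and $F_0$ constant; by \cref{prop:infty-sheaf} together with \cref{rem:infty-sheaf-recurse} each $F_k$ ($k\geq0$) is a sheaf of $(\infty, r)$-categories on $\sC$, hence by the inductive hypothesis factors through $[-]_{<r}$. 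Since $(k,\vec\ell)_{<r+1}=(k,\vec\ell_{<r})$, it follows that $X_{(k,\vec\ell)}\to X_{(k,\vec\ell)_{<r+1}}$ is an equivalence for every $(k,\vec\ell)\in\bSigma_\infty$, i.e.\ $X$ factors through $[-]_{<r+1}$, completing the induction.

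The main obstacle is the base case $r=0$: unlike the inductive step, which is essentially formal given the recursive description of \cref{prop:infty-sheaf}, it has to be argued directly from the generators of $W_{(\infty,0)}$, with care in propagating the degeneracy conditions through every suspension level and through the suspended Segal conditions. A secondary point to check is that in the inductive step the identifications $X_{(k,\vec\ell)}\simeq X_{(k,\vec\ell_{<r})}$ supplied by the inductive hypothesis are the components of the counit of $\iota\dashv[-]_{<r+1}$, so that they genuinely exhibit $X$ as factoring through $[-]_{<r+1}$; this is verified coordinatewise.
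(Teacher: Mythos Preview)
Your proposal is correct and follows essentially the same approach as the paper: both reduce to the base case $r=0$ (showing $W_{(\infty,0)}$-locality forces $X$ to be essentially constant, using \cref{prop:spine-redux} to propagate the \ref{W:trivial} collapses through all polysimplices), and then induct on $r$ via \cref{prop:infty-sheaf}. The paper's base case is stated more directly---it shows by induction with \cref{prop:spine-redux} that $W_{(\infty,0)}$ already contains every $\Delta_U[\vec k,1]\to\Delta_U[\vec k]$ and hence every $\Delta_U[\vec k,\ell]\to\Delta_U[\vec k]$---which is exactly the precise form of the ``propagation through suspension levels'' you flag as the main obstacle.
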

\begin{proof}
    We prove the case where $r = 0$; the rest follow by induction with \cref{prop:infty-sheaf}.
    When $r = 0$, we are showing that $X : \bSigma_\infty^\op\to\sX$ is a sheaf of $(\infty, 0)$-categories precisely if it is essentially constant.
    
    Note that $W_{(\infty, 0)}$ is generated by $\Sigma^m\Sp_U[\vec k]\hookrightarrow\Sigma^m\Delta_U[\vec k]$ and $\Sigma^m\Delta_U[1]\to\Sigma^m\Delta_U[0]$ for $U\in\sC$, $m\geq0$, and $[\vec k]\in\bSigma_\infty$.
    By induction with \cref{prop:spine-redux}, $W_{(\infty, 0)}$ therefore contains the morphisms $\Delta_U[\vec k, 1]\to\Delta_U[\vec k]$ for all $[\vec k]\in\bSigma_\infty$, as well as the morphisms
    $$
    \underbrace{\Delta_U[\vec k, 1]\sqcup_{\Delta_U[\vec k]}\dots\sqcup_{\Delta_U[\vec k]}\Delta_U[\vec k, 1]}_\ell \simeq \Delta[\vec k,\Sp_U[\ell]] \to \Delta_U[\vec k, \ell]
    $$
    for all $[\vec k]\in\bSigma_\infty$ and $\ell\geq0$.
    This implies that $W_{(\infty, 0)}$ contains all maps of the form $\Delta_U[\vec k,\ell]\to\Delta_U[\vec k]$, implying that $X$ is $W_{(\infty, 0)}$-local if and only if $X$ is essentially constant, as desired.
\end{proof}

\begin{proposition}\label{prop:kappa}
    For $-2\leq n\leq\infty$ and $0\leq r < n+2$, the fully faithful inclusion $\Sh_{(n, r)}(\sC)\subseteq\Sh_{(n, r+1)}(\sC)$ admits a right adjoint $\kappa_{\leq r}$.
\end{proposition}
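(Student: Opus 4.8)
The plan is to argue by induction on $r$, building $\kappa_{\leq r}$ from the recursive description of \cref{prop:infty-sheaf}; the base case $r=0$ is handled by a direct ``core'' construction and the inductive step $r\geq1$ by applying the previously-constructed right adjoint to the morphism object of a complete Segal object.

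\textbf{Base case $r=0$.} Here the inclusion is $\Sh_{(n,0)}(\sC)\hookrightarrow\Sh_{(n,1)}(\sC)$, and by \cref{prop:infty-sheaf} (with morphism object lying in $\Sh_{(n-1,0)}(\sC)$, which by \cref{lem:truncate} consists of essentially constant sheaves) this is the inclusion of the essentially constant objects into the Rezk-complete Segal objects $X_\bullet$ in $\sX$ with $X_0$ essentially constant. I would set $\kappa_{\leq0}X_\bullet := X_0$ (the constant polysimplicial sheaf on the object-sheaf), and verify the adjunction $\Map_{\Sh_{(n,1)}}(\mathrm{const}\,A, X_\bullet)\simeq\Map_\sX(A, X_0)$ as follows: a map out of a constant object is determined levelwise, each level-$k$ component sends the (entirely degenerate) $k$-simplices of $\mathrm{const}\,A$ into the sheaf of equivalences of $X_\bullet$, and Rezk-completeness identifies that sheaf of equivalences with $X_0$ via $s_0$; so the whole map is determined uniquely by $A\to X_0$. (Alternatively, since all the categories are presentable, it suffices to check $\Sh_{(n,0)}(\sC)$ is closed under small colimits in $\Sh_{(n,1)}(\sC)$ and apply the adjoint functor theorem; using the computation of colimits of complete Segal objects this reduces to $\sX$ being closed under colimits inside $\CSS_\sX(\sX)$.)

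\textbf{Inductive step $r\geq1$.} By \cref{prop:infty-sheaf} I view $\Sh_{(n,r+1)}(\sC)$ and $\Sh_{(n,r)}(\sC)$ as the full subcategories of $\Fun(\bDelta^\op,\Fun(\bSigma_\infty^\op,\sX))$ on the Rezk-complete Segal objects $F$ with $F_0$ essentially constant and with $F_1\in\Sh_{(n-1,r)}(\sC)$, respectively $F_1\in\Sh_{(n-1,r-1)}(\sC)$; the inclusion is ``restrict the morphism object along $\Sh_{(n-1,r-1)}(\sC)\subseteq\Sh_{(n-1,r)}(\sC)$''. Granting the inductive right adjoint $\kappa_{\leq r-1}\colon\Sh_{(n-1,r)}(\sC)\to\Sh_{(n-1,r-1)}(\sC)$, I would define $\kappa_{\leq r}F$ to be the Segal object with $(\kappa_{\leq r}F)_0:=F_0$ and $(\kappa_{\leq r}F)_1:=\kappa_{\leq r-1}F_1$, with face, composition and degeneracy maps induced from those of $F$ through the counit $\iota\kappa_{\leq r-1}F_1\to F_1$ (composition lands in $\kappa_{\leq r-1}F_1$ since composites of invertible top morphisms are invertible; the degeneracy $s_0\colon F_0\to\kappa_{\leq r-1}F_1$ exists because identities are invertible, which after using Rezk-completeness of $F$ to present the degenerate part of $F_1$ as its sheaf of equivalences lets one factor $s_0\colon F_0\to F_1$ through the counit). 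Because $\kappa_{\leq r-1}$ is a right adjoint it preserves limits, so $\kappa_{\leq r}F$ stays Segal with $(\kappa_{\leq r}F)_0$ essentially constant and $(\kappa_{\leq r}F)_1\in\Sh_{(n-1,r-1)}(\sC)$; one then checks Rezk-completeness is inherited (passing to the core of $F_1$ leaves its sheaf of equivalences unchanged), so $\kappa_{\leq r}F\in\Sh_{(n,r)}(\sC)$ by \cref{prop:infty-sheaf}. Finally, for $G\in\Sh_{(n,r)}(\sC)$ one gets $\Map_{\Sh_{(n,r)}}(G,\kappa_{\leq r}F)\simeq\Map_{\Sh_{(n,r+1)}}(G,F)$ by expressing a map of Rezk-complete Segal objects through its effect on object- and morphism-parts and applying $\kappa_{\leq r-1}\dashv\iota$ to the morphism-part, using $G_1\in\Sh_{(n-1,r-1)}(\sC)$.

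I expect the main obstacle to be in the inductive step, specifically in confirming that the combinatorial data genuinely assembles into an object of $\Sh_{(n,r)}(\sC)$ and that the mapping-space comparison is an equivalence rather than merely a retraction. The delicate points are: (i) well-definedness of the degeneracy maps, which requires locating the image of $s_0$ inside the ``core'' and so genuinely uses Rezk-completeness of $F$; (ii) Rezk-completeness of $\kappa_{\leq r}F$ itself; and (iii) that, for finite $n$, the object-sheaf $F_0$ is a sheaf of $n$-groupoids and hence is \emph{not} an object of $\Sh_{(n-1,r-1)}(\sC)$, so the adjunction must be checked at the level of complete Segal objects rather than componentwise — forcing a careful analysis of how maps of complete Segal objects decompose via their object- and morphism-parts.
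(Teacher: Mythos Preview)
Your inductive approach via \cref{prop:infty-sheaf} is conceptually reasonable but takes a much harder route than the paper, and the obstacles you list at the end are real and unresolved.

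The paper's proof is essentially a one-liner once \cref{lem:truncate} is available. For $n=\infty$, that lemma identifies $\Sh_{(\infty,r)}(\sC)$ with the full subcategory of $\Fun(\bSigma_\infty^\op,\sX)$ on functors factoring through $[-]_{<r}\colon\bSigma_\infty^\op\to\bSigma_r^\op$. The inclusion $\Fun(\bSigma_r^\op,\sX)\hookrightarrow\Fun(\bSigma_{r+1}^\op,\sX)$ (left Kan extension, i.e.\ precomposition with $[-]_{<r}$) has restriction along $\bSigma_r\subseteq\bSigma_{r+1}$ as its right adjoint, and by \cref{lem:truncate} this restriction carries $\Sh_{(\infty,r+1)}(\sC)$ into $\Sh_{(\infty,r)}(\sC)$. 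The finite-$n$ case then follows because restriction manifestly preserves the additional truncation locality conditions \ref{W:truncate}. No induction, no Segal-object combinatorics, no worry about degeneracies.

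By contrast, your construction tries to rebuild this restriction functor one simplicial level at a time, and the difficulties you flag in (i)--(iii) are precisely the price of that choice. Point (iii) is the most serious: for finite $n$ the object sheaf $F_0$ is an $n$-truncated object of $\sX$, which need not lie in $\Sh_{(n-1,r-1)}(\sC)$, so the adjunction $\iota\dashv\kappa_{\leq r-1}$ gives you no direct handle on maps $F_0\to\kappa_{\leq r-1}F_1$. You would need to argue at the level of $\Sh_{(\infty,r-1)}(\sC)$ instead, or else prove separately that the degeneracy factors---which amounts to reproving, in disguise, that restriction to $\bSigma_r$ works. Your approach is essentially reconstructing the later functoriality result \cref{prop:CSSfun} by hand in a special case; the paper instead exploits the concrete polysimplicial model to sidestep all of this.
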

\begin{proof}
    First suppose $n = \infty$.
    The right adjoint to the inclusion $\Fun(\bSigma_r^\op, \sX) \to \Fun(\bSigma_{r+1}^\op, \sX)$ is given by restriction to $\bSigma_r\subseteq\bSigma_{r+1}$ (see \cref{rem:r-poly-sheaf}).
    By \cref{lem:truncate}, restriction descends to a right adjoint $\kappa_{\leq r}:\Sh_{(\infty, r+1)}(\sC) \to \Sh_{(\infty, r)}(\sC)$.
    The general case follows from observing that the above right adjoint descends further to a right adjoint $\Sh_{(n, r+1)}(\sC) \to \Sh_{(n, r)}(\sC)$ for $n < \infty$.
\end{proof}
\begin{remark}\label{rem:pi}
    On the other hand, it follows by definition that every inclusion $\Sh_{(n, r)}(\sC) \subseteq\Sh_{(n', r')}(\sC)$ with $n\leq n'$ and $r\leq r'$ admits a left adjoint $\pi_{\leq r}$.
\end{remark}

\subsection{Sheaves of \texorpdfstring{$(\infty, \infty)$}{(oo, oo)}-categories}
\label{ssec:sh-infty}

\begin{proposition}\label{prop:kappa-limit}
    The category $\Sh_{(\infty, \infty)}(\sC)$ is equivalent to the limit
    $$
    \varlim\left(\dots\to\Sh_{(\infty, 2)}(\sC) \xrightarrow{\kappa_{\leq1}} \Sh_{(\infty, 1)}(\sC) \xrightarrow{\kappa_{\leq0}} \Sh_{(\infty, 0)}(\sC)\right)
    $$
    in $\hat{\Cat_\infty}$.
\end{proposition}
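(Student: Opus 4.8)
The plan is to place $\Sh_{(\infty,\infty)}(\sC)$ inside the equivalence
$\Fun(\bSigma_\infty^\op,\sX)\simeq\varlim_r\Fun(\bSigma_r^\op,\sX)$ in $\hat{\Cat_\infty}$ — valid because $\bSigma_\infty$ is the filtered union of its full subcategories $\bSigma_r$, with structure functors the restrictions along $\bSigma_r\subseteq\bSigma_\infty$ — and to identify it with the full subcategory $\varlim_r\Sh_{(\infty,r)}(\sC)$. Write $R_r$ for restriction to $\bSigma_r$ and $\iota_r$ for the inflation functor of \cref{rem:r-poly-sheaf}. Both adjunctions $\iota_r\dashv R_r$ and $R_r\dashv\iota_r$ hold (this is what the description of $\iota_r$ in \cref{rem:r-poly-sheaf} and the adjunction of \cref{def:trunc} amount to), $\iota_r$ is fully faithful onto the $r$-polysimplicial sheaves, and $T_r:=\iota_rR_r$ is the idempotent ``truncate to dimension ${<}\,r$'' endofunctor, with $(T_rX)_{\vec k}\simeq X_{\vec k}$ whenever $\dim[\vec k]<r$, so that $X\simeq\varcolim_rT_rX$ pointwise. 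By \cref{prop:kappa} the functor $\kappa_{\leq r}$ is exactly $R_r$, and by \cref{lem:truncate} the inclusion $\Sh_{(\infty,r)}(\sC)\subseteq\Fun(\bSigma_\infty^\op,\sX)$ identifies $\Sh_{(\infty,r)}(\sC)$ with a reflective subcategory of $\Fun(\bSigma_r^\op,\sX)$, compatibly with restriction. Hence $\varlim_r\Sh_{(\infty,r)}(\sC)$ is the full subcategory of $\Fun(\bSigma_\infty^\op,\sX)$ spanned by the $X$ with $T_rX\in\Sh_{(\infty,r)}(\sC)$ for all $r$, and the proposition reduces to the claim that a polysimplicial sheaf $X$ is $W_{(\infty,\infty)}$-local if and only if $T_rX\in\Sh_{(\infty,r)}(\sC)$ for every $r\geq0$.

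For the ``only if'' direction I would induct on $r$ using the recursive description of \cref{prop:infty-sheaf}, in the form of \cref{rem:infty-sheaf-recurse}, with $n=\infty$. The case $r=0$ is clear, since $\Sh_{(\infty,0)}(\sC)$ consists of the essentially constant polysimplicial sheaves (proof of \cref{lem:truncate}) and $T_0X$ is constant. For the inductive step, present $X\in\Sh_{(\infty,\infty)}(\sC)$, via the uncurrying of \cref{def:uncurry}, as a functor $F\colon\bDelta^\op\to\Sh_{(\infty,\infty)}(\sC)$ with $F_0$ essentially constant, satisfying the Segal condition, and Rezk-complete. Restricting along $\bSigma_{r+1}\subseteq\bSigma_\infty$ produces the functor $k\mapsto R_rF_k\colon\bDelta^\op\to\Fun(\bSigma_r^\op,\sX)$; by the inductive hypothesis each $R_rF_k$ lies in $\Sh_{(\infty,r)}(\sC)$, the degree-zero term is still essentially constant, the Segal condition is preserved because $R_r$ preserves the relevant finite limits, and Rezk-completeness is preserved because $\iota_r$ carries the completeness maps $|\bI|_U^\#\to\Delta_U[0]$ over $\bSigma_r$ to the corresponding maps over $\bSigma_\infty$ (both being pointwise colimits of the $\Delta_U[n]$). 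So this restricted functor satisfies the recursive description of $\Sh_{(\infty,r+1)}(\sC)$, i.e.\ $T_{r+1}X\in\Sh_{(\infty,r+1)}(\sC)$.

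For the ``if'' direction I would filter the source of each mapping space. Since $B\simeq\varcolim_rT_rB$ for every polysimplicial sheaf $B$, and $\Map(T_rB,X)\simeq\Map_{\Fun(\bSigma_r^\op,\sX)}(R_rB,R_rX)$ by the adjunction $\iota_r\dashv R_r$, we obtain $\Map(w,X)\simeq\varlim_r\Map(R_rw,R_rX)$ for any morphism $w$ of polysimplicial sheaves. Now take $w$ to be a generator of $W_{(\infty,\infty)}$; by definition this is one of the spine inclusions \ref{W:segal} or one of the completeness maps \ref{W:complete}, the conditions \ref{W:truncate} and \ref{W:trivial} being vacuous when $n=r=\infty$, and in particular $w$ is also one of the generators of $W_{(\infty,r)}$ for every $r$. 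Using the other adjunction $R_r\dashv\iota_r$ we rewrite $\Map(R_rw,R_rX)\simeq\Map(w,\iota_rR_rX)=\Map(w,T_rX)$, which is an equivalence because $T_rX\in\Sh_{(\infty,r)}(\sC)$ is $W_{(\infty,r)}$-local. Passing to the limit over $r$ shows $\Map(w,X)$ is an equivalence, so $X$ is $W_{(\infty,\infty)}$-local, completing the reduction.

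I expect the main obstacle to be the ``only if'' direction: locality of $X$ against a generator of $W_{(\infty,\infty)}$ does not, one generator at a time, force the analogous statement for $T_rX$ — that reasoning turns out to be circular — so the recursive characterisation of \cref{prop:infty-sheaf} must be invoked, and one has to keep careful track of how the uncurrying of \cref{def:uncurry} and the two adjunctions relating $R_r$ and $\iota_r$ interact with suspension, the Segal pullbacks, and the completeness maps.
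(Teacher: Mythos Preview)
Your overall strategy matches the paper's exactly: embed both sides into the equivalence $\Fun(\bSigma_\infty^\op,\sX)\simeq\varlim_r\Fun(\bSigma_r^\op,\sX)$ and compare the full subcategories. The paper's proof is a one-liner that asserts the final identification; yours spells out the two inclusions, which is fine, and your ``only if'' direction via induction through \cref{prop:infty-sheaf} is a reasonable way to unpack what the paper leaves implicit.

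There is, however, a genuine error. You assert that \emph{both} adjunctions $\iota_r\dashv R_r$ and $R_r\dashv\iota_r$ hold. Only the first does. The truncation $[-]_{<r}:\bSigma_\infty\to\bSigma_r$ is a \emph{left} adjoint to the inclusion (for $[\vec\ell]\in\bSigma_r$ one has $\Hom_{\bSigma_r}([\vec k]_{<r},[\vec\ell])\cong\Hom_{\bSigma_\infty}([\vec k],[\vec\ell])$ because the excess coordinates map uniquely to $[0]$; the other direction fails already for $[\vec\ell]=[0]$ and $[\vec k]=(2,3)$). The phrasing in \cref{def:trunc} that you cite appears to be a slip in the paper---note that the proof of \cref{prop:kappa} uses, consistently with this, that \emph{restriction} is right adjoint to the inflation, i.e.\ $\iota_r\dashv R_r$. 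Consequently the inclusion $\bSigma_r\hookrightarrow\bSigma_\infty$ has no right adjoint, and $\iota_r$ is not the right Kan extension along it, so your step $\Map(R_rw,R_rX)\simeq\Map(w,T_rX)$ via ``the other adjunction'' is unjustified.

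The repair is easy and in fact simplifies your ``if'' direction. Each generator $w$ of $W_{(\infty,\infty)}$ already lies in the essential image of $\iota_{r_0}$ for a specific finite $r_0$. Using $\iota_{r_0}\dashv R_{r_0}$ and full faithfulness of $\iota_{r_0}$ you get directly
\[
\Map(w,X)\;\simeq\;\Map(R_{r_0}w,R_{r_0}X)\;\simeq\;\Map(\iota_{r_0}R_{r_0}w,\iota_{r_0}R_{r_0}X)\;=\;\Map(w,T_{r_0}X),
\]
the last identification because $T_{r_0}w=w$. Since $T_{r_0}X\in\Sh_{(\infty,r_0)}(\sC)$ by hypothesis and $w\in W_{(\infty,r_0)}$, this is an equivalence. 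No colimit presentation of $w$, no tower of mapping spaces, and no second adjunction are needed.
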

\begin{proof}
    Since $\bSigma_\infty \simeq \varcolim_r\bSigma_r$, we have an equivalence $\Fun(\bSigma_\infty^\op,\sX) \simeq \varlim_r\Fun(\bSigma_r^\op, \sX)$.
    This allows us to identify the limit $\varlim_r\Sh_{(\infty, r)}(\sC)$ of right adjoints with the full subcategory of $\Fun(\bSigma_\infty^\op, \sX)$ on those $X:\bSigma_\infty^\op\to\sX$ that restrict on $\bSigma_r$ to a sheaf of $(\infty, r)$-categories for every $0\leq r<\infty$, but these are precisely the sheaves of $(\infty, \infty)$-categories.
\end{proof}

One may also be interested in the sheaves obtained as as the analogous limit of left adjoints:

\begin{definition}\label{def:shomega}
    As in \cref{rem:pi}, the inclusions $\Sh_{(\infty, r)}(\sC)\hookrightarrow\Sh_{(\infty, r+1)}(\sC)$ admit left adjoints $\pi_{\leq r}$.
    Define the category of \emph{sheaves of $\omega$-categories} over $\sC$ to be the analogous limit
    $$
    \Sh_\omega(\sC) := \varlim\left(\dots\to\Sh_{(\infty, 2)}(\sC) \xrightarrow{\pi_{\leq1}} \Sh_{(\infty, 1)}(\sC) \xrightarrow{\pi_{\leq0}} \Sh_{(\infty, 0)}(\sC)\right)
    $$
    in $\hat{\Cat_\infty}$.
\end{definition}
\begin{remark}\label{rem:shomega}
    As $\Sh_\omega(\sC)$ is given as a limit of left adjoints between locally presentable categories, it is locally presentable.
    Moreover, we have by \cite[Proposition 5.5.3.13 and Corollary 5.5.3.4]{lurie} that it can be computed equivalently as the colimit
    $$
    \Sh_\omega(\sC) \simeq {\varcolim}^R\left(\Sh_{(\infty, 0)}(\sC) \subseteq \Sh_{(\infty, 1)}(\sC) \subseteq \Sh_{(\infty, 2)}(\sC)\subseteq\dots\right)
    $$
    in $\Pres_\infty^R$.
    
    Moreover, the right adjoint inclusions $\Sh_{(\infty, r)}(\sC) \subseteq \Sh_{(\infty, \infty)}(\sC)$ induce a canonical right adjoint functor $\Sh_\omega(\sC)\to\Sh_{(\infty, \infty)}(\sC)$, whose left adjoint we may denote by $\pi_{\leq\omega}$.
\end{remark}

To understand the relationship between $\Sh_\omega(\sC)$ and $\Sh_{(\infty, \infty)}$, we have the following more general result about colimits in $\Pres_\infty^R$.

\section{Distributors and complete Segal spaces}
\label{sec:distributors}

In this section, we relate the categories $\Sh_{(n, r)}(\sC)$ and $\Sh_\omega(\sC)$ constructed in \cref{sec:sheaves} to the general theory of complete Segal space objects in a distributor.

\subsection{Preliminaries}
\label{ssec:prelim}

We recall the necessary theory from \cite{lurie:infty2} below.

\begin{definition}\label{def:distributor}{\cite[Definition 1.2.1]{lurie:infty2}}
    Fix a category $\sY$ and a full subcategory $\sX$.
    Say that $\sY$ is an \emph{$\sX$-distributor} if the following hold:
    \begin{enumerate}[label={(D\arabic*)}]
        \item\label{D:loc-pres}
            $\sX$ and $\sY$ are locally presentable,
        \item\label{D:pi+kappa}
            The inclusion $\sX\subseteq\sY$ admits both a left adjoint $\pi_\sX$ called \emph{$\sX$-truncation}, and a right adjoint $\kappa_\sX$ called \emph{$\sX$-core},
        \item\label{D:univ-colim}
            For all $y\to x$ in $\sY$ with $x\in\sX$, the pullback functor $\sX_{/x}\to\sY_{/y}$ preserves colimits, and
        \item\label{D:locality}
            The functor $\chi : \sX \to (\hat{\Cat_\infty})^\op, x \mapsto \sY_{/x}$, which sends morphisms in $\sX$ to pullback functors, preserves limits.
    \end{enumerate}
\end{definition}
\begin{remark}\label{rem:topos}{\cite[Example 1.2.3 and Remark 1.2.6]{lurie:infty2}}
    If $\sY$ is an $\sX$-distributor, then $\sX$ is necessarily a topos.
    Conversely, every topos $\sX$ is a distributor relative to itself.
\end{remark}

\begin{definition}\label{def:Grpd+Cat}
    For $\sX$ a finitely complete category, let $\Cat(\sX)$ denote the full subcategory of $\Fun(\bDelta^\op, \sX)$ spanned by those $X : \bDelta^\op \to \sX$ satisfying the \emph{Segal condition}; that is, the canonical maps
    $$
    X_n \to X_1\times_{X_0}\dots\times_{X_0}X_1
    $$
    are equivalences for every $n\geq0$.
    
    Further, define $\Grpd(\sX)$ to be the full subcategory of $\Cat(\sX)$ spanned by those $X_\bullet$ such that whenever $[n] = S\cup S'$ with $S\cap S' = \{i\}$, the induced map
    $$
    X_n \to X_S\times_{X_{\{i\}}}X_{S'}
    $$
    is an equivalence.
\end{definition}
\begin{remark}\label{rem:Grpd+Cat}
    If $\sX$ is locally presentable, then so are $\Cat(\sX)$ and $\Grpd(\sX)$.
    Indeed, $\Cat(\sS)$ is the strongly reflective localisation of $\sP(\bDelta)$ at the maps $\Delta[1]\sqcup_{\Delta[0]}\dots\sqcup_{\Delta[0]}\Delta[1]\hookrightarrow\Delta[n]$ for $n\geq0$, and $\Grpd(\sS)$ is similarly a strongly reflective localisation of $\Cat(\sS)$.
    As every locally presentable category is a strongly reflective localisation of a presheaf category, it follows likewise that $\Cat(\sX)$ is a strongly reflective localisation of $\Fun(\bDelta^\op,\sX)$, and $\Grpd(\sX)$ is a strongly reflective localisation of $\Cat(\sX)$.
\end{remark}

\begin{definition}\label{def:Cat}\cite[Definition 1.2.7]{lurie:infty2}
    Fix a topos $\sX$ and an $\sX$-distributor $\sY$.
    Then, define $\SS_\sX(\sY)$ to be the full subcategory of $\Cat(\sY)$ spanned by those $Y\in\Cat(\sY)$ such that $Y_0\in\sX$.
    Call such objects \emph{Segal space objects} in $\sX$.
\end{definition}
\begin{remark}\label{rem:Cat}
    As $\SS_\sX(\sY) \simeq \Cat(\sY)\times_\sX\sY$ is a fibre product of right adjoints between locally presentable categories, $\SS_\sX(\sY)$ is also locally presentable, and the inclusion $\SS_\sX(\sY)\subseteq\Cat(\sY)$ is a right adjoint.
\end{remark}

\begin{lemma}\label{lem:cores}{\cite[Proposition 1.1.14]{lurie:infty2}}
    If $\sX$ is finitely complete, then the inclusion $\Grpd(\sX)\subseteq\Cat(\sX)$ admits a right adjoint $X_\bullet\mapsto X_\bullet^\sim$.
\end{lemma}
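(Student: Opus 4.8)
The plan is to produce the right adjoint explicitly, as the functor sending $X_\bullet$ to the ``maximal groupoid object contained in $X_\bullet$''. Since $\sX$ is only assumed to have finite limits, no adjoint functor theorem is available, so $X_\bullet^\sim$ must be written down by a finite-limit formula: set $X_0^\sim := X_0$; let $X_1^\sim$ be the ``object of equivalences'', defined as the finite limit in $\sX$ that cuts out, inside the object $X_1\times_{X_0\times X_0}X_1$ of pairs $(f,g)$ with matching endpoints, the locus on which the composites $gf$ and $fg$ — formed using the Segal equivalence $X_2\simeq X_1\times_{X_0}X_1$ — coincide with the identities $s_0d_1f$ and $s_0d_0f$ respectively; and for $n\geq 1$ put $X_n^\sim := X_1^\sim\times_{X_0}\cdots\times_{X_0}X_1^\sim$ ($n$ factors), equipped with its evident map to $X_1\times_{X_0}\cdots\times_{X_0}X_1\simeq X_n$. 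Note the defining limit of $X_1^\sim$ is symmetric in $f$ and $g$, hence carries an involution $\iota$.

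The first point to verify is that $X_1^\sim\to X_1$ is a monomorphism: a morphism admits at most a contractible space of two-sided inverses, by the usual uniqueness argument, which uses only the Segal condition. Consequently ``being an equivalence'' is a \emph{property} of a morphism, all the maps $X_n^\sim\to X_n$ are monomorphisms, and the remaining bookkeeping — that the faces and degeneracies of $X_\bullet$ restrict to $X_\bullet^\sim$ (identities are equivalences; a composite of equivalences, and a sub-chain of a chain of equivalences, are equivalences) — is then a property of maps into monomorphisms, so it may be checked after applying $\Hom_\sX(T,-)$ for all $T$, reducing to the classical case $\sX=\sS$. This yields a Segal object $X_\bullet^\sim$ (the Segal condition holds because $X_n^\sim$ is an iterated pullback over $X_0^\sim=X_0$) together with a natural levelwise-monomorphism $\varepsilon_X:X_\bullet^\sim\hookrightarrow X_\bullet$ in $\Cat(\sX)$. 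I would then check $X_\bullet^\sim\in\Grpd(\sX)$: it satisfies the Segal condition, and the remaining conditions of \cref{def:Grpd+Cat} follow, by \cite[Proposition 6.1.2.6]{lurie}, once one knows every edge of $X^\sim$ is an equivalence; this holds since the inverse of an $f\in X_1^\sim$ again lies in $X_1^\sim$ (its inverse being $f$) and the inverse relations hold in the subobject $X^\sim$ — concretely, one builds the inverse of $X_2^\sim\to X_{\{0,1\}}^\sim\times_{X_0}X_{\{0,2\}}^\sim$ out of the involution $\iota$ and the Segal equivalences.

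Finally, to show $\varepsilon$ exhibits $X\mapsto X^\sim$ as right adjoint to $\Grpd(\sX)\subseteq\Cat(\sX)$, fix $K_\bullet\in\Grpd(\sX)$. Since $\varepsilon_X$ is a levelwise monomorphism, the map $\Map_{\Cat(\sX)}(K,X^\sim)\to\Map_{\Cat(\sX)}(K,X)$ is a monomorphism of spaces — it is a limit of the monomorphisms $\Map_\sX(K_n,X_n^\sim)\to\Map_\sX(K_n,X_n)$ — so it suffices to show it is surjective on $\pi_0$, i.e. that every $\alpha:K\to X$ factors through $X^\sim$; by the Segal condition this reduces to checking that $\alpha_1:K_1\to X_1$ factors through $X_1^\sim\hookrightarrow X_1$. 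Using the inversion $\iota_K:K_1\to K_1$ of the groupoid object $K$, the map $(\alpha_1,\alpha_1\iota_K):K_1\to X_1\times_{X_0\times X_0}X_1$ should land in $X_1^\sim$, because the composites occurring in the defining limit of $X_1^\sim$ are the images under the simplicial map $\alpha$ of the corresponding composites in $K$, which are identities as $K$ is a groupoid object. I expect this to be the only genuine obstacle: turning the informal ``$\alpha$ carries inverses to inverses'' into an honest factorization through the finite limit defining $X_1^\sim$ is a diagram chase with the Segal equivalences and the structure maps of $K$ — careful but requiring no new idea. Alternatively, one may simply invoke \cite[Proposition 1.1.14]{lurie:infty2}, where this construction is carried out.
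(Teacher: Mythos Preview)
Your proposal is correct but takes a somewhat different route from the paper's proof sketch. The paper proceeds in three stages: first it constructs $X_\bullet^\sim$ explicitly when $\sX=\sS$ by passing to the $\Ho(\sS)$-enriched homotopy category $hX$ and taking $X_n^\sim\subseteq X_n$ to be the full subspace on cells whose edges become isomorphisms in $hX$; then it handles $\sX=\sP(\sC)$ pointwise; finally, for general finitely complete $\sX$, it applies the Yoneda embedding $j:\sX\hookrightarrow\sP(\sX)$, forms $(jX_\bullet)^\sim$ in $\Grpd(\sP(\sX))$ using the presheaf case, and argues that the result is levelwise representable so descends to $\sX$. You instead write down $X_\bullet^\sim$ directly as a finite-limit formula in $\sX$ and then verify its properties, invoking Yoneda only to check that certain maps factor through the monomorphisms $X_n^\sim\hookrightarrow X_n$. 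Your approach makes it transparent that the construction requires only finite limits and sidesteps the representability argument; the paper's approach is more modular, reducing everything to the base case $\sS$ where the statement is classical and the homotopy-category description of ``equivalence'' is available without building it from scratch. Both sketches ultimately defer to \cite[Proposition~1.1.14]{lurie:infty2} for full details, as you note at the end.
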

\begin{proof}[Proof sketch.]
    If $\sX \simeq \sS$, then we construct the right adjoint explicitly.
    For $X_\bullet\in\Cat(\sS)$, define the $\Ho(\sS)$-enriched category $hX$ where its objects are the underlying points of $X_0$, and the mapping space $\Hom_{hX}(x, y)$ is the homotopy class of the fibre product $\{x\}\times_{X_0}X_1\times_{X_0}\{y\}$ in $\Ho(\sS)$.
    Then, we define $X_n^\sim$ to be the full subspace of $X_n$ spanned by those cells $\sigma\in X_n$ for which each $d_i\sigma$ descends to an isomorphism in $hX$.
    
    If $\sX \simeq \sP(\sC)$, then $\Cat(\sX) \simeq \Fun(\sC^\op, \Cat(\sS))$ and $\Grpd(\sX) \simeq \Fun(\sC^\op, \Grpd(\sX))$, so the right adjoint in this case is given pointwise by the right adjoint for $\sS$.
    
    For a general $\sX$, let $X_\bullet\in\Cat(\sX)$, and consider the object $jX_\bullet\in\Cat(\sP(\sX))$ obtained by pointwise post-composition with the Yoneda embedding $j:\sX\hookrightarrow\sP(\sX)$.
    Then, the reflection $(jX_\bullet)^\sim\in\Grpd(\sP(\sX))$ turns out to be pointwise representable, allowing it to descend to an object $X_\bullet^\sim\in\Grpd(\sX)$.
\end{proof}
\begin{corollary}\label{cor:Gp}{\cite[Notation 1.2.9]{lurie:infty2}}
    For an $\sX$-distributor $\sY$, the inclusion $\Grpd(\sX)\subseteq\SS_\sX(\sY)$ admits a right adjoint $\Gp : \SS_\sX(\sY) \to \Grpd(\sX)$.
\end{corollary}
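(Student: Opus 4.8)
\emph{Proposal.} The plan is to factor the inclusion $\Grpd(\sX)\subseteq\Cat_\sX(\sY)$ through $\Cat(\sX)$ as a composite of two fully faithful inclusions, each of which admits a right adjoint, and then take the composite of those right adjoints.

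First I would record that, by the definition of an $\sX$-distributor, the inclusion $\sX\subseteq\sY$ has a left adjoint $\pi_\sX$, so it preserves all limits. Consequently a simplicial object of $\sX$ satisfies the Segal condition computed in $\sX$ if and only if it satisfies it computed in $\sY$, and its value at $[0]$ automatically lies in $\sX$; hence $\Cat(\sX)$ sits inside $\Cat_\sX(\sY)$ (cf.\ \cref{def:Cat}), and the inclusion in the statement factors as $\Grpd(\sX)\subseteq\Cat(\sX)\subseteq\Cat_\sX(\sY)$. All of these inclusions are fully faithful, since $\Cat(\sX)\subseteq\Fun(\bDelta^\op,\sX)$, $\Cat_\sX(\sY)\subseteq\Fun(\bDelta^\op,\sY)$, and $\Fun(\bDelta^\op,\sX)\subseteq\Fun(\bDelta^\op,\sY)$ all are.

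The inclusion $\Grpd(\sX)\subseteq\Cat(\sX)$ has a right adjoint $X_\bullet\mapsto X_\bullet^\sim$ by \cref{lem:cores}. For $\Cat(\sX)\subseteq\Cat_\sX(\sY)$, I claim the right adjoint is given by pointwise application of the $\sX$-core $\kappa_\sX$. Indeed, post-composition with $\kappa_\sX$ is right adjoint to post-composition with $\sX\subseteq\sY$ on functor categories out of $\bDelta^\op$; since $\kappa_\sX$ is a right adjoint it preserves limits, so it carries Segal objects in $\sY$ to Segal objects in $\sX$, yielding a functor $(\kappa_\sX)_*:\Cat_\sX(\sY)\to\Cat(\sX)$. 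Chasing the fully faithful inclusions above then gives, naturally in $C_\bullet\in\Cat(\sX)$ and $Y_\bullet\in\Cat_\sX(\sY)$,
\[
\Map_{\Cat(\sX)}\!\big(C_\bullet,(\kappa_\sX)_*Y_\bullet\big)\;\simeq\;\Map_{\Fun(\bDelta^\op,\sY)}(C_\bullet,Y_\bullet)\;\simeq\;\Map_{\Cat_\sX(\sY)}(C_\bullet,Y_\bullet),
\]
so $(\kappa_\sX)_*$ is the desired right adjoint. The composite $\Gp:=(-)^\sim\circ(\kappa_\sX)_*$, sending $Y_\bullet\mapsto(\kappa_\sX Y_\bullet)^\sim$, is then right adjoint to $\Grpd(\sX)\subseteq\Cat_\sX(\sY)$.

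I do not anticipate a genuine obstacle here: the only points needing care are that the inclusion $\sX\subseteq\sY$ preserves limits (so that $\Cat(\sX)$ really is a full subcategory of $\Cat_\sX(\sY)$ on which the Segal condition is unambiguous) and that $\kappa_\sX$ preserves limits (so that pointwise $\kappa_\sX$ lands in $\Cat(\sX)$), and both are immediate from the distributor axioms. Everything else is formal bookkeeping with adjunctions and full subcategories.
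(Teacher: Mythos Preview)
Your proposal is correct and matches the paper's approach: the paper does not write out a separate proof for this corollary, but the subsequent \cref{rem:Gp} records exactly the formula $\Gp_\bullet Y \simeq (\kappa_{\sX,*}Y_\bullet)^\sim$ you derive, confirming that the intended argument is the same factorisation through $\Cat(\sX)$ using \cref{lem:cores} and the pointwise core.
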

\begin{remark}\label{rem:Gp}
    Explicitly, $\Gp_\bullet Y \simeq (\kappa_{\sX,*}Y_\bullet)^\sim$, where $\kappa_{\sX,*}$ applies the core pointwise.
    By the proof sketch of \cref{lem:cores}, the canonical map $\Gp_0Y\to Y_0$ is an equivalence for all $Y_\bullet\in\SS_\sX(\sY)$.
\end{remark}

\begin{definition}\label{def:CSS}
    Fix an $\sX$-distributor $\sY$.
    Say that a morphism $f:Y_\bullet\to Y_\bullet'$ in $\SS_\sX(\sY)$ is a \emph{Segal equivalence} if the following conditions hold:
    \begin{enumerate}[label={(E\arabic*)}]
        \item\label{E:ff}
            $f$ is \emph{fully faithful}, in that
            $$
            \begin{tikzcd}
                Y_1 \ar[r]\ar[d]
                & Y_1' \ar[d] \\
                Y_0\times Y_0 \ar[r]
                & Y_0'\times Y_0'
            \end{tikzcd}
            $$
            is a pullback square, and
        \item\label{E:es}
            $f$ is \emph{essentially surjective} in that the induced map $|\Gp_\bullet Y| \to |\Gp_\bullet Y'|$ is an equivalence in $\sX$.
    \end{enumerate}
    Say that $Y_\bullet \in \SS_\sX(\sY)$ is a \emph{complete Segal space} if it is local with respect to the Segal equivalences.
    Denote by $\CSS_\sX(\sY)$ the full subcategory of $\SS_\sX(\sY)$ spanned by the complete Segal spaces.
\end{definition}
\begin{remark}\label{rem:CSS}
    By \cite[Theorem 1.2.13]{lurie:infty2}, $Y_\bullet\in\SS_\sX(\sY)$ is a complete Segal space if and only if $\Gp_\bullet Y$ is essentially constant; that is, $\Gp_\bullet Y$ lies in the essential image of the diagonal $\sX\subseteq\Grpd(\sX)$.
\end{remark}

\begin{proposition}\label{prop:CSS}{\cite[Proposition 1.3.2]{lurie:infty2}}
    If $\sY$ is an $\sX$-distributor, then the fully faithful diagonal functor $\sX\subseteq\CSS_\sX(\sY)$ exhibits $\CSS_\sX(\sY)$ as an $\sX$-distributor as well.
\end{proposition}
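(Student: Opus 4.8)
The plan is to verify the four conditions (D1)--(D4) of \cref{def:distributor} for the pair $\sX\subseteq\CSS_\sX(\sY)$, the inclusion being the diagonal $x\mapsto x_\bullet$ onto constant simplicial objects. Each $x_\bullet$ is indeed a complete Segal space: the Segal maps are equivalences since $x_\bullet$ is constant, $(x_\bullet)_0=x\in\sX$, and $\Gp_\bullet x_\bullet\simeq x_\bullet$ is essentially constant, so \cref{rem:CSS} applies. For (D1): $\sX$ is a topos (\cref{rem:topos}), hence presentable; and $\CSS_\sX(\sY)$ is the localization of the presentable category $\Cat_\sX(\sY)$ (\cref{rem:Cat}) at the Segal equivalences, which is accessible --- either via the model structure of \cref{rem:model-structure}, or from \cref{rem:CSS}, which exhibits $\CSS_\sX(\sY)$ as the pullback $\Cat_\sX(\sY)\times_{\Grpd(\sX)}\sX$ along the accessible functors $\Gp$ (\cref{cor:Gp}) and the diagonal $\sX\hookrightarrow\Grpd(\sX)$ --- so $\CSS_\sX(\sY)$ is presentable and the inclusion $\CSS_\sX(\sY)\subseteq\Cat_\sX(\sY)$ admits a left adjoint $L$ (``completion'').

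For (D2) I would compute both adjoints of the diagonal by a mapping-space chase. Since $x_\bullet$ is a groupoid object, $\Grpd(\sX)\hookrightarrow\Cat_\sX(\sY)$ is left adjoint to $\Gp$ (\cref{cor:Gp}), and $\sX\hookrightarrow\Grpd(\sX)$ is left adjoint to $\varlim_{\bDelta^\op}$, we get $\Map_{\CSS_\sX(\sY)}(x_\bullet,Y_\bullet)\simeq\Map_{\Grpd(\sX)}(x_\bullet,\Gp_\bullet Y)\simeq\Map_\sX(x,\varlim_{\bDelta^\op}\Gp_\bullet Y)$; and as $Y_\bullet$ is complete, $\Gp_\bullet Y$ is essentially constant on $\Gp_0Y\simeq Y_0$ (\cref{rem:Gp}), so $\varlim_{\bDelta^\op}\Gp_\bullet Y\simeq Y_0$, giving the $\sX$-core $\kappa_\sX Y_\bullet\simeq Y_0$ (this also re-establishes full faithfulness of the diagonal). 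Dually, $\Map_{\CSS_\sX(\sY)}(Y_\bullet,x_\bullet)\simeq\Map_{\Fun(\bDelta^\op,\sY)}(Y_\bullet,x_\bullet)\simeq\Map_\sY(\varcolim_{\bDelta^\op}Y_\bullet,x)\simeq\Map_\sX(\pi_\sX^\sY\varcolim_{\bDelta^\op}Y_\bullet,x)$, where $\pi_\sX^\sY:\sY\to\sX$ is the $\sX$-truncation of the distributor $\sY$; so the $\sX$-truncation is $\pi_\sX Y_\bullet\simeq\pi_\sX^\sY(\varcolim_{\bDelta^\op}Y_\bullet)$.

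For (D3), fix $Y_\bullet\to x_\bullet$ in $\CSS_\sX(\sY)$ with $x\in\sX$; the pullback functor sends $(z\to x)\in\sX_{/x}$ to $z_\bullet\times_{x_\bullet}Y_\bullet$, whose level-$n$ term is $z\times_xY_n$. First, this lies in $\CSS_\sX(\sY)$: the Segal condition is inherited from $Y_\bullet$; $z\times_xY_0\in\sX$ since $\sX$ is reflective, hence closed under limits, in $\sY$; and as $\Gp$ preserves limits, $\Gp_\bullet(z_\bullet\times_{x_\bullet}Y_\bullet)\simeq z_\bullet\times_{x_\bullet}\Gp_\bullet Y_\bullet$ is essentially constant. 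Second, viewed as a functor $\sX_{/x}\to\Fun(\bDelta^\op,\sY)_{/Y_\bullet}$ it is computed levelwise by the functors $z\mapsto z\times_xY_n\colon\sX_{/x}\to\sY_{/Y_n}$, each of which preserves colimits by (D3) for $\sY$ (applied to $Y_n\to x$); since colimits in $\Fun(\bDelta^\op,\sY)_{/Y_\bullet}$ are levelwise, the functor preserves colimits. Third, by the first point any such colimit already lies in the reflective subcategory $\CSS_\sX(\sY)$ and so is fixed by $L$; hence the pullback functor $\sX_{/x}\to\CSS_\sX(\sY)_{/Y_\bullet}$ preserves colimits.

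For (D4), I would reduce to $\sY$ by slicing, using the standard fact that $\sY_{/x}$ is an $\sX_{/x}$-distributor for $x\in\sX$. The equivalence $\Fun(\bDelta^\op,\sY)_{/x_\bullet}\simeq\Fun(\bDelta^\op,\sY_{/x})$ restricts --- imposing the Segal condition and $Y_0\in\sX_{/x}$ --- to $\Cat_\sX(\sY)_{/x_\bullet}\simeq\Cat_{\sX_{/x}}(\sY_{/x})$, and since completeness is detected by $\Gp$ (\cref{rem:CSS}) and the two $\Gp$-constructions agree over $x$, it restricts further to a natural equivalence $\CSS_\sX(\sY)_{/x_\bullet}\simeq\CSS_{\sX_{/x}}(\sY_{/x})$. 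Now (D4) asks that $x\mapsto\CSS_\sX(\sY)_{/x_\bullet}$ carry limit diagrams in $\sX$ to limit diagrams of $\infty$-categories; this holds for $x\mapsto\sY_{/x}$ by (D4) for $\sY$, it passes to $x\mapsto\Fun(\bDelta^\op,\sY)_{/x_\bullet}\simeq\Fun(\bDelta^\op,\sY_{/x})$ because $\Fun(\bDelta^\op,-)$ preserves limits of $\infty$-categories, and it survives cutting down to $\Cat_\sX(\sY)_{/x_\bullet}$ and then $\CSS_\sX(\sY)_{/x_\bullet}$ because the Segal condition, the condition $Y_0\in\sX$, and completeness are each stable under the pullback transition functors and detected pointwise in the limit. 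I expect (D4) to be the main obstacle: the delicate part is verifying rigorously that completeness descends in this way --- which rests on the localness of distributors and on $\Gp$ detecting completeness compatibly with pullback --- and an alternative is to check (D4) directly via a van Kampen--type descent argument in $\sX$ and $\sY$, using $\kappa_\sX Y_\bullet\simeq Y_0$ and the levelwise pullback formula from (D3).
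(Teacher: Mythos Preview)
The paper does not give its own proof of this proposition: it is stated with an inline citation to \cite[Proposition 1.3.2]{lurie:infty2} and no proof environment follows. So there is nothing in the paper to compare against directly; what you have written is a self-contained verification of (D1)--(D4), which is essentially the strategy Lurie follows.

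Your outline is largely sound. A few remarks. For (D2), your truncation formula $\pi_\sX Y_\bullet\simeq\pi_\sX^\sY|Y_\bullet|$ is correct as an adjunction computation, but note that the paper's \cref{rem:CSSdist} records the sharper fact $\pi_\sX Y_\bullet\simeq|Y_\bullet|$: the geometric realisation of a complete Segal space already lands in $\sX$, so the extra $\pi_\sX^\sY$ is redundant. This is a nontrivial consequence of the distributor axioms (it uses (D4) for $\sY$), and you do not need it merely to establish the existence of the left adjoint, but it is worth knowing. For (D1), the pullback description $\CSS_\sX(\sY)\simeq\Cat_\sX(\sY)\times_{\Grpd(\sX)}\sX$ is fine, but you should check that $\Gp$ is accessible rather than just invoke \cref{cor:Gp}, which only records that it is a right adjoint; this is where Lurie's argument does some work (and the paper's \cref{lem:colim-stability} records the needed accessibility for $\CSS_\sX(\sY)$).

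Your (D3) argument is clean and correct: the point that the levelwise colimit $z\times_xY_\bullet$ is already a complete Segal space, hence fixed by $L$, is exactly the right observation. For (D4), you correctly flag the identification $\CSS_\sX(\sY)_{/x_\bullet}\simeq\CSS_{\sX_{/x}}(\sY_{/x})$ as the crux; this is indeed how Lurie proceeds, and the ``standard fact'' that $\sY_{/x}$ is an $\sX_{/x}$-distributor is itself a lemma in \cite{lurie:infty2} that requires some argument. Your alternative van Kampen--style approach via \cite[Corollary 1.2.5]{lurie:infty2} (the paper invokes this in \cref{lem:presheaves}) would also work and might be cleaner for (D4).
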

\begin{remark}\label{rem:CSSdist}
    The truncation functor $\pi_\sX:\CSS_\sX(\sY)\to\sX$, being left adjoint to the diagonal, is given by geometric realisation $\pi_\sX Y \simeq |Y_\bullet|$.
    On the other hand, the core functor $\kappa_\sX:\CSS_\sX(\sY)\to\sX$ is given by $\kappa_\sX Y \simeq Y_0$.
\end{remark}

\subsection{The \texorpdfstring{$\infty$}{oo}-categories of distributors}
\label{ssec:dist}

\begin{definition}\label{def:dist-cats}
    Fix a topos $\sX$.
    Then, define $\Dist_\sX^L$ to be the subcategory of $(\hat{\Cat_\infty})_{\sX/}$ where the objects are $\sX$-distributors $\sX\subseteq\sY$, and a functor $\psi:\sZ\to\sY$ under $\sX$ is a morphism of $\Dist_\sX^L$ if and only if
    \begin{enumerate}[label={(L\arabic*)}]
        \item\label{L:cocts}
            $\psi$ preserves colimits, and
        \item\label{L:pi}
            $\psi$ preserves truncation, in that $\pi_\sX\psi\simeq\pi_\sX$.
    \end{enumerate}
    On the other hand, define $\Dist_\sX^R$ to be the subcategory of $(\hat{\Cat_\infty})_{\sX/}$ where the objects are again the $\sX$-distributors $\sX\subseteq\sY$, and a functor $\phi:\sY\to\sZ$ under $\sX$ is a morphism of $\Dist_\sX^R$ if and only if
    \begin{enumerate}[label={(R\arabic*)}]
        \item\label{R:cts}
            $\phi$ preserves limits and $\lambda$-filtered colimits for some $\lambda\gg0$, and
        \item\label{R:kappa}
            $\phi$ preserves cores, in that $\kappa_\sX\phi\simeq\kappa_\sX$.
    \end{enumerate}
\end{definition}
\begin{remark}\label{rem:dist-cats}
    Since $\sX$-distributors are necessarily locally presentable, \ref{L:cocts} is equivalent by \cite[Corollary 5.5.2.9]{lurie} to asserting that $\psi$ admits a right adjoint $\phi$.
    As truncation is left adjoint to the inclusion of $\sX$, \ref{L:pi} is then equivalent to asserting that the right adjoint $\phi$ lies under $\sX$; that is, $\phi|_\sX\simeq\id_\sX$.
    
    Dually, \ref{R:cts} is likewise equivalent to asserting that $\phi$ admits a left adjoint $\psi$.
    As core is right adjoint to the inclusion of $\sX$, \ref{R:kappa} is equivalent to asserting that the left adjoint $\psi$ lies under $\sX$; that is $\psi|_\sX\simeq\id_\sX$.
    This demonstrates that the two categories of \cref{def:dist-cats} are formally dual: $\Dist_\sX^L\simeq(\Dist_\sX^R)^\op$.
\end{remark}

\begin{proposition}\label{prop:distRlimits}
    For any topos $\sX$, the functor $\Dist_\sX^R\to(\hat{\Cat_\infty})_{/\sX}$ sending an $\sX$-distributor $\sY$ to its core $\kappa_\sX:\sY\to\sX$ creates all small limits.
\end{proposition}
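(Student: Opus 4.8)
The plan is to exhibit the limit explicitly as the limit of the core functors, equip its underlying category with a distributor structure over $\sX$, and then deduce the creation statement formally from conservativity of $F$.

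First I would fix a small diagram $\sY_\bullet : K\to\Dist_\sX^R$, writing $\iota_k:\sX\hookrightarrow\sY_k$ for the embeddings, $\pi_k\dashv\iota_k\dashv\kappa_k$ for the truncations and cores, and $\phi_f:\sY_k\to\sY_{k'}$ for the transition functor of $f:k\to k'$; by \ref{R:cts} and \cref{rem:dist-cats} each $\phi_f$ is a right adjoint lying under $\sX$, with left adjoint $\psi_f$ also under $\sX$. Let $(q:\sY\to\sX)$ be the limit in $(\hat{\Cat_\infty})_{/\sX}$ of $k\mapsto(\kappa_k:\sY_k\to\sX)$; concretely $\sY=\varlim_{K^\triangleright}D^+$ in $\hat{\Cat_\infty}$, where $D^+$ extends $\sY_\bullet$ by a terminal cone point $c$ with $D^+(c)=\sX$ and legs $\kappa_k$, and $q=p_c$ and $\kappa_kp_k\simeq q$ for the projections $p_j$. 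Since $D^+$ takes values in locally presentable categories and right adjoints, $\sY$ is locally presentable and each $p_j$ is a right adjoint (limits of locally presentable categories along right adjoints are computed in $\hat{\Cat_\infty}$; see \cite[\S5.5.3]{lurie}), which gives \ref{D:loc-pres}. I would then fix a regular cardinal $\lambda$ above $|K|$ and the presentability and accessibility ranks of all the data, so that limits and $\lambda$-filtered colimits in $\sY$ are detected and preserved by the $p_j$.

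Next I would build the distributor structure. The $\iota_k$ are compatible with the $\phi_f$ (which lie under $\sX$) and with the cone legs (as $\kappa_k\iota_k\simeq\id_\sX$), hence assemble into a functor $\iota:\sX\to\sY$ with $p_k\iota\simeq\iota_k$ and $q\iota\simeq\id_\sX$. Using $\iota_k\dashv\kappa_k$ and $\kappa_kp_k\simeq q$, the functor $j\mapsto\Map_{D^+(j)}(p_j\iota a,p_jy)$ on $K^\triangleright$ is naturally constant at $\Map_\sX(a,qy)$; since $K^\triangleright$ is weakly contractible and $\Map_\sY(\iota a,y)\simeq\varlim_j\Map_{D^+(j)}(p_j\iota a,p_jy)$, this yields a natural equivalence $\Map_\sY(\iota a,y)\simeq\Map_\sX(a,qy)$, so $\iota\dashv q$; taking $y=\iota b$ and using $q\iota\simeq\id_\sX$ shows $\iota$ is fully faithful. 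As each $\iota_k$ is also a right adjoint (to $\pi_k$) and $\lambda$-accessible, $\iota$ preserves limits and $\lambda$-filtered colimits and so admits a left adjoint $\pi_\sX$; thus $\pi_\sX\dashv\iota\dashv q$ with $\iota$ fully faithful, which is \ref{D:pi+kappa}, with core $\kappa_\sX\simeq q$. For \ref{D:univ-colim} and \ref{D:locality} I would argue levelwise: slices of the limit are the limit of slices, so $\sY_{/\iota x}\simeq\varlim_{K^\triangleright}$ of the diagram $k\mapsto(\sY_k)_{/\iota_kx}$, $c\mapsto\sX_{/x}$, with the pullback functor along $g:x\to x'$ in $\sX$ assembled from the base-change functors of the $\sY_k$ and of $\sX$; each $\sY_k$ satisfies \ref{D:univ-colim} and \ref{D:locality}, as does $\sX$ (a distributor over itself by \cref{rem:topos}), so one should be able to transport these properties through the limit. \emph{I expect this reduction to be the main obstacle}: one has to check carefully that colimits and pullbacks inside the slices of $\sY$ are controlled by those of the $\sY_k$ and of $\sX$ — i.e.\ that the relevant $K^\triangleright$-limit of slice categories together with its base-change functors is computed in $\Pres_\infty^L$ rather than merely in $\hat{\Cat_\infty}$ — which comes down to a Beck--Chevalley compatibility between the transition functors $\phi_f$ and the base-change structure over $\sX$.

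Finally, $\kappa_kp_k\simeq q$ shows the $p_k$ preserve cores, so they are morphisms of $\Dist_\sX^R$; and given any $\sX$-distributor $\sZ$ with a compatible cone $(\sZ\to\sY_k)$ in $\Dist_\sX^R$, the induced functor $\sZ\to\sY$ preserves limits, preserves $\lambda'$-filtered colimits for $\lambda'\gg0$, and satisfies $q\circ(\sZ\to\sY)\simeq\kappa_k\circ(\sZ\to\sY_k)\simeq\kappa_\sX^\sZ$, so it lies in $\Dist_\sX^R$; the universal property of $\sY$ in $\hat{\Cat_\infty}$ then makes this cone a limit cone in $\Dist_\sX^R$. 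Hence $\Dist_\sX^R$ has all small limits, computed as above, and $F$ sends them to limits in $(\hat{\Cat_\infty})_{/\sX}$ by construction. Since $F$ is conservative — if a morphism of $\Dist_\sX^R$ is an equivalence of underlying categories, its inverse again preserves limits, $\lambda$-filtered colimits, and cores — the standard comparison of an arbitrary cone with the canonical limit cone shows that a cone in $\Dist_\sX^R$ is a limit cone if and only if its image under $F$ is one. This proves that $F$ creates all small limits.
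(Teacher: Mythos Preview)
Your overall architecture matches the paper's proof exactly: compute the limit over $K^\triangleright$ in $\hat{\Cat_\infty}$, identify it as a limit in $\Pres_\infty^R$ via \cite[Theorem 5.5.3.18]{lurie}, verify \ref{D:loc-pres}--\ref{D:locality} for $\sY$, and then check the cone is universal in $\Dist_\sX^R$. Your treatment of \ref{D:loc-pres} and \ref{D:pi+kappa} is fine (the paper is terser, simply noting that the left adjoint to $q$ is the map induced by the cone of inclusions $\sX\subseteq\sY_k$, which is fully faithful because each leg is). The conservativity argument you append at the end to upgrade ``has limits and $F$ preserves them'' to ``$F$ creates limits'' is a legitimate addition; the paper leaves this implicit.

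The only substantive issue is the part you flag yourself. You anticipate a Beck--Chevalley obstruction for \ref{D:univ-colim} and \ref{D:locality}, but the paper disposes of both without one. For \ref{D:univ-colim}: since each $\phi_f$ preserves limits and restricts to the identity on $\sX$, the pullback functors $\sX_{/x}\to(\tilde\sY_k)_{/y_k}$ assemble into a cone over the diagram $k\mapsto(\tilde\sY_k)_{/y_k}$ in $\hat{\Cat_\infty}$; each leg of this cone is a left adjoint (by \ref{D:univ-colim} for $\sY_k$), so the induced map $\sX_{/x}\to\varlim_k(\tilde\sY_k)_{/y_k}\simeq\sY_{/y}$ is a left adjoint by \cite[Proposition 5.5.3.13]{lurie}. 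No base-change comparison between $\phi_f$ and $\psi_f$ is needed; you only need that the \emph{right} adjoints form a cone, which is immediate from limit-preservation. For \ref{D:locality}: the functors $\chi^k:x\mapsto(\tilde\sY_k)_{/x}$ each preserve limits, hence so does the induced $\vec\chi:\sX\to\Fun(K^\triangleright,(\hat{\Cat_\infty})^\op)$, and $\chi$ is the composite of $\vec\chi$ with the limit functor $\varlim:\Fun(K^\triangleright,(\hat{\Cat_\infty})^\op)\to(\hat{\Cat_\infty})^\op$, which also preserves limits. So the ``main obstacle'' you identify dissolves once you observe that both axioms are expressible purely in terms of limit-preserving functors and cones thereof, and that limits of such are again limit-preserving.
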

\begin{proof}
    Let $\sY_\bullet : K \to \Dist_\sX^R$ be a small diagram indexed by a simplicial set $K$.
    The limit of the diagram $K\to(\hat{\Cat_\infty})_{/\sX}$ can be computed as the limit of the corresponding diagram $K^\triangleright\to\hat{\Cat_\infty}$ that sends the cocone point of $K^\triangleright$ to $\sX$ via core maps $\kappa_\sX:\sY_k\to\sX$.
    By \cite[Theorem 5.5.3.18]{lurie}, we can compute this limit instead as a limit of the diagram $\tilde\sY_\bullet : K^\triangleright\to\Pres_\infty^R$.
    
    Let $\sY := \varlim_k\tilde\sY_k$, then $\sY$ is locally presentable, and the projection map $\kappa_\sX:\sY\to\sX$ is a right adjoint.
    The left adjoint coincides with the functor $\sX\to\sY\simeq\varlim_k\tilde\sY_k$ in $\hat{\Cat_\infty}$ induced by the inclusions $\sX\subseteq\sY_k$ for $k\in K^\triangleright$.
    In particular, since these inclusions are fully faithful right adjoints, the same is true for the left adjoint $\sX\to\sY$.
    This proves that $\sY$ satisfies \ref{D:loc-pres} and \ref{D:pi+kappa}.
    
    Let $y\to x$ in $\sY$ with $x\in\sX$.
    Note that $\sY_{/y}\simeq\varlim_k(\tilde\sY_k)_{/y_k}$, where $y_k$ is the image of $y$ under the projection $\sY\to\sY_k$, and the map $\sX_{/x}\to\sY_{/y}$ is the canonical map induced by the functors $\sX_{/x}\to(\tilde\sY_k)_{/y_k}$ for each $k\in K^\triangleright$.
    Since each $\sY_k$ is an $\sX$-distributor, the functor $\sX_{/x}\to(\tilde\sY_k)_{/y_k}$ is a left adjoint for every $k\in K^\triangleright$.
    By \cite[Proposition 5.5.3.13]{lurie}, $\sX_{/x}\to\sY_{/y}$ must be a left adjoint as well, establishing \ref{D:univ-colim}.
    
    For $k\in K^\triangleright$, let $\chi^k:\sX\to(\hat{\Cat_\infty})^\op$ denote the functor mapping $x\in\sX$ to the category $(\tilde\sY_k)_{/x}$.
    Since each $\chi^k$ preserves limits from $\tilde\sY_k$ being an $\sX$-distributor, the same is true for the induced functor $\vec\chi : \sX \to \Fun(K^\triangleright, (\hat{\Cat_\infty})^\op)$ that sends each $x\in\sX$ to the functor mapping $k\mapsto(\tilde\sY_k)_{/x}$.
    Now, the functor $\chi : \sX \to (\hat{\Cat_\infty})^\op$ sending $x\mapsto\sY_{/x}\simeq\varlim_k(\tilde\sY_k)_{/x}$ is precisely the composite of limit-preserving functors
    $$
    \sX \xrightarrow{\vec\chi} \Fun(K^\triangleright, (\hat{\Cat_\infty})^\op) \xrightarrow{\varlim} (\hat{\Cat_\infty})^\op
    $$
    and thus preserves limits, establishing \ref{D:locality}.
    
    Therefore, the limit $\sY$ is an $\sX$-distributor, and the canonical projections $\sY\to\sY_k$ are core-preserving right adjoints.
    Suppose we have a cone from an $\sX$-distributor $\sZ$ to $\sY_\bullet$ in $\Dist_\sX^R$.
    This induces a cone from $\sZ$ to $\tilde\sY_\bullet$ in $\Pres_\infty^R$, inducing an essentially unique right adjoint $\sZ\to\sY$.
    Moreover, since the functors $\sZ\to\sY_k$ lie under $\sX$, the same is true for $\sZ\to\sY$, ensuring that the canonical functor $\sZ\to\sY$ is a morphism of $\Dist_\sX^R$, proving that $\sY$ is indeed a limit of $\sY_\bullet$ in $\Dist_\sX^R$.
\end{proof}

\begin{proposition}\label{prop:distLlimits}
    For any topos $\sX$, the functor $\Dist_\sX^L\to(\hat{\Cat_\infty})_{/\sX}$ sending an $\sX$-distributor $\sY$ to its truncation $\pi_\sX:\sY\to\sX$ creates all small limits.
\end{proposition}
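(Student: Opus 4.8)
The plan is to mirror the proof of Proposition \ref{prop:distRlimits}, but working on the "left" side, i.e. with $\Dist_\sX^L$ and truncation functors $\pi_\sX$ in place of $\Dist_\sX^R$ and core functors. First I would invoke the duality established in Remark \ref{rem:dist-cats}: we have $\Dist_\sX^L\simeq(\Dist_\sX^R)^\op$, and under this equivalence, limits in $\Dist_\sX^L$ correspond to colimits in $\Dist_\sX^R$. So the first instinct is to deduce the statement purely formally. However, Proposition \ref{prop:distRlimits} only tells us that $\Dist_\sX^R$ has (and $\kappa_\sX$ creates) small \emph{limits}, not colimits, so the duality does not immediately apply; we must argue directly.

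Here is how I would carry it out. Let $\sY_\bullet : K \to \Dist_\sX^L$ be a small diagram. Since each morphism of $\Dist_\sX^L$ is a colimit-preserving functor between locally presentable categories lying under $\sX$ (Remark \ref{rem:dist-cats}), in particular it is a left adjoint, so the diagram lives in $\Pres_\infty^L$. By \cite[Theorem 5.5.3.18]{lurie} (or rather its $\Pres_\infty^L$-analogue) together with \cite[Corollary 5.5.3.4]{lurie}, the limit of the diagram $K \to \Pres_\infty^L$ can be computed by passing to right adjoints: it is the \emph{colimit} in $\Pres_\infty^R$ of the diagram $K^\op\to\Pres_\infty^R$ of right adjoint functors. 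Here I would use Lemma \ref{lem:ff-colim} to get a concrete handle on that colimit — the right adjoints $\phi_k : \sY \to \sY_k$ are computed as appropriate limits, and the canonical functors $\sY_k \to \sY$ (the left adjoints) are the structure maps of a limit in $\hat{\Cat_\infty}$. Call the resulting locally presentable category $\sY := \varlim_k \sY_k$ (limit in $\hat{\Cat_\infty}$ of the left-adjoint diagram, equivalently the $\Pres_\infty^R$-colimit of the right-adjoint diagram). The diagonal $\sX \to \sY$ is the functor induced by the diagonals $\sX \to \sY_k$; since each is a fully faithful left adjoint under $\sX$, so is the induced functor, establishing \ref{D:loc-pres} and \ref{D:pi+kappa} for $\sY$ (with $\pi_\sX : \sY \to \sX$ the projection, a left adjoint). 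Then I would verify \ref{D:univ-colim} and \ref{D:locality}: given $y\to x$ in $\sY$ with $x\in\sX$, express $\sY_{/y}$ via the colimit/limit description and observe that $\sX_{/x}\to\sY_{/y}$ is the map induced by the colimit-preserving functors $\sX_{/x}\to (\sY_k)_{/y_k}$; universality of colimits passes through. For \ref{D:locality}, the functor $x\mapsto\sY_{/x}$ factors as the composite $\sX \xrightarrow{\vec\chi} \Fun(K^\op,(\hat{\Cat_\infty})^\op) \xrightarrow{\varcolim} (\hat{\Cat_\infty})^\op$ — note the \emph{colimit} here, reflecting that $\sY_{/x}$ is the $\Pres_\infty^R$-colimit of the $(\sY_k)_{/x}$ — and each $\vec\chi$ component preserves limits since each $\sY_k$ is an $\sX$-distributor, while the colimit functor preserves limits here because it is computed in $\Pres_\infty^R$ where it agrees with the relevant $\hat{\Cat_\infty}$-limit. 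Finally, given a cone from an $\sX$-distributor $\sZ$ to $\sY_\bullet$ in $\Dist_\sX^L$, passing to right adjoints gives a cocone in $\Pres_\infty^R$, hence an essentially unique right adjoint $\sY \to \sZ$, whose left adjoint $\sZ \to \sY$ is core- — rather truncation- — compatible because each $\sZ \to \sY_k$ is; this exhibits $\sY$ as the limit in $\Dist_\sX^L$ and shows $\pi_\sX$ creates it.

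The main obstacle I anticipate is the bookkeeping around which colimit/limit is computed in which category: the limit in $\Dist_\sX^L$ is computed by a colimit of underlying presentable categories (via right adjoints), so I must be careful that Lemma \ref{lem:ff-colim}, which is stated for \emph{filtered} diagrams of \emph{full subcategory} inclusions, either applies here or is replaced by the general $\Pres_\infty^R$-colimit machinery of \cite[\S5.5.3]{lurie}. In fact for an arbitrary (non-filtered) index $K$, Lemma \ref{lem:ff-colim} does not directly apply and the structure maps $\sY_k \to \sY$ need not be full inclusions; so the cleanest route is to avoid Lemma \ref{lem:ff-colim} entirely and argue with \cite[Theorem 5.5.3.18]{lurie} plus the explicit formula for $\Pres_\infty^R$-colimits (colimit of left adjoints $=$ limit of right adjoints, computed in $\hat{\Cat_\infty}$), exactly dualising each verification step in the proof of Proposition \ref{prop:distRlimits}. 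The verifications of \ref{D:univ-colim} and \ref{D:locality} are then formally dual to those in that proof and should go through without new difficulty.
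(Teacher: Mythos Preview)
Your approach is workable, but it takes an unnecessary detour compared to the paper. The paper does not pass to right adjoints or invoke $\Pres_\infty^R$-colimits at all: it simply observes that the diagram $\sY_\bullet : K \to \Dist_\sX^L$ already lands in $\Pres_\infty^L$, and by \cite[Proposition 5.5.3.13]{lurie} the forgetful functor $\Pres_\infty^L \to \hat{\Cat_\infty}$ \emph{creates limits}. Thus the limit $\sY = \varlim_k \tilde\sY_k$ in $\Pres_\infty^L$ has underlying category the limit in $\hat{\Cat_\infty}$, the projection $\pi_\sX : \sY \to \sX$ is automatically a left adjoint, and its right adjoint is the canonical functor $\sX \to \sY$ induced by the inclusions $\sX \subseteq \sY_k$; since those inclusions are fully faithful \emph{left} adjoints, so is $\sX \to \sY$, giving \ref{D:loc-pres} and \ref{D:pi+kappa}. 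Because the underlying category of $\sY$ is literally the same $\hat{\Cat_\infty}$-limit as appears in \cref{prop:distRlimits}, the verifications of \ref{D:univ-colim} and \ref{D:locality} go through \emph{verbatim}, and the universal property is checked directly in $\Pres_\infty^L$.

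Your route---dualise to a $K^\op$-diagram of right adjoints, compute a $\Pres_\infty^R$-colimit, then translate back---does arrive at the same category $\sY$, but it forces you to track the op's and to argue separately that the ``colimit'' functor in your \ref{D:locality} step preserves limits (which you correctly note reduces to the fact that this $\Pres_\infty^R$-colimit is a $\hat{\Cat_\infty}$-limit). It also led you to reach for \cref{lem:ff-colim}, which, as you already recognised, does not apply to arbitrary $K$. None of this is wrong, but the paper's direct use of $\Pres_\infty^L$-limits eliminates all of that bookkeeping: once you know limits in $\Pres_\infty^L$ are computed in $\hat{\Cat_\infty}$, the proof is essentially a two-line reduction to \cref{prop:distRlimits}.
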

\begin{proof}
    Let $\sY_\bullet : K \to \Dist_\sX^L$ be a small diagram indexed by a simplicial set $K$.
    The limit of the diagram $K \to (\hat{\Cat_\infty})_{/\sX}$ can be computed as a limit of the corresponding diagram $K^\triangleright\to\hat{\Cat_\infty}$ that sends the cocone point of $K^\triangleright$ to $\sX$ via truncation maps $\pi_\sX:\sY_k\to\sX$.
    By \cite[Proposition 5.5.3.13]{lurie}, we can compute this limit instead as a limit of the diagram $\tilde\sY_\bullet:K^\triangleright\to\Pres_\infty^L$.
    
    Let $\sY := \varlim_k\tilde\sY_k$, then $\sY$ is locally presentable, and the projection map $\pi_\sX:\sY\to\sX$ is a left adjoint.
    The right adjoint coincides with the functor $\sX\to\sY\simeq\varlim_k\tilde\sY_k$ in $\hat{\Cat_\infty}$ induced by the inclusions $\sX\subseteq\sY_k$ for $k\in K^\triangleright$.
    In particular, since these inclusions are fully faithful left adjoints, the same is true for the right adjoint $\sX\to\sY$.
    This proves that $\sY$ satisfies \ref{D:loc-pres} and \ref{D:pi+kappa}.
    Moreover, that $\sY$ satisfies \ref{D:univ-colim} and \ref{D:locality} follows the argument in \cref{prop:distRlimits} verbatim.
    
    Therefore, the limit $\sY$ is an $\sX$-distributor, and the canonical projections $\sY\to\sY_k$ are truncation-preserving left adjoints.
    Suppose we have a cone from an $\sX$-distributor $\sZ$ to $\sY_\bullet$ in $\Dist_\sX^L$.
    This induces a cone from $\sZ$ to $\tilde\sY_\bullet$ in $\Pres_\infty^L$, inducing an essentially unique left adjoint $\sZ\to\sY$.
    Moreover, since the functors $\sZ\to\sY_k$ lie under $\sX$, the same is true for $\sZ\to\sY$, ensuring that the canonical functor $\sZ\to\sY$ is a morphism of $\Dist_\sX^L$, proving that $\sY$ is indeed a limit of $\sY_\bullet$ in $\Dist_\sX^L$.
\end{proof}

\begin{remark}\label{rem:completeness}
    From the duality in \cref{rem:dist-cats}, $\Dist_\sX^R$ has all small limits and colimits (and likewise for $\Dist_\sX^L$).
    By \cite[Lemma 2.3.11]{goldthorpe:fixed}, the forgetful functor $\Dist_\sX^R\to\Pres_\infty^R$ creates all small limits indexed by a weakly contractible simplicial set; that is, weakly contractible limits of $\sX$-distributors can be computed on their underlying categories.
    On the other hand, the topos $\sX$ viewed as a distributor over itself is simultaneously the initial and terminal object in $\Dist_\sX^R$.
\end{remark}

\subsection{Functoriality of complete Segal spaces}
\label{ssec:css}

\begin{definition}\label{def:Xprod}
    Let $\sY$ be a category with full subcategory $\sX\subseteq\sY$.
    Say that a functor $\phi:\sY\to\sZ$ \emph{preserves fibre products over $\sX$} if for every fibre product $y^1\times_xy^2$ in $\sY$ with $x\in\sX$, the induced map $\phi(y^1\times_xy^2)\to\phi(y^1)\times_{\phi(x)}\phi(y^2)$ is an equivalence in $\sZ$.
    
    If $\sX$ has finite limits, and $\sX\subseteq\sY$ preserves these limits, then say that a functor $\phi : \sY\to\sZ$ is \emph{relatively left exact} (relative to $\sX$) if $\phi| : \sX \to \sZ$ is left exact, and $\phi$ preserves fibre products over $\sX$.
\end{definition}

The functorial nature of the construction of complete Segal spaces can be summarised as follows:

\begin{lemma}\label{lem:CSSfun}
    Let $g_*:\sX\to\sX'$ be a geometric morphism of topoi.
    Fix an $\sX$-distributor $\sY$ and an $\sX'$-distributor $\sY'$, and let $\phi:\sY\to\sY'$ be a functor such that
    \begin{itemize}
        \item $\phi$ extends $g_*$, in that $\phi|_\sX\simeq g_*$,
        \item $\phi$ commutes with cores, in that $\phi\kappa_\sX\simeq\kappa_{\sX'}\phi$, and
        \item $\phi$ preserves fibre products over $\sX$.
    \end{itemize}
    Then, $\phi_*:\Fun(\bDelta^\op,\sY)\to\Fun(\bDelta^\op,\sY')$ restricts to a functor $\phi_*:\CSS_\sX(\sY)\to\CSS_{\sX'}(\sY')$.
\end{lemma}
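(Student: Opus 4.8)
The plan is to show that $\phi_*$ carries a complete Segal space object $Y_\bullet \in \CSS_\sX(\sY)$ to a complete Segal space object in $\sY'$, by verifying in turn: (a) $\phi_* Y_\bullet$ lands in $\Cat_{\sX'}(\sY')$, i.e.\ it satisfies the Segal condition and $(\phi_* Y)_0 \in \sX'$; and (b) $\phi_* Y_\bullet$ is complete. By \cref{rem:CSS}, completeness is equivalent to $\Gp_\bullet(\phi_* Y)$ being essentially constant, so the crux is to understand how $\phi$ interacts with the functor $\Gp_\bullet$.

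First I would check the Segal condition. The Segal maps $Y_n \to Y_1 \times_{Y_0} \cdots \times_{Y_0} Y_1$ are equivalences in $\sY$; since $Y_0 \in \sX$ and $\phi$ preserves fibre products over $\sX$ (iterating over the finitely many pullbacks appearing), applying $\phi$ shows the Segal maps for $\phi_* Y_\bullet$ are equivalences in $\sY'$. For the object-condition: $(\phi_* Y)_0 = \phi(Y_0)$, and $Y_0 = \kappa_\sX Y_\bullet \in \sX$, so $\phi(Y_0) = \phi(\kappa_\sX Y_0)$. Using $\phi|_\sX \simeq g_*$ we get $\phi(Y_0) \simeq g_*(Y_0) \in \sX'$. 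Hence $\phi_* Y_\bullet \in \Cat_{\sX'}(\sY')$.

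Next, completeness. By \cref{rem:Gp}, $\Gp_\bullet Y \simeq (\kappa_{\sX,*} Y_\bullet)^\sim$, where $\kappa_{\sX,*}$ applies the core levelwise and $(-)^\sim$ is the right adjoint of \cref{lem:cores}. The hypothesis $\phi\kappa_\sX \simeq \kappa_{\sX'}\phi$ gives $\kappa_{\sX',*}(\phi_* Y_\bullet) \simeq \phi_*(\kappa_{\sX,*} Y_\bullet)$ levelwise; and since $\kappa_{\sX,*} Y_\bullet$ lands in $\Cat(\sX)$ (indeed in $\Grpd(\sX)$ after applying $(-)^\sim$) while $\phi|_\sX \simeq g_*$, we are reduced to comparing $(g_{*,*}-)^\sim$ with $g_{*,*}((-)^\sim)$ on $\Cat(\sX)$. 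Here the key point is that the core construction $(-)^\sim : \Cat(\sX) \to \Grpd(\sX)$ is built from finite limits (fibre products $\{x\}\times_{X_0} X_1 \times_{X_0}\{y\}$ and the full-subobject selection on the $d_i\sigma$ being isomorphisms), and $g_* = \phi|_\sX$ is left exact (as the direct image of a geometric morphism), so $g_*$ preserves the relevant finite-limit data and the selection of cells, giving a natural equivalence $g_{*,*}(X_\bullet^\sim) \simeq (g_{*,*} X_\bullet)^\sim$ for $X_\bullet \in \Cat(\sX)$. Combining, $\Gp_\bullet(\phi_* Y) \simeq g_{*,*}(\Gp_\bullet Y)$. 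Since $Y_\bullet$ is complete, $\Gp_\bullet Y$ is essentially constant, and applying the levelwise $g_*$ preserves essential constancy; thus $\Gp_\bullet(\phi_* Y)$ is essentially constant, so $\phi_* Y_\bullet$ is a complete Segal space.

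The main obstacle I anticipate is the compatibility $g_{*,*}((-)^\sim) \simeq (g_{*,*}-)^\sim$, i.e.\ that the core of a Segal space object commutes with a left-exact direct image functor. Rather than re-deriving it from the explicit $\sS$-level construction, I would argue via the general-$\sX$ case in the proof sketch of \cref{lem:cores}: $X_\bullet^\sim$ is characterised as the reflection of $X_\bullet$ into $\Grpd(\sX)$, and $\Grpd(\sX) \subseteq \Cat(\sX)$ is a strongly reflective localisation whose local objects are cut out by finite-limit (Segal/groupoid) conditions (\cref{rem:Grpd+Cat}); a left-exact functor $g_*$ preserves these conditions and preserves the maps along which one localises, hence intertwines the two reflectors. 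The remaining bookkeeping—naturality in the simplicial direction, and that all pullbacks invoked are genuinely over objects of $\sX$ so that \emph{preserves fibre products over $\sX$} applies—is routine.
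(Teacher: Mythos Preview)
Your overall plan is exactly the paper's: verify that $\phi_*$ lands in $\Cat_{\sX'}(\sY')$ using preservation of fibre products over $\sX$, then show $\Gp_\bullet(\phi_*Y)\simeq g_*\Gp_\bullet Y$ and invoke \cref{rem:CSS}. The reduction to the compatibility $(g_{*,*}X_\bullet)^\sim\simeq g_{*,*}(X_\bullet^\sim)$ is also the paper's.

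However, your justification of that compatibility has a genuine gap. First, $(-)^\sim$ is the \emph{right} adjoint to the inclusion $\Grpd(\sX)\subseteq\Cat(\sX)$ (a coreflector), not the reflector; so your second approach, which argues that $g_*$ ``intertwines the two reflectors'' because it preserves localising maps, is addressing the wrong adjoint. Second, the first approach does not go through as written: the core $X_\bullet^\sim$ is not built purely from finite limits. The step ``select the full subspace of $X_n$ on cells whose faces are isomorphisms in $hX$'' is not a finite-limit construction, so left exactness of $g_*$ alone does not force $g_*(X_\bullet^\sim)\simeq(g_*X_\bullet)^\sim$. Note also that left exactness of $g_*$ is automatic (it is a right adjoint), so your argument never uses the geometric-morphism hypothesis in an essential way.

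The paper fixes this by passing to the \emph{left} adjoint $g^*$. Since $g^*$ is left exact (this is where the geometric-morphism hypothesis enters), pointwise $g^*$ restricts to functors $\Cat(\sX')\to\Cat(\sX)$ and $\Grpd(\sX')\to\Grpd(\sX)$, yielding a commutative square of left adjoints with the fully faithful inclusions $\Grpd\hookrightarrow\Cat$ on the horizontals. Taking right adjoints of this square gives exactly $(g_*X_\bullet)^\sim\simeq g_*(X_\bullet^\sim)$. With this correction, the rest of your argument is the paper's proof.
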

\begin{remark}\label{rem:CSSfun}
    In the situation of \cref{lem:CSSfun}, $\phi_*:\CSS_\sX(\sY)\to\CSS_{\sX'}(\sY')$ acts pointwise on the underlying functors.
    Since limits of complete Segal spaces are computed pointwise, this ensures that $\phi_*$ preserves fibre products over $\sX$.
    Likewise, \cref{rem:CSSdist} ensures that $\phi_*$ commutes with cores as well.
    Therefore, \cref{lem:CSSfun} combined with \cref{prop:CSS} implies an \emph{endofunctorial} nature of the construction of complete Segal spaces.
\end{remark}
\begin{proof}[Proof of \cref{lem:CSSfun}.]
    Since $\phi$ is relatively left exact, it follows that $\phi_*$ restricts to a functor $\SS_\sX(\sY)\to\Cat_{\sX'}(\sY')$.
    Similarly, the left exactness of the left adjoint $g^*\dashv g_*$ induces a commutative square of left adjoints
    $$
    \begin{tikzcd}
        \Grpd(\sX') \ar[r, hook]\ar[d, "g^*"']
        & \Cat(\sX') \ar[d, "g^*"] \\
        \Grpd(\sX) \ar[r, hook]
        & \Cat(\sX)
    \end{tikzcd}
    $$
    Taking right adjoints implies for any $X_\bullet\in\Cat(\sX)$ that $(g_*X_\bullet)^\sim \simeq g_*(X_\bullet^\sim)$.
    It therefore follows for any $Y_\bullet\in\SS_\sX(\sY)$ that
    \[
        \Gp_\bullet(\phi_*Y) &\simeq ((\kappa_{\sX'}\circ\phi)_*Y)^\sim \tag{\cref{rem:Gp}} \\
            &\simeq ((\phi\circ\kappa_\sX)_*Y)^\sim \tag{$\phi$ commutes with cores} \\
            &\simeq ((g_*\circ\kappa_\sX)_*Y)^\sim \tag{$\phi$ extends $g_*$} \\
            &\simeq g_*(\kappa_{\sX,*}Y)^\sim \simeq g_*(\Gp_\bullet Y) \tag{\cref{rem:Gp}}
    \]
    In particular, if $Y_\bullet$ is a complete Segal space, then $\Gp_\bullet(\phi_*Y) \simeq g_*\Gp_\bullet Y$ is essentially constant, ensuring that $\phi_*Y_\bullet$ is a complete Segal space by \cref{rem:CSS}.
\end{proof}

We make the functoriality of the construction of complete Segal spaces precise by realising the construction as a functor on the categories of distributors defined in \cref{ssec:dist}.
For this, we need a preliminary lemma:

\begin{lemma}\label{lem:colim-stability}
    Fix a topos $\sX$, and let $\sY$ be an $\sX$-distributor.
    Choose $\lambda\gg0$ such that $\sY$ is locally $\lambda$-presentable and the core functor $\kappa_\sX : \sY\to\sX$ is $\lambda$-accessible.
    Then,
    \begin{enumerate}[label={(\roman*)}]
        \item\label{it:colim-stability}
            $\CSS_\sX(\sY)$ is stable under $\lambda$-filtered colimits in $\Fun(\bDelta^\op, \sY)$.
        \item\label{it:colim-exactness}
            $\lambda$-filtered colimits in $\CSS_\sX(\sY)$ commute with $\lambda$-small limits.
        \item\label{it:core-accessible}
            The core functor $\kappa_\sX : \CSS_\sX(\sY) \to \sX$ preserves $\lambda$-filtered colimits.
    \end{enumerate}
\end{lemma}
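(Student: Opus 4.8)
The plan is to prove (i) and to deduce (ii) and (iii) from it essentially formally. Granting (i), the $\lambda$-filtered colimits appearing in (ii) and (iii) are computed pointwise in $\sY$. Complete Segal spaces are by definition local objects (\cref{def:CSS}), hence closed under limits, and the inclusions $\CSS_\sX(\sY)\subseteq\Cat_\sX(\sY)\subseteq\Cat(\sY)\subseteq\Fun(\bDelta^\op,\sY)$ are all right adjoints by \cref{rem:Cat,rem:Grpd+Cat}, so $\lambda$-small limits in $\CSS_\sX(\sY)$ are also computed pointwise in $\sY$; since $\lambda$-filtered colimits commute with $\lambda$-small limits in the locally $\lambda$-presentable category $\sY$, they do so pointwise, hence in $\CSS_\sX(\sY)$, which gives (ii). For (iii), \cref{rem:CSSdist} identifies $\kappa_\sX Y$ with $Y_0$, and since the inclusion $\sX\subseteq\sY$ is left adjoint to $\kappa_\sX$ (\cref{def:distributor}) it preserves colimits, so $\sX$ is closed under colimits in $\sY$ and the $0$th term of a pointwise $\lambda$-filtered colimit of complete Segal spaces is the colimit in $\sX$ of the $0$th terms.

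For (i), I would first check that $\Cat_\sX(\sY)$ is closed under $\lambda$-filtered colimits in $\Fun(\bDelta^\op,\sY)$: the Segal condition is a condition of finite limits, which commute with $\lambda$-filtered colimits in $\sY$, and the condition $Y_0\in\sX$ is preserved because $\sX$ is closed under colimits in $\sY$; hence $\lambda$-filtered colimits in $\Cat_\sX(\sY)$, and likewise in $\Cat(\sX)$, are pointwise. By \cref{rem:CSS}, $Y_\bullet\in\Cat_\sX(\sY)$ is complete exactly when $\Gp_\bullet Y\simeq(\kappa_{\sX,*}Y_\bullet)^\sim$ (\cref{rem:Gp}) lies in the essential image of the fully faithful diagonal $\delta:\sX\hookrightarrow\Grpd(\sX)$. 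Since $\delta$ is left adjoint to $G_\bullet\mapsto G_0$ (which is its right adjoint, $[0]$ being initial in $\bDelta^\op$), $\delta$ preserves colimits and $\delta(\sX)$ is closed under colimits in $\Grpd(\sX)$; so it is enough to show that $\Gp:\Cat_\sX(\sY)\to\Grpd(\sX)$ preserves $\lambda$-filtered colimits. It factors as $\kappa_{\sX,*}:\Cat_\sX(\sY)\to\Cat(\sX)$ — the pointwise application of the $\lambda$-accessible functor $\kappa_\sX$, hence itself $\lambda$-accessible since the colimits on both sides are pointwise — followed by $(-)^\sim:\Cat(\sX)\to\Grpd(\sX)$, and I claim the latter preserves all filtered colimits: each $X_n^\sim$ is a finite limit of the objects $X_m$ (the finitary description of the spaces of equivalences underlying the construction in the proof of \cref{lem:cores}; see also \cite{rezk:css}), and finite limits commute with filtered colimits in $\sX$. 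Then for a $\lambda$-filtered diagram $(Y^{(\alpha)})$ of complete Segal spaces with colimit $Y$ taken in $\Cat_\sX(\sY)$ (so pointwise in $\sY$), $\Gp_\bullet Y\simeq\varcolim_\alpha\Gp_\bullet Y^{(\alpha)}$ is a $\lambda$-filtered colimit of objects of $\delta(\sX)$, which therefore lies in $\delta(\sX)$, so $Y$ is complete; this proves (i).

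I expect the only genuinely delicate point to be the claim that $(-)^\sim$ preserves filtered colimits — equivalently, that invertibility of morphisms and cells in a Segal object is detected by finitely much data — which forces one to unwind the explicit construction behind \cref{lem:cores} rather than argue abstractly, since a right adjoint between presentable categories is a priori only accessible for \emph{some} uncontrolled cardinal, possibly larger than $\lambda$. Everything else is bookkeeping with the adjunctions of \cref{def:distributor,rem:Cat,rem:CSSdist} and the standard fact that $\lambda$-filtered colimits commute with $\lambda$-small limits in any locally $\lambda$-presentable category.
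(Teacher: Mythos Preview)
Your proposal is correct and spells out precisely the argument that the paper defers to \cite[Proposition 1.2.29]{lurie:infty2}: closure of $\Cat_\sX(\sY)$ under $\lambda$-filtered colimits via the finite-limit nature of the Segal condition, $\lambda$-accessibility of $\Gp$ via the factorisation $\kappa_{\sX,*}$ followed by $(-)^\sim$ together with the finite-limit formula $X_1^\sim \simeq X_3\times_{X_1\times X_1}(X_0\times X_0)$ for the groupoid core, and then the formal deductions of (ii) and (iii) from (i). The one point you flag as delicate---that $(-)^\sim$ is finitary---is exactly the content Lurie supplies in his proof, so your argument is the same as the one the paper invokes by citation.
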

\begin{proof}
    This follows the proof of \cite[Proposition 1.2.29]{lurie:infty2} \emph{mutatis mutandis}, noting that $\lambda$-filtered colimits commute with $\lambda$-small limits in any locally $\lambda$-presentable category.
\end{proof}

\begin{proposition}\label{prop:CSSfun}
    For a topos $\sX$, the construction of complete Segal spaces on $\sX$-distributors defines a functor $\CSS_\sX : \Dist_\sX^R \to \Dist_\sX^R$, where the functor $\phi_*:\CSS_\sX(\sY)\to\CSS_\sX(\sZ)$ induced by any $\phi:\sY\to\sZ$ in $\Dist_\sX^R$ acts pointwise.
\end{proposition}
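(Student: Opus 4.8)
The plan is to assemble the earlier results. By \cref{prop:CSS}, $\CSS_\sX(\sY)$ is an $\sX$-distributor for every $\sX$-distributor $\sY$, so $\CSS_\sX$ is already defined on objects, and the content lies in checking it on morphisms and establishing functoriality. Given a morphism $\phi : \sY\to\sZ$ in $\Dist_\sX^R$, I would first verify that $\phi$ satisfies the hypotheses of \cref{lem:CSSfun} with $g_* = \id_\sX$: as a morphism of $(\hat{\Cat_\infty})_{\sX/}$, whose structure maps are the full-subcategory inclusions of $\sX$, $\phi$ restricts to the identity on $\sX$ and hence extends $\id_\sX$; condition \ref{R:kappa} together with $\phi|_\sX\simeq\id_\sX$ gives $\phi\kappa_\sX\simeq\kappa_\sX\phi$ as endofunctors (both sides send $y$ to the image in $\sZ$ of $\kappa_\sX y\in\sX$); and condition \ref{R:cts} makes $\phi$ preserve all limits, in particular all fibre products over $\sX$. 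Hence \cref{lem:CSSfun} guarantees that $\phi_* : \Fun(\bDelta^\op,\sY)\to\Fun(\bDelta^\op,\sZ)$ restricts to a functor $\phi_* : \CSS_\sX(\sY)\to\CSS_\sX(\sZ)$.

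Next I would check that $\phi_*$ is itself a morphism of $\Dist_\sX^R$. It lies under $\sX$: the diagonal $\sX\subseteq\CSS_\sX(\sY)$ sends $x$ to the constant simplicial object, and $\phi_*$ acts levelwise by $\phi$, with $\phi(x)\simeq x$. For condition \ref{R:kappa} one may cite \cref{rem:CSSfun}, or argue directly using \cref{rem:CSSdist}: the core of a complete Segal space is $Y\mapsto Y_0$, so $\kappa_\sX(\phi_*Y) \simeq \phi(Y_0)\simeq Y_0\simeq\kappa_\sX Y$ since $Y_0\in\sX$. For condition \ref{R:cts}: limits of complete Segal spaces are computed levelwise (as recalled in \cref{rem:CSSfun}), so $\phi_*$ preserves them because $\phi$ does; and for $\lambda$-filtered colimits, I would fix a regular cardinal $\lambda$ large enough that $\sY$ and $\sZ$ are locally $\lambda$-presentable, both core functors are $\lambda$-accessible, and $\phi$ is $\lambda$-accessible (the last by \ref{R:cts}), so that by part \ref{it:colim-stability} of \cref{lem:colim-stability} the $\lambda$-filtered colimits in both $\CSS_\sX(\sY)$ and $\CSS_\sX(\sZ)$ are computed levelwise, whence $\phi_*$ preserves them.

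For functoriality, the assignment $\sY\mapsto\Fun(\bDelta^\op,\sY)$, $\phi\mapsto\phi_*$ is the composite of the forgetful functor $\Dist_\sX^R\to\hat{\Cat_\infty}$ with post-composition along $\id_{\bDelta^\op}$, hence a functor; the full subcategories $\CSS_\sX(\sY)$ are stable under every transition functor $\phi_*$ by the above, so they assemble into a subfunctor $\CSS_\sX(-)\subseteq\Fun(\bDelta^\op,-)$ (a levelwise full subcategory closed under all transition functors of a functor to $\hat{\Cat_\infty}$ determines a subfunctor), and this subfunctor takes values in $\Dist_\sX^R$ with the induced morphisms $\phi_*$ acting levelwise, which is exactly the assertion.

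I expect the only genuinely delicate point to be the cardinal bookkeeping in verifying \ref{R:cts} for $\phi_*$: one must produce a single regular cardinal $\lambda$ dominating the presentability ranks of $\sY$ and $\sZ$, the accessibility ranks of their two core functors, and the accessibility rank of $\phi$, so that $\lambda$-filtered colimits are simultaneously levelwise in $\CSS_\sX(\sY)$ and $\CSS_\sX(\sZ)$. Everything else reduces cleanly to \cref{prop:CSS}, \cref{lem:CSSfun} and \cref{lem:colim-stability}, and the coherence of $\CSS_\sX$ as a genuine functor of $\infty$-categories, beyond its description on objects and morphisms, is the routine subfunctor observation and requires no further input.
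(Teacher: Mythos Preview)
Your proposal is correct and follows essentially the same approach as the paper: invoke \cref{lem:CSSfun} to see that $\phi_*$ restricts to complete Segal spaces, then use \cref{lem:colim-stability} with a sufficiently large $\lambda$ to verify that $\phi_*$ satisfies \ref{R:cts}, with core preservation and limit preservation being immediate from the pointwise description. The paper's proof is more terse and does not spell out the verification of the hypotheses of \cref{lem:CSSfun} or the subfunctor argument for coherent functoriality, but your additional detail on these points is sound and the cardinal bookkeeping you flag is exactly the one nontrivial step.
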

\begin{proof}
    Let $\phi:\sY\to\sZ$ be a morphism of $\Dist_\sX^R$.
    Then, \cref{lem:CSSfun} implies that pointwise application of $\phi$ defines a functor $\phi_*:\CSS_\sX(\sY)\to\CSS_\sX(\sZ)$.
    Moreover, this functor preserves cores and small limits.
    
    Choose $\lambda\gg0$ so that $\phi:\sY\to\sZ$ is a $\lambda$-accessible functor between locally $\lambda$-presentable categories.
    Then, \cref{lem:colim-stability} implies that $\lambda$-filtered colimits in $\CSS_\sX(\sY)$ and $\CSS_\sX(\sZ)$ are computed pointwise, and are thus preserved by $\phi_*$.
    Therefore, $\phi_*$ is indeed a morphism of $\Dist_\sX^R$.
\end{proof}
\begin{remark}\label{rem:CSSLfun}
    By \cref{rem:dist-cats}, the construction of complete Segal spaces on $\sX$-distributors also defines an endofunctor on $\Dist_\sX^L$.
    However, the functor $\psi_! : \CSS_\sX(\sZ)\to\CSS_\sX(\sY)$ induced by $\psi:\sZ\to\sY$ does not act pointwise; rather, if $\Seg_\sX : \Fun(\bDelta^\op, \sY) \to \CSS_\sX(\sY)$ denotes a left adjoint to the inclusion, then $\psi_!$ is given on $Z_\bullet\in\CSS_\sX(\sZ)$ by $(\psi_!Z)_\bullet \simeq \Seg_\sX(\psi_*Z_\bullet)$.
\end{remark}

\begin{theorem}\label{thm:CSS-R-cts}
    For a topos $\sX$, the functor $\CSS_\sX : \Dist_\sX^R \to \Dist_\sX^R$ preserves weakly contractible limits.
\end{theorem}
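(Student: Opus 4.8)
The plan is to reduce the statement to a computation on underlying $(\infty,1)$-categories. Fix a weakly contractible simplicial set $K$ and a diagram $\sY_\bullet:K\to\Dist_\sX^R$, and let $\sY:=\varlim_k\sY_k$ be its limit, which exists by \cref{prop:distRlimits}. By \cref{rem:completeness}, this limit is computed on underlying categories (in $\Pres_\infty^R$, hence in $\hat{\Cat_\infty}$), and the projections $p_k:\sY\to\sY_k$ are core-preserving right adjoints. The same applies to the diagram $k\mapsto\CSS_\sX(\sY_k)$, whose transition maps are morphisms of $\Dist_\sX^R$ by \cref{prop:CSSfun} and hence right adjoints. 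Since $\CSS_\sX(\sY)$ is again an $\sX$-distributor by \cref{prop:CSS}, and a morphism of $\Dist_\sX^R$ is an equivalence as soon as its underlying functor is, it suffices to show that the canonical comparison $\CSS_\sX(\sY)\to\varlim_k\CSS_\sX(\sY_k)$ — induced by applying the $(p_k)_*$ pointwise — is an equivalence of underlying categories. As both sides are full subcategories of $\Fun(\bDelta^\op,\sY)\simeq\varlim_k\Fun(\bDelta^\op,\sY_k)$ (functor categories commute with limits), it is enough to check that they contain the same objects.

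First I would handle the Segal-space layer. Using the fibre-product description $\Cat_\sX(\sY)\simeq\Cat(\sY)\times_\sX\sY$ of \cref{rem:Cat}, together with the facts that $\Fun(\bDelta^\op,-)$, fibre products, and the constant diagram on $\sX$ all commute with the limit over $K$ — the last of these precisely because $K$ is weakly contractible, so $\varlim_k\sX\simeq\sX$ — one obtains $\varlim_k\Cat_\sX(\sY_k)\simeq\Cat_\sX(\sY)$. Concretely: the Segal condition on $Y_\bullet\in\Fun(\bDelta^\op,\sY)$ involves only finite limits, which in $\sY=\varlim_k\sY_k$ are computed componentwise since the $p_k$ are right adjoints, and equivalences in $\sY$ are detected componentwise; and $Y_0$ lies in $\sX\subseteq\sY$ iff its image in each $\sY_k$ lies in $\sX$, because the $p_k$ are core-preserving and restrict to $\id_\sX$. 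Thus $Y_\bullet$ lies in $\Cat_\sX(\sY)$ iff each $(p_k)_*Y_\bullet$ lies in $\Cat_\sX(\sY_k)$.

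Then I would add the Rezk-completeness condition. For $Y_\bullet\in\Cat_\sX(\sY)$, apply the computation carried out in the proof of \cref{lem:CSSfun} to the transition functor $p_k$: it is core-preserving and satisfies $p_k|_\sX\simeq\id_\sX$ (being a morphism of $\Dist_\sX^R$, cf.\ \cref{rem:dist-cats}), so $\Gp_\bullet((p_k)_*Y)\simeq\Gp_\bullet(Y)$ in $\Grpd(\sX)$. By \cref{rem:CSS}, $Y_\bullet$ is a complete Segal space iff $\Gp_\bullet Y$ is essentially constant; since $K$ is weakly contractible, hence nonempty, this is equivalent to $\Gp_\bullet((p_k)_*Y)$ being essentially constant for every $k\in K$, i.e.\ to each $(p_k)_*Y_\bullet$ being a complete Segal space in $\sY_k$. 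Hence $\CSS_\sX(\sY)$ and $\varlim_k\CSS_\sX(\sY_k)$ have the same objects inside $\Fun(\bDelta^\op,\sY)$, so the comparison functor is an equivalence and the theorem follows.

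I expect the main obstacle to be the bookkeeping in the first two paragraphs: checking carefully that the limit of the $\sY_k$ in $\Dist_\sX^R$ genuinely agrees with the limit of underlying categories (so that the componentwise arguments are legitimate), and that each of the relevant limit constructions — functor categories, fibre products, and the constant diagram on $\sX$ — commutes past the limit over $K$. This is also exactly where weak contractibility of $K$ is essential and unavoidable: the empty limit in $\Dist_\sX^R$ is the terminal object $\sX$, whereas $\CSS_\sX(\sX)$ is Rezk's category of complete Segal space objects in $\sX$, which is not equivalent to $\sX$.
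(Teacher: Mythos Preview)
Your proposal is correct and follows essentially the same approach as the paper: reduce to the underlying categories via \cref{rem:completeness}, identify both sides as full subcategories of $\Fun(\bDelta^\op,\sY)$, verify the Segal condition and the constraint $Y_0\in\sX$ componentwise using that the projections are limit- and core-preserving, and then handle Rezk-completeness by invoking the $\Gp_\bullet$ computation from the proof of \cref{lem:CSSfun} together with \cref{rem:CSS} and the nonemptiness of $K$. The only cosmetic difference is that you also sketch the alternative route through the fibre-product description of \cref{rem:Cat}, whereas the paper argues the Segal condition directly.
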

\begin{proof}
    Let $\sY_\bullet : K \to \Dist_\sX^R$ be a diagram indexed by a weakly contractible simplicial set $K$, and let $\sY := \varlim_k\sY_k$, with canonical projection maps $\varpi_k:\sY\to\sY_k$.
    Consider the commutative square
    $$
    \begin{tikzcd}
        \CSS_\sX(\sY) \ar[r]\ar[d, hook]
        & \varlim_k\CSS_\sX(\sY_k) \ar[d, hook] \\
        \Fun(\bDelta^\op, \sY) \ar[r, "\sim"']
        & \varlim_k\Fun(\bDelta^\op, \sY_k)
    \end{tikzcd}
    $$
    where the limits are computed in $\hat{\Cat_\infty}$.
    By \cref{rem:completeness}, the limit $\sY = \varlim_k\sY_k$ can be computed on underlying categories in $\hat{\Cat_\infty}$, implying that the bottom arrow in the above diagram is an equivalence.
    \Cref{rem:completeness} also implies that it suffices to show that the top arrow in the above diagram is an equivalence as well.
    As all other functors are fully faithful, the top arrow is fully faithful.
    Therefore, it remains to show that the top arrow is essentially surjective.
    
    From the commutative diagram, we can identify $\varlim_k\CSS_\sX(\sY_k)$ with  the full subcategory of $\Fun(\bDelta^\op, \sY)$ spanned by those $Y:\bDelta^\op\to\sY$ for which $\varpi_kY : \bDelta^\op \to \sY_k$ lies in $\CSS_\sX(\sY_k)$ for every $k\in K$.
    It suffices to show that any such functor $Y:\bDelta^\op\to\sY$ is already a complete Segal space in $\sY$.
    
    Certainly $Y_0\in\sX$, since $\varpi_kY_0\in\sX$ for all $k\in K$.
    Since $\varpi_k$ is a morphism of $\Dist_\sX^R$, it preserves limits.
    In particular, for $n\geq0$, the canonical map
    $$
    Y_n \to \underbrace{Y_1\times_{Y_0}\dots\times_{Y_0}Y_1}_n
    $$
    in $\sY$ descends via $\varpi_k$ to an equivalence in $\sY_k$ for every $k\in K$, and is therefore also an equivalence.
    In particular, $Y_\bullet\in\SS_\sX(\sY)$.
    
    As $K$ is weakly contractible, it is nonempty, so fix some $k_0\in K$.
    Since $\varpi_{k_0}$ preserves cores and limits, the proof of \cref{lem:CSSfun} implies that $\Gp_\bullet(\varpi_{k_0,*}Y_\bullet) \simeq \Gp_\bullet Y$.
    Since $\Gp_\bullet(\varpi_{k_0,*}Y_\bullet)$ is essentially constant, this proves that $Y_\bullet$ is a complete Segal space, as desired.
\end{proof}

An analogue to \cref{thm:CSS-R-cts} for colimits is likely untrue in general.

\subsection{Distributors of sheaves of higher categories}
\label{ssec:univ}

As in \cref{sec:sheaves}, fix a topos $\sX$, presented as a left exact accessible localisation of $\sP(\sC)$ for some small category $\sC$, where the localisation functor is denoted $(-)^\#:\sP(\sC)\to\sX$.
In this section, we aim to use the language of distributors and complete Segal spaces to further study the categories of sheaves of $(n, r)$-categories over $\sC$, particularly when $n=\infty$.

\begin{lemma}\label{lem:compare}
    Let $\sY$ be an $\sX$-distributor.
    Then, $Y_\bullet\in\SS_\sX(\sY)$ is a complete Segal space if and only if it is local with respect to $|\bI|_U^\#\to\Delta_U[0]$ for every $U\in\sC$.
\end{lemma}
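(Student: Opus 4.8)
The plan is to compare the distributor-theoretic notion of completeness from \cref{def:CSS} with $|\bI|_U^\#$-locality by pushing both conditions through the corepresentable functors $\Map_\sY(h_U^\#,-)$, thereby reducing the statement to Rezk's classical characterisation of complete Segal spaces \cite{rezk:css}. First I would write $\operatorname{ev}_U\colon\Cat_\sX(\sY)\to\sP(\bDelta)$ for the functor $Y_\bullet\mapsto\bigl([n]\mapsto\Map_\sY(h_U^\#,Y_n)\bigr)$. Since $\Cat_\sX(\sY)$ is closed under limits in $\Fun(\bDelta^\op,\sY)$, which are computed pointwise, and $\Map_\sY(h_U^\#,-)$ preserves limits, $\operatorname{ev}_U$ preserves limits; it is moreover readily seen to be accessible, and so it admits a left adjoint which, generalising \cref{def:loc-realisation}, is the realisation $|{-}|_U^\#\colon\sP(\bDelta)\to\Cat_\sX(\sY)$, with $\Delta_U[0]:=|\Delta[0]|_U^\#$ (one checks this is the constant object $\const_{h_U^\#}$). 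By this adjunction, $Y_\bullet$ is local with respect to $|\bI|_U^\#\to\Delta_U[0]$ if and only if $\operatorname{ev}_U Y_\bullet$ is local with respect to $\bI\to\Delta[0]$ in $\sP(\bDelta)$; and since $Y_\bullet$ satisfies the Segal condition and $\Map_\sY(h_U^\#,-)$ preserves the relevant pullbacks, $\operatorname{ev}_U Y_\bullet$ is a Segal space, so by \cite{rezk:css} the condition that $\operatorname{ev}_U Y_\bullet$ be local with respect to $\bI\to\Delta[0]$ is equivalent to $(\operatorname{ev}_U Y_\bullet)^\sim$ being essentially constant.

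The second step is to identify $(\operatorname{ev}_U Y_\bullet)^\sim$ with $\Map_\sX(h_U^\#,\Gp_\bullet Y)$. By \cref{rem:Gp} we have $\Gp_\bullet Y\simeq(\kappa_{\sX,*}Y_\bullet)^\sim$, and the proof of \cref{lem:cores} characterises the core $X_\bullet^\sim$ of $X_\bullet\in\Cat(\sX)$ by the equivalence $j(X_\bullet^\sim)\simeq(jX_\bullet)^\sim$, where $j\colon\sX\hookrightarrow\sP(\sX)$ is the Yoneda embedding and the core on $\Cat(\sP(\sX))\simeq\Fun(\sX^\op,\Cat(\sS))$ is computed levelwise. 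Evaluating this equivalence at $h_U^\#$ gives $\Map_\sX(h_U^\#,X_\bullet^\sim)\simeq\bigl(\Map_\sX(h_U^\#,X_\bullet)\bigr)^\sim$; applying it to $X_\bullet=\kappa_{\sX,*}Y_\bullet$ and using $\Map_\sX(h_U^\#,\kappa_\sX Y_n)\simeq\Map_\sY(h_U^\#,Y_n)$ yields the required identification $\Map_\sX(h_U^\#,\Gp_\bullet Y)\simeq(\operatorname{ev}_U Y_\bullet)^\sim$ of simplicial spaces.

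Finally I would chain the equivalences together. A groupoid object in a topos is essentially constant precisely when its degeneracy $G_0\to G_1$ is an equivalence, and the representables $h_U^\#$ ($U\in\sC$) jointly detect equivalences in $\sX$; hence $\Gp_\bullet Y$ is essentially constant if and only if $\Map_\sX(h_U^\#,\Gp_\bullet Y)\simeq(\operatorname{ev}_U Y_\bullet)^\sim$ is essentially constant for every $U\in\sC$, which by the first step is equivalent to $Y_\bullet$ being local with respect to $|\bI|_U^\#\to\Delta_U[0]$ for every $U$. Since \cref{rem:CSS} identifies the complete Segal spaces in $\sY$ with those $Y_\bullet\in\Cat_\sX(\sY)$ for which $\Gp_\bullet Y$ is essentially constant, this proves the lemma. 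I expect the only delicate point to be the second step — verifying that forming the core $(-)^\sim$ commutes with the pointwise mapping functors $\Map_\sX(h_U^\#,-)$ — which rests on the $\sP(\sX)$-based construction of the core in \cref{lem:cores}; the remaining steps are formal manipulations of adjunctions together with the cited fact about Segal spaces of spaces.
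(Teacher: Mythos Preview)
Your argument is correct, and shares with the paper the reduction via \cref{rem:CSS} to the statement that $\Gp_\bullet Y$ is essentially constant, checked against the representables $h_U^\#$. The route you take to connect $|\bI|_U^\#$-locality to this condition, however, differs from the paper's. You push $Y_\bullet$ through $\operatorname{ev}_U$ into $\sP(\bDelta)$, invoke Rezk's classical characterisation of completeness there, and then verify that $(-)^\sim$ commutes with $\Map_\sX(h_U^\#,-)$ using the presheaf description of the core in \cref{lem:cores}. The paper instead stays inside $\Cat_\sX(\sY)$ and writes down a single chain of adjunction isomorphisms: since $|\bI|_U^\#\in\Grpd(\sX)$, the $\Gp$-adjunction of \cref{cor:Gp} gives $\Hom_{\Cat_\sX(\sY)}(|\bI|_U^\#,Y_\bullet)\simeq\Hom_{\Grpd(\sX)}(|\bI|_U^\#,\Gp_\bullet Y)$, and the fact that $\bI$ is the groupoidification of $\Delta[1]$ converts this to $\Hom(\Delta_U[1],\Gp_\bullet Y)\simeq\Gp_1Y(U)$. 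Your approach makes the link to Rezk's original completeness condition transparent and isolates exactly where the construction of $(-)^\sim$ enters; the paper's approach is more self-contained, needing only the adjunctions already set up, and sidesteps the compatibility check you flagged as delicate. One small imprecision: the left adjoint to your $\operatorname{ev}_U$ is really $|{-}|_U^\#$ followed by reflection into $\Cat_\sX(\sY)$, but since $|\bI|_U^\#$ and $\Delta_U[0]$ already lie in $\Grpd(\sX)\subseteq\Cat_\sX(\sY)$ this does not affect the argument.
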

\begin{proof}
    Since $\Gp_\bullet Y\in\Grpd(\sX)$ satisfies the Segal condition, it is essentially constant if and only if the map $\Gp_1Y\to\Gp_0Y$ is an equivalence, which is equivalent to asserting that the underlying functor $\Gp_\bullet Y : \bDelta^\op\to\sX$ is local with respect to $\Delta_U[1]\to\Delta_U[0]$ for every $U\in\sC$.
    Now, note that
    \[
        \Hom_{\Fun(\bDelta^\op,\sX)}(\Delta_U[1], \Gp_\bullet Y) &\simeq \Hom_{\sP(\bDelta)}(\Delta[1], \Gp_\bullet Y(U)) \tag{\cref{def:poly-sheaf}} \\
            &\simeq \Hom_{\Cat(\sS)}(\Delta[1], \Gp_\bullet Y(U)) \tag{both $\Delta[1]$ and $\Gp_\bullet Y(U)$ satisfy the Segal condition} \\
            &\simeq \Hom_{\Grpd(\sS)}(\bI, \Gp_\bullet Y(U)) \tag{$\bI$ is the groupoidification of $\Delta[1]$} \\
            &\simeq \Hom_{\sP(\bDelta)}(\bI, \Gp_\bullet Y(U)) \tag{$\Grpd(\sS)\subseteq\sP(\bDelta)$ is fully faithful} \\
            &\simeq \Hom_{\Fun(\bDelta^\op,\sX)}(|\bI|_U^\#, \Gp_\bullet Y) \tag{\cref{def:loc-realisation}} \\
            &\simeq \Hom_{\Grpd(\sX)}(|\bI|_U^\#, \Gp_\bullet Y) \tag{both $|\bI|_U^\#$ and $\Gp_\bullet Y$ are groupoid objects in $\sX$} \\
            &\simeq \Hom_{\SS_\sX(\sY)}(|\bI|_U^\#, Y_\bullet) \tag{\cref{cor:Gp}}
    \]
    and similarly for $\Delta_U[0]$.
    Therefore, $\Gp_\bullet Y$ is essentially constant if and only if $Y_\bullet$ is local with respect to $|\bI|_U^\#\to\Delta_U[0]$ for every $U\in\sC$, as desired.
\end{proof}
\begin{corollary}\label{cor:compare}
    The category $\Sh_{(\infty, r)}(\sC)$ is an $\sX$-distributor for every $0\leq r<\infty$.
    Moreover, we have a canonical equivalence $\Sh_{(\infty, r+1)}(\sC) \simeq \CSS_\sX(\Sh_{(\infty, r)}(\sC))$.
\end{corollary}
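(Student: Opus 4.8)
The plan is to argue by induction on $r$, carrying along the slightly strengthened hypothesis that $\Sh_{(\infty, r)}(\sC)$ is an $\sX$-distributor whose full subcategory of ``spaces'' is exactly $\Sh_{(\infty, 0)}(\sC)$, i.e.\ the essentially constant polysimplicial sheaves (which by \cref{lem:truncate} is a copy of $\sX$). The base case $r = 0$ is immediate: \cref{lem:truncate} identifies $\Sh_{(\infty, 0)}(\sC)$ with the essentially constant polysimplicial sheaves, hence with $\sX$, and $\sX$ is an $\sX$-distributor over itself by \cref{rem:topos}, with its entire category serving as the subcategory of spaces.

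For the inductive step, write $\sY := \Sh_{(\infty, r)}(\sC)$ and assume it is an $\sX$-distributor whose spaces are $\Sh_{(\infty, 0)}(\sC)$. First I would invoke \cref{rem:infty-sheaf-recurse} (this is where $n = \infty$ is essential), which presents $\Sh_{(\infty, r+1)}(\sC)$ as the full subcategory of $\Fun(\bDelta^\op, \sY)$ spanned by those $F$ with $F_0$ essentially constant, $F$ satisfying the Segal condition, and $F$ Rezk-complete. I then match these three conditions against the defining conditions of $\CSS_\sX(\sY)$, all inside $\Fun(\bDelta^\op, \sY)$: the Segal condition is literally the Segal condition of \cref{def:Grpd+Cat} cutting out $\Cat(\sY)$; the condition that $F_0$ be essentially constant is, by the inductive identification of the spaces of $\sY$ with $\Sh_{(\infty, 0)}(\sC)$, precisely the condition $F_0 \in \sX$ of \cref{def:Cat}, so jointly these carve out $\Cat_\sX(\sY)$; and for $F \in \Cat_\sX(\sY)$, \cref{lem:compare} says that Rezk-completeness, namely locality against $|\bI|_U^\# \to \Delta_U[0]$ for every $U \in \sC$, is equivalent to $F$ being a complete Segal space. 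Hence $\Sh_{(\infty, r+1)}(\sC) \simeq \CSS_\sX(\sY)$ as full subcategories of $\Fun(\bDelta^\op, \sY)$, which is the asserted equivalence.

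Finally, \cref{prop:CSS} shows that $\CSS_\sX(\sY)$, and therefore $\Sh_{(\infty, r+1)}(\sC)$, is an $\sX$-distributor. To close the induction I would check that its subcategory of spaces, namely the diagonal image of $\sX$ inside $\CSS_\sX(\sY)$, corresponds under the uncurrying embedding of \cref{def:uncurry} to the constant simplicial objects on essentially constant polysimplicial sheaves, which by \cref{lem:truncate} is again $\Sh_{(\infty, 0)}(\sC)$. I expect the only delicate point to be exactly this bookkeeping: verifying that the subcategory of spaces produced by the inductively constructed distributor structure on $\Sh_{(\infty, r)}(\sC)$ agrees, after the uncurrying identifications, with the essentially constant polysimplicial sheaves, so that condition \ref{S:obj} of \cref{rem:infty-sheaf-recurse} and the condition $Y_0 \in \sX$ of \cref{def:Cat} really do coincide. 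Everything else is a direct comparison of definitions together with the already-proved \cref{lem:compare}.
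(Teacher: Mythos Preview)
Your proposal is correct and follows essentially the same approach as the paper: induction on $r$, with base case $\Sh_{(\infty,0)}(\sC)\simeq\sX$, and inductive step combining \cref{lem:compare} with \cref{prop:infty-sheaf} (you cite its specialisation \cref{rem:infty-sheaf-recurse}) and \cref{prop:CSS}. The paper's proof is terser and leaves implicit the bookkeeping you spell out about the subcategory of spaces, but the logical structure is identical.
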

\begin{proof}
    This is clear if $r = 0$, since $\Sh_{(\infty, 0)}(\sC) \simeq \sX$.
    The result for $r > 0$ follows by induction from combining \cref{lem:compare} with \cref{prop:infty-sheaf}.
\end{proof}

That iterated application of $\CSS_\sX$ to $\sX$ yields sheaves of $(\infty, r)$-categories was already suggested in \cite[Variant 1.3.8]{lurie:infty2}.
We are more interested in the case $r = \infty$.
From the inductive formula in \cref{cor:compare}, we might expect that $\Sh_{(\infty, \infty)}(\sC)$ is an $\sX$-distributor with a canonical equivalence $\Sh_{(\infty, \infty)}(\sC)\simeq\CSS_\sX(\Sh_{(\infty, \infty)}(\sC))$.
In fact, more is true:

\begin{theorem}\label{thm:CSS-terminal}
    Let $\mathbf{Fix}_{\CSS}^R$ denote the category whose objects are $\sX$-distributors $\sY$ with an equivalence $\sY\xrightarrow\sim\CSS_\sX(\sY)$, and whose morphisms are the morphisms of $\Dist_\sX^R$ that commute with these equivalences.
    Then, the terminal object of $\mathbf{Fix}_{\CSS}^R$ is equivalent to $\Sh_{(\infty, \infty)}(\sC)$.
\end{theorem}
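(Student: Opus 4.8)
The plan is to show two things: that $\Sh_{(\infty,\infty)}(\sC)$ is an object of $\mathbf{Fix}_{\CSS}^R$, and that it is the terminal one. For membership I would combine \cref{prop:kappa-limit} with \cref{cor:compare}. Each transition map $\kappa_{\leq r}\colon\Sh_{(\infty,r+1)}(\sC)\to\Sh_{(\infty,r)}(\sC)$ is a right adjoint (by \cref{prop:kappa}), hence limit-preserving and accessible, and it preserves cores since it leaves $X_0$ unchanged; so it is a morphism of $\Dist_\sX^R$. As $\N^\op$ is weakly contractible, \cref{rem:completeness} shows that the limit of the tower $\dots\to\Sh_{(\infty,1)}(\sC)\to\Sh_{(\infty,0)}(\sC)$ in $\Dist_\sX^R$ is computed on underlying categories, and so agrees with the limit $\Sh_{(\infty,\infty)}(\sC)$ of \cref{prop:kappa-limit}. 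Applying \cref{thm:CSS-R-cts} and then \cref{cor:compare} levelwise then yields
$$
\CSS_\sX\big(\Sh_{(\infty,\infty)}(\sC)\big)\;\simeq\;\varlim_r\CSS_\sX\big(\Sh_{(\infty,r)}(\sC)\big)\;\simeq\;\varlim_r\Sh_{(\infty,r+1)}(\sC)\;\simeq\;\Sh_{(\infty,\infty)}(\sC),
$$
the last equivalence by cofinality of the shifted tower; this supplies the fixed-point equivalence.

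For terminality, I would first observe that iterating the equivalences of \cref{cor:compare} identifies the tower $\big(\Sh_{(\infty,r)}(\sC),\,\kappa_{\leq r}\big)$ with the tower $\big(\CSS_\sX^r(\sX),\,\CSS_\sX^r(\kappa_\sX)\big)$ obtained by repeatedly applying the endofunctor $\CSS_\sX$ to the unique morphism $\kappa_\sX\colon\CSS_\sX(\sX)\to\sX$ of $\Dist_\sX^R$ (recalling that $\sX$ is terminal in $\Dist_\sX^R$ by \cref{rem:completeness}); thus $\Sh_{(\infty,\infty)}(\sC)$ is the limit of the \emph{terminal $\CSS_\sX$-sequence}. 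Given $(\sY,e)\in\mathbf{Fix}_{\CSS}^R$ with $e\colon\sY\xrightarrow{\sim}\CSS_\sX(\sY)$ in $\Dist_\sX^R$, I would build morphisms $F_r\colon\sY\to\CSS_\sX^r(\sX)$ of $\Dist_\sX^R$ by the recursion $F_0:=\kappa_\sX$ and $F_{r+1}:=\CSS_\sX(F_r)\circ e$, and verify $\CSS_\sX^r(\kappa_\sX)\circ F_{r+1}\simeq F_r$ by induction on $r$. The base case reduces---using that $\CSS_\sX(-)$ acts pointwise (\cref{prop:CSSfun}), that $\kappa_\sX Y_\bullet\simeq Y_0$ (\cref{rem:CSSdist}), that $e$ preserves cores, and that $\kappa_\sX|_\sX\simeq\id_\sX$---to the identity $\kappa_\sX\circ\CSS_\sX(F_0)\circ e\simeq F_0$; the inductive step follows by applying $\CSS_\sX$ to the previous identity. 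The $F_r$ then assemble into a morphism $F\colon\sY\to\varlim_r\CSS_\sX^r(\sX)\simeq\Sh_{(\infty,\infty)}(\sC)$ of $\Dist_\sX^R$ lying over $\sX$, and the identity $F_{r+1}\simeq\CSS_\sX(F_r)\circ e$ at each level exhibits $F$ as a morphism of $\mathbf{Fix}_{\CSS}^R$. For uniqueness: any morphism $F'$ of $\mathbf{Fix}_{\CSS}^R$ has $F'_0\simeq\kappa_\sX$ by terminality of $\sX$, and reading off its commutation with the fixed-point equivalences levelwise forces $F'_{r+1}\simeq\CSS_\sX(F'_r)\circ e$; hence $F'\simeq F$.

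I expect the main obstacle to be the naturality step at the start of the second paragraph: promoting the levelwise equivalences $\Sh_{(\infty,r+1)}(\sC)\simeq\CSS_\sX(\Sh_{(\infty,r)}(\sC))$ of \cref{cor:compare} to a system compatible with the transition maps---concretely, that under these equivalences $\kappa_{\leq r+1}$ corresponds to $\CSS_\sX(\kappa_{\leq r})$, equivalently that the inclusion $\Sh_{(\infty,r+1)}(\sC)\hookrightarrow\Sh_{(\infty,r+2)}(\sC)$ corresponds to $\CSS_\sX$ applied to $\Sh_{(\infty,r)}(\sC)\hookrightarrow\Sh_{(\infty,r+1)}(\sC)$. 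This is a diagram chase through \cref{prop:infty-sheaf} and \cref{def:uncurry}, and it is exactly what makes the terminal-sequence picture valid; once it is in place, the remainder is the $\infty$-categorical shadow of the classical facts of Ad\'amek and Lambek---that the limit of the terminal sequence of an $\omega^\op$-continuous endofunctor is its terminal coalgebra and carries an invertible structure map. A smaller point to confirm along the way is that $\N^\op$ is weakly contractible (having a terminal object), so that \cref{thm:CSS-R-cts} and \cref{rem:completeness} apply to the tower.
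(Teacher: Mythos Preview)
Your proposal is correct and takes essentially the same approach as the paper: identify $\Sh_{(\infty,\infty)}(\sC)$ with the limit of the terminal $\CSS_\sX$-sequence $\dots\to\CSS_\sX^2(\sX)\to\CSS_\sX(\sX)\to\sX$ in $\Dist_\sX^R$ via \cref{prop:kappa-limit}, \cref{cor:compare}, \cref{rem:completeness}, and \cref{thm:CSS-R-cts}, and then appeal to Ad\'amek's construction for terminality. The only difference is that the paper outsources this last step to \cite[Corollaries~2.2.9 and~2.3.3]{goldthorpe:fixed} (and notes in \cref{rem:CSS-terminal} that one in fact gets the terminal $\CSS_\sX$-coalgebra), whereas you sketch the classical Ad\'amek/Lambek argument by hand; your levelwise uniqueness argument only shows the mapping space in $\mathbf{Fix}_{\CSS}^R$ is connected rather than contractible, so the coherent $\infty$-categorical version from that reference is exactly the ``shadow'' you anticipate needing.
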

\begin{proof}
    By \cref{prop:kappa-limit} and \cref{cor:compare}, we can compute $\Sh_{(\infty, \infty)}(\sC)$ as the limit
    $$
    \Sh_{(\infty, \infty)}(\sC) \simeq \varlim\left(\dots\to\CSS_\sX^3(\sX) \xrightarrow{\kappa_{\sX,*}} \CSS_\sX^2(\sX) \xrightarrow{\kappa_{\sX,*}} \CSS_\sX(\sX) \xrightarrow{\kappa_\sX} \sX\right)
    $$
    By \cref{rem:completeness}, this limit can be lifted to a limit in $\Dist_\sX^R$, proving that $\Sh_{(\infty, \infty)}(\sC)$ is indeed an $\sX$-distributor.
    Moreover, \cref{thm:CSS-R-cts} proves that the induced functor $\CSS_\sX(\Sh_{(\infty, \infty)}(\sC)) \to \Sh_{(\infty, \infty)}(\sC)$ is an equivalence.
    Since $\sX$ is the terminal object in $\Dist_\sX^R$, the theorem now follows from \cite[Corollaries 2.2.9 and 2.3.3]{goldthorpe:fixed}.
\end{proof}
\begin{remark}\label{rem:CSS-terminal}
    The crux of the above proof is the observation that $\Sh_{(\infty, \infty)}(\sC)$ can be realised as an instance of Ad\'amek's construction.
    In particular, this shows that $\Sh_{(\infty, \infty)}(\sC)$ has a stronger universal property: it is the terminal $\CSS_\sX$-coalgebra in $\Dist_\sX^R$.
\end{remark}

\begin{remark}\label{rem:absolute-dist}
    Let $\sC\simeq*$, so that $\sX\simeq\sS$.
    Then, $\sS$-distributors are also called \emph{absolute distributors}.
    Haugseng proved in \cite[Theorem 7.18]{haugseng} that there is an equivalence $\sY\Cat\simeq\CSS_\sS(\sY)$ between $\sY$-enriched categories and complete Segal spaces in $\sY$ for every absolute distributor $\sY$.
    
    Following \cite[Definition 3.0.1 and Remark 3.0.2]{goldthorpe:fixed}, this implies that we have an equivalence of categories $\Sh_{(n, r)}(*) \simeq \Cat_{(n, r)}$ for all $-2\leq n\leq\infty$ and finite $0\leq r\leq n$.
    Then, taking appropriate limits following \cite[Definition 3.0.4]{goldthorpe:fixed}, we moreover have equivalences $\Sh_{(\infty, \infty)}(*) \simeq \Cat_{(\infty, \infty)}$ and $\Sh_\omega(*)\simeq\Cat_\omega$.
    
    In particular, we see that both $\Cat_{(\infty, \infty)}$ and $\Cat_\omega$ are absolute distributors.
    Moreover, by \cite[Theorem 3.3.5]{goldthorpe:fixed}, $\Cat_{(\infty, \infty)}$ is simultaneously a universal fixed points with respect to enrichment and the construction of complete Segal spaces.
\end{remark}

\subsection{Geometric morphisms and sheafification}
\label{ssec:sheafification}

In this section, we prove comparison results between categories of sheaves.
We prove in \cref{cor:shv-functorial} that the construction $\Sh_{(n, r)}(-)$ or $\Sh_\omega(-)$ is functorial with respect to geometric morphisms, sending geometric morphisms to right adjoints with relatively left exact left adjoints.
We likewise prove that the truncation functors $\pi_{\leq r}:\Sh_{(\infty, r')}(\sC) \to \Sh_{(\infty, r)}(\sC)$ are relatively left exact in \cref{prop:truncation-Xprod,cor:truncation-Xprod}.
Finally, we show in \cref{thm:sheafification} that $\Sh_{(n, r)}(\sC)$ is a relatively left exact accessible localisation of $\Fun(\sC^\op, \Cat_{(n, r)})$, and likewise $\Sh_\omega(\sC)$ is a relatively left exact accessible localisation of $\Fun(\sC^\op, \Cat_\omega)$.

Throughout the section, fix a topos $\sX$, exhibited as a left exact accessible localisation of $\sP(\sC)$ for some small category $\sC$ throughout this section.

\begin{lemma}\label{lem:sifted-Xprod}
    Suppose $K$ is sifted in the sense of \cite[Definition 5.5.8.1]{lurie}.
    Then, for all topoi $\sX$, the functor $\varcolim:\Fun(K, \sX)\to\sX$ preserves fibre products over $\sX$, where $\sX$ is viewed as a full subcategory of $\Fun(K, \sX)$ spanned by the essentially constant functors.
\end{lemma}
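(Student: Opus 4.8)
The plan is to reduce the statement, by computing everything pointwise, to the claim that a sifted colimit in a topos commutes with a fibre product over a constant object; this will then follow by combining the universality of colimits in $\sX$ with the cofinality of the diagonal $K\to K\times K$.

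First I would set up the reduction. Fix a fibre product $F^1\times_G F^2$ in $\Fun(K,\sX)$ with $G$ essentially constant, and write $X$ for its common value, so that $G(k)\simeq X$ for every $k\in K$. Since $K$ is sifted it is in particular weakly contractible (sifted simplicial sets are weakly contractible, \cite[\S5.5.8]{lurie}), so the essentially constant functor $G$ has colimit $\varcolim G\simeq X$. As limits in $\Fun(K,\sX)$ are computed pointwise, $(F^1\times_G F^2)(k)\simeq F^1(k)\times_X F^2(k)$, and the claim becomes that the canonical map
$$
\varcolim_{k\in K}\bigl(F^1(k)\times_X F^2(k)\bigr)\;\longrightarrow\;\Bigl(\varcolim_{k\in K}F^1(k)\Bigr)\times_X\Bigl(\varcolim_{k\in K}F^2(k)\Bigr)
$$
is an equivalence.

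Next I would expand the right-hand side by applying universality of colimits in the $\infty$-topos $\sX$ twice. Viewing $F^1$ as a diagram in $\sX_{/X}$, its colimit is computed in $\sX$ and carries the structure map $\varcolim F^1\to X$; base-changing along $\varcolim F^2\to X$ and using that pullback functors between slices of a topos preserve colimits (\cite[\S6.1]{lurie}) gives $(\varcolim F^1)\times_X(\varcolim F^2)\simeq\varcolim_k\bigl(F^1(k)\times_X\varcolim_j F^2(j)\bigr)$, and a second application of universality inside each term yields $\varcolim_{k}\varcolim_{j}\bigl(F^1(k)\times_X F^2(j)\bigr)$, i.e.\ the colimit over $K\times K$ of $(k,j)\mapsto F^1(k)\times_X F^2(j)$. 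By the very definition of siftedness the diagonal $\delta\colon K\to K\times K$ is cofinal, so this colimit equals the colimit of the restricted diagram $\delta^{*}$, which is exactly $k\mapsto F^1(k)\times_X F^2(k)$. Composing these equivalences shows the displayed map is an equivalence, which is the assertion.

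The one point that needs genuine care is checking that this chain of identifications recovers the canonical comparison map, rather than merely exhibiting an abstract equivalence between source and target: one must track the structure maps through the two applications of universality and through the cofinality reduction. This is a bookkeeping exercise that goes through because every equivalence invoked is induced by the tautological cocones (on $F^1$, on $F^2$, and on the external product indexed by $K\times K$). Apart from that, the proof is a formal consequence of its two inputs — universality of colimits in a topos and cofinality of the diagonal for sifted $K$ — and requires no further calculation.
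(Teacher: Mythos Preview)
Your proof is correct and follows essentially the same approach as the paper: both reduce to the two inputs of universality of colimits in a topos and cofinality of the diagonal $K\to K\times K$ for sifted $K$. You are slightly more explicit than the paper in noting that $K$ is weakly contractible so that $\varcolim G\simeq X$, and in flagging the bookkeeping about the canonical comparison map, but the argument is otherwise identical.
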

\begin{proof}
    Suppose $F\to x\gets G$ is a cospan of functors $K\to\sX$, where $x\in\sX$ is viewed as a constant functor.
    Since $K$ is sifted, the diagonal $K\to K\times K$ is cofinal.
    Therefore,
    \[
        \left(\varcolim_{k\in K}F(k)\right)\times_x\left(\varcolim_{\ell\in K}G(\ell)\right) &\simeq \varcolim_{(k,\ell)\in K\times K}\Big(F(k)\times_x G(\ell)\Big) \tag{colimits are universal in $\sX$} \\
            &\simeq \varcolim_{j\in K}\Big(F(j)\times_xG(j)\Big) \tag{$K\to K\times K$ is cofinal}
    \]
    showing that $\varcolim : \Fun(K, \sX)\to\sX$ preserves the fibre product $F\times_xG$.
\end{proof}

\begin{lemma}\label{lem:Seg-Xprod}
    Fix an $\sX$-distributor $\sY$.
    Suppose a morphism $Y_\bullet\to Z_\bullet$ in $\SS_\sX(\sY)$ is a Segal equivalence.
    Then, for any cospan $Z_\bullet\to x\gets Z_\bullet'$ in $\SS_\sX(\sY)$ with $x\in\sX$, the induced map $Y\times_xZ'\to Z\times_xZ'$ is a Segal equivalence.
\end{lemma}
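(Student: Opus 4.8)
The plan is to verify the two defining conditions of a Segal equivalence from \cref{def:CSS} for the induced morphism $f : Y_\bullet\times_xZ'_\bullet \to Z_\bullet\times_xZ'_\bullet$, using throughout that fibre products in $\Cat_\sX(\sY)$ are computed levelwise in $\sY$ (since $\Cat_\sX(\sY)$ is a reflective subcategory of $\Fun(\bDelta^\op,\sY)$ by \cref{rem:Grpd+Cat,rem:Cat}) and that $x$, as an object of $\Cat_\sX(\sY)$, is a constant simplicial object.

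For full faithfulness \ref{E:ff}, I would first note that $Y_\bullet\times_xZ'_\bullet \simeq Y_\bullet\times_{Z_\bullet}(Z_\bullet\times_xZ'_\bullet)$, so that $f$ is the base change of $Y_\bullet\to Z_\bullet$ along the projection $Z_\bullet\times_xZ'_\bullet\to Z_\bullet$. It then suffices to observe that the class of morphisms of $\Cat_\sX(\sY)$ satisfying \ref{E:ff} is stable under arbitrary base change. This is a formal diagram chase at levels $0$ and $1$: using $(A\times_CB)\times(A\times_CB)\simeq(A\times A)\times_{C\times C}(B\times B)$ together with naturality of the source--target maps, the pullback square exhibiting \ref{E:ff} for $f$ is obtained by pasting and cancelling pullbacks in the one for $Y_\bullet\to Z_\bullet$. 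No distributor structure is needed for this step.

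For essential surjectivity \ref{E:es}, the idea is to express $|\Gp_\bullet(-)|$ of the two fibre products as a fibre product of geometric realisations. Since $\Gp$ is a right adjoint (\cref{cor:Gp}) it preserves limits, and since $x$ already lies in the subcategory $\Grpd(\sX)\subseteq\Cat_\sX(\sY)$ we have $\Gp_\bullet x\simeq x$; hence $\Gp_\bullet(Y_\bullet\times_xZ'_\bullet)\simeq\Gp_\bullet Y\times_x\Gp_\bullet Z'$ in $\Grpd(\sX)$, with underlying simplicial object the pointwise fibre product in $\Fun(\bDelta^\op,\sX)$, and similarly for $Z_\bullet$. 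Now \cref{lem:sifted-Xprod}, applied to the sifted category $K=\bDelta^\op$, shows that geometric realisation $|{-}| : \Fun(\bDelta^\op,\sX)\to\sX$ preserves fibre products over $\sX$, so $|\Gp_\bullet(Y_\bullet\times_xZ'_\bullet)|\simeq|\Gp_\bullet Y|\times_x|\Gp_\bullet Z'|$ and likewise with $Z_\bullet$ in place of $Y_\bullet$, naturally in the data. Because $Y_\bullet\to x$ factors through $Z_\bullet$, these identifications carry the map induced by $f$ to the base change of $|\Gp_\bullet Y|\to|\Gp_\bullet Z|$ along $|\Gp_\bullet Z'|\to x$; the former is an equivalence since $Y_\bullet\to Z_\bullet$ is a Segal equivalence, and equivalences are stable under base change, so $f$ satisfies \ref{E:es} as well.

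The only point that requires care is the bookkeeping in the full-faithfulness step --- keeping track of which objects the iterated fibre products are taken over and confirming that the levelwise squares paste together as claimed; the essential-surjectivity step is then a short consequence of \cref{lem:sifted-Xprod} and the fact that $\Gp$, the core functor, and $(-)^\sim$ are all right adjoints.
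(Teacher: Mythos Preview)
Your proposal is correct and follows essentially the same approach as the paper: the paper dispatches \ref{E:ff} in a single line (``limits commute with limits''), while you spell out the equivalent base-change formulation, and your argument for \ref{E:es} is verbatim the paper's---pull $\Gp_\bullet$ through the fibre product as a right adjoint fixing $\sX$, then apply \cref{lem:sifted-Xprod} with $K=\bDelta^\op$ (sifted by \cite[Lemma~5.5.8.4]{lurie}) to commute geometric realisation past the fibre product, reducing to a base change of an equivalence.
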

\begin{proof}
    The functor $Y\times_xZ'\to Z\times_xZ'$ is certainly fully faithful in the sense of \ref{E:ff}, since limits commute with limits.
    We need to show that the map is also essentially surjective in the sense of \ref{E:es}; that is, $|\Gp_\bullet(Y\times_xZ')|\to|\Gp_\bullet(Z\times_xZ')|$ is an equivalence in $\sX$.
    
    Since $\Gp_\bullet$ is a right adjoint that restricts to the identity on $\sX$, we have $\Gp_\bullet(Y\times_xZ')\simeq(\Gp_\bullet Y)\times_x(\Gp_\bullet Z')$ and likewise for $Z\times_xZ'$.
    On the other hand, since $\bDelta^\op$ is sifted by \cite[Lemma 5.5.8.4]{lurie}, geometric realisation also preserves fibre products over $\sX$ by \cref{lem:sifted-Xprod}.

    Therefore, $Y\times_xZ'\to Z\times_xZ'$ satisfies \ref{E:es} if and only if the map $|\Gp_\bullet Y|\times_x|\Gp_\bullet Z'|\to|\Gp_\bullet Z|\times_x|\Gp_\bullet Z'|$ is an equivalence, which follows from the fact that the is the base change of the map $|\Gp_\bullet Y|\to|\Gp_\bullet Z|$ along $|\Gp_\bullet Z'|\to x$, which is an equivalence since $Y\to Z$ is a Segal equivalence.
\end{proof}
\begin{proposition}\label{prop:Seg-Xprod}
    For an $\sX$-distributor $\sY$, the left adjoint $\Seg_\sX : \SS_\sX(\sY) \to \CSS_\sX(\sY)$ to inclusion preserves fibre products over $\sX$.
\end{proposition}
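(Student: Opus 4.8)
The plan is to deduce the statement from \cref{lem:Seg-Xprod} together with the observation that $\CSS_\sX(\sY)$, being a reflective subcategory of $\Cat_\sX(\sY)$, is closed under limits. Fix a cospan $Y^1_\bullet \to x \gets Y^2_\bullet$ in $\Cat_\sX(\sY)$ with $x \in \sX$, and let $P_\bullet := Y^1_\bullet \times_x Y^2_\bullet$ be the fibre product formed in $\Cat_\sX(\sY)$. Since the diagonal $\sX \subseteq \CSS_\sX(\sY)$ identifies $\Seg_\sX x$ with $x$, what I must show is that the canonical comparison map $\Seg_\sX P_\bullet \to \Seg_\sX Y^1_\bullet \times_x \Seg_\sX Y^2_\bullet$ is an equivalence. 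Write $Z^i_\bullet := \Seg_\sX Y^i_\bullet$ and let $\eta_i : Y^i_\bullet \to Z^i_\bullet$ be the units; these are Segal equivalences, and because $x$ is $\Seg_\sX$-local each structure map $Y^i_\bullet \to x$ factors essentially uniquely through $\eta_i$, so the pair $(\eta_1, \eta_2)$ is a map of cospans over $x$ and induces a canonical map $P_\bullet \to Q_\bullet := Z^1_\bullet \times_x Z^2_\bullet$.

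First I would record that $Q_\bullet$ is a complete Segal space: by \cref{prop:CSS} the category $\CSS_\sX(\sY)$ is an $\sX$-distributor, hence has all small limits, and since it is reflective in $\Cat_\sX(\sY)$ these limits agree with those of $\Cat_\sX(\sY)$; thus $Q_\bullet$, formed in $\Cat_\sX(\sY)$, already lies in $\CSS_\sX(\sY)$, so $\Seg_\sX Q_\bullet \simeq Q_\bullet$.

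Next I would show $P_\bullet \to Q_\bullet$ is inverted by $\Seg_\sX$ by factoring it as
$$
Y^1_\bullet \times_x Y^2_\bullet \;\longrightarrow\; Z^1_\bullet \times_x Y^2_\bullet \;\longrightarrow\; Z^1_\bullet \times_x Z^2_\bullet .
$$
\Cref{lem:Seg-Xprod}, applied to the Segal equivalence $\eta_1 : Y^1_\bullet \to Z^1_\bullet$ and the cospan $Z^1_\bullet \to x \gets Y^2_\bullet$, shows the first arrow is a Segal equivalence; applied to $\eta_2 : Y^2_\bullet \to Z^2_\bullet$ and the cospan $Z^2_\bullet \to x \gets Z^1_\bullet$ (and using the symmetry of the fibre product) it shows the second arrow is a Segal equivalence. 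The class of morphisms inverted by $\Seg_\sX$ satisfies two-out-of-three and contains every Segal equivalence, hence is closed under composition, so $P_\bullet \to Q_\bullet$ is inverted by $\Seg_\sX$. A morphism inverted by $\Seg_\sX$ with $\Seg_\sX$-local target exhibits its target as the localisation of its source; therefore $\Seg_\sX P_\bullet \simeq Q_\bullet$, and unwinding the construction identifies this equivalence with the canonical comparison map. This proves the proposition.

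I do not expect a genuine obstacle here: the content is carried entirely by \cref{lem:Seg-Xprod}, and the surrounding points --- that $\CSS_\sX(\sY)$ is closed under fibre products over $\sX$, and that the $\Seg_\sX$-equivalences are closed under composition --- are formal. The one spot meriting a word of care is that Segal equivalences are not asserted to satisfy two-out-of-three on their own, so the argument must pass through the class of $\Seg_\sX$-local equivalences they generate rather than through Segal equivalences directly.
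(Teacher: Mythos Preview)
Your proposal is correct and follows essentially the same approach as the paper: observe that the target fibre product $Z^1_\bullet\times_xZ^2_\bullet$ is already complete, factor the comparison map through the intermediate fibre product $Z^1_\bullet\times_xY^2_\bullet$, and apply \cref{lem:Seg-Xprod} to each factor. The only difference is that the paper asserts directly that the composite is a Segal equivalence (Segal equivalences are visibly closed under composition, since condition~\ref{E:ff} composes by the pasting lemma for pullbacks and condition~\ref{E:es} composes trivially), whereas you route through the class of $\Seg_\sX$-local equivalences; your detour is unnecessary but harmless.
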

\begin{proof}
    Suppose $Y\to x\gets Y'$ is a cospan in $\SS_\sX(\sY)$, where $x\in\sX$.
    Since $(\Seg_\sX Y)\times_x(\Seg_\sX Y')$ is a limit of complete Segal spaces, it is also a complete Segal space.
    Therefore, it suffices to show that the map $Y\times_xY'\to(\Seg_\sX Y)\times_x(\Seg_\sX Y')$ is a Segal equivalence, as it would then induce an equivalence $\Seg_\sX(Y\times_xY')\to(\Seg_\sX Y)\times_x(\Seg_\sX Y')$.
    
    Since $Y\times_xY'\to(\Seg_\sX Y)\times_x(\Seg_\sX Y')$ factors as
    \[
        Y\times_xY' \to (\Seg_\sX Y)\times_xY' \to (\Seg_\sX Y)\times_x(\Seg_\sX Y')
    \]
    the fact that it is a Segal equivalence follows from \cref{lem:Seg-Xprod}, since $Y\to\Seg_\sX Y$ and $Y'\to\Seg_\sX Y'$ are Segal equivalences.
\end{proof}

\begin{lemma}\label{lem:geom2geom}
    In the situation of \cref{lem:CSSfun}, suppose $\phi:\sY\to\sY'$ admits a left adjoint $\psi$.
    Then, the induced functor $\phi_*:\CSS_\sX(\sY)\to\CSS_{\sX'}(\sY')$ admits a left adjoint $\psi_!$.
    If moreover $\psi:\sY'\to\sY$ preserves fibre products over $\sX'$, then the same is true for $\psi_!$.
\end{lemma}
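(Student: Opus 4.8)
The plan is to realise $\psi_!$ as a composite of reflection with postcomposition, and then to chase how fibre products over the base topos pass through that composite. For the first assertion: write $i:\CSS_{\sX'}(\sY')\hookrightarrow\Fun(\bDelta^\op,\sY')$ for the inclusion, which is a reflective localisation---it is the composite $\CSS_{\sX'}(\sY')\subseteq\Cat_{\sX'}(\sY')\subseteq\Cat(\sY')\subseteq\Fun(\bDelta^\op,\sY')$ of reflective localisations by \cref{def:CSS,rem:Cat,rem:Grpd+Cat}---and similarly let $L$ denote a left adjoint to $\CSS_\sX(\sY)\hookrightarrow\Fun(\bDelta^\op,\sY)$. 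Since $\psi\dashv\phi$, postcomposition gives an adjunction $\psi_*\dashv\phi_*$ between the corresponding functor categories, and \cref{lem:CSSfun} tells us $\phi_*$ carries $\CSS_\sX(\sY)$ into $\CSS_{\sX'}(\sY')$. I would then set $\psi_!:=L\circ\psi_*\circ i$; the standard reflection argument gives natural equivalences $\Map(\psi_!Z,Y)\simeq\Map(\psi_*iZ,Y)\simeq\Map(iZ,\phi_*Y)\simeq\Map(Z,\phi_*Y)$ for $Z\in\CSS_{\sX'}(\sY')$ and $Y\in\CSS_\sX(\sY)$, using in turn that $Y$ is $L$-local, the adjunction $\psi_*\dashv\phi_*$, and that $\phi_*Y$ already lies in the full subcategory $\CSS_{\sX'}(\sY')$. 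Hence $\psi_!\dashv\phi_*$.

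Before treating fibre products, I would record that $\psi$ restricts on $\sX'$ to the (left exact) inverse image $g^*:\sX'\to\sX$. Writing $\iota$ for the inclusions of the topoi into their respective distributors, and using that $\phi$ extends $g_*$ together with the core-compatibility $\kappa_{\sX'}\phi\simeq g_*\kappa_\sX$ from the hypotheses of \cref{lem:CSSfun}, for $x\in\sX'$ and $y\in\sY$ one has $\Map(\psi\iota x,y)\simeq\Map(\iota x,\phi y)\simeq\Map(x,\kappa_{\sX'}\phi y)\simeq\Map(x,g_*\kappa_\sX y)\simeq\Map(g^*x,\kappa_\sX y)\simeq\Map(\iota g^*x,y)$, so $\psi\iota\simeq\iota g^*$; in particular $\psi$ carries $\sX'$ into $\sX$.

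Now assume $\psi$ preserves fibre products over $\sX'$. The crucial observation is that then $\psi_*$ carries $\Cat_{\sX'}(\sY')$ into $\Cat_\sX(\sY)$: for $Z\in\Cat_{\sX'}(\sY')$ the object $\psi_*Z$ has $(\psi_*Z)_0=\psi(Z_0)\in\sX$ since $Z_0\in\sX'$, and applying $\psi$ to the Segal pullbacks $Z_n\simeq Z_1\times_{Z_0}\cdots\times_{Z_0}Z_1$ (whose vertex $Z_0$ lies in $\sX'$) shows $\psi_*Z$ is again Segal. Consequently, for $Z\in\CSS_{\sX'}(\sY')$ the object $\psi_*iZ$ already lies in $\Cat_\sX(\sY)$, so $\psi_!\simeq\Seg_\sX\circ\psi_*\circ i'$ where $i':\CSS_{\sX'}(\sY')\hookrightarrow\Cat_{\sX'}(\sY')$, $\psi_*:\Cat_{\sX'}(\sY')\to\Cat_\sX(\sY)$, and $\Seg_\sX:\Cat_\sX(\sY)\to\CSS_\sX(\sY)$ is precisely the reflector of \cref{prop:Seg-Xprod}. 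Given a cospan $Z^1\to x\gets Z^2$ in $\CSS_{\sX'}(\sY')$ with $x\in\sX'$ (viewed via the diagonal), I would then argue that each of the three functors preserves the fibre product: $i'$ does because limits of complete Segal spaces---and of Segal space objects---are computed pointwise (\cref{rem:CSSfun}); $\psi_*$ does because it is pointwise postcomposition with $\psi$, which preserves fibre products over $\sX'$, sending the cospan's vertex to the constant object at $g^*x\in\sX$ by the previous paragraph; and $\Seg_\sX$ does by \cref{prop:Seg-Xprod}. Since $\Seg_\sX$ fixes the constant object at $g^*x$ (a constant object on an object of $\sX$ is already a complete Segal space), whence $\psi_!x$ is that constant object, this yields $\psi_!(Z^1\times_xZ^2)\simeq\psi_!Z^1\times_{\psi_!x}\psi_!Z^2$, as wanted. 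The only points requiring care---and the main obstacle to a naive argument---are recognising that $\psi_*$ lands inside $\Cat_\sX(\sY)$, which is what makes \cref{prop:Seg-Xprod} directly applicable and avoids having to analyse the Segal-ification reflector out of the whole functor category, and keeping track of the diagonal and constant embeddings so that the cospan's vertex is correctly followed from $\sX'$ to $\sX$ along $g^*$; everything else is formal.
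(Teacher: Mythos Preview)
Your argument is correct. The second half---factoring $\psi_!$ as $\Seg_\sX\circ\psi_*\circ i'$ through $\Cat_\sX(\sY)$ and then invoking \cref{prop:Seg-Xprod}---is exactly the paper's approach, though you are more careful than the paper in explicitly verifying that $\psi|_{\sX'}\simeq g^*$ so that the vertex of the cospan lands in $\sX$ after applying $\psi_*$; the paper leaves this implicit in its diagram chase.

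The genuine difference is in the first half. The paper does not construct $\psi_!$ directly: instead it chooses $\lambda\gg0$ so that $\phi$ is $\lambda$-accessible between locally $\lambda$-presentable categories, invokes \cref{lem:colim-stability} to see that $\lambda$-filtered colimits in $\CSS_\sX(\sY)$ and $\CSS_{\sX'}(\sY')$ are computed pointwise and hence preserved by $\phi_*$, and then applies the adjoint functor theorem. Your route is more elementary and more explicit---you exhibit $\psi_!$ as $L\circ\psi_*\circ i$ and verify the adjunction by hand---which has the advantage of not needing the accessibility bookkeeping of \cref{lem:colim-stability} at all for this lemma. The paper's route, on the other hand, is what makes $\phi_*$ a morphism of $\Dist_\sX^R$ in \cref{prop:CSSfun}, so the accessibility statement is needed elsewhere anyway; here it just happens to give the existence of $\psi_!$ for free.
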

\begin{proof}
    Choose $\lambda\gg0$ so that $\phi:\sY\to\sZ$ is a $\lambda$-accessible functor between locally $\lambda$-presentable categories.
    Then, \cref{lem:colim-stability} implies that $\lambda$-filtered colimits in $\CSS_\sX(\sY)$ and $\CSS_\sX(\sZ)$ are computed pointwise, and are thus preserved by $\phi_*$.
    In particular, \cite[Corollary 5.5.2.9]{lurie} implies that $\phi_*$ admits a left adjoint $\psi_!$.
    
    If $\psi$ preserves fibre products over $\sX'$, then the induced functor $\psi_* : \Cat_{\sX'}(\sY') \to \SS_\sX(\sY)$ is left adjoint to $\phi_* : \SS_\sX(\sY) \to \Cat_{\sX'}(\sY')$.
    Taking left adjoints of the functors in the commutative square
    $$
    \begin{tikzcd}
        \CSS_\sX(\sY) \ar[r, "\phi_*"]\ar[d, hook]
        & \CSS_{\sX'}(\sY') \ar[d, hook] \\
        \SS_\sX(\sY) \ar[r, "\phi_*"']
        & \Cat_{\sX'}(\sY')
    \end{tikzcd}
    $$
    yields the commutative square in the diagram
    $$
    \begin{tikzcd}
        \CSS_{\sX'}(\sY) \ar[r, hook] \ar[dr, "\id"']
        & \Cat_{\sX'}(\sY') \ar[r, "\psi_*"]\ar[d, "\Seg_{\sX'}"']
        & \SS_\sX(\sY) \ar[d, "\Seg_\sX"] \\
        & \CSS_{\sX'}(\sY') \ar[r, "\psi_!"']
        & \CSS_\sX(\sY)
    \end{tikzcd}
    $$
    where the vertical functors preserve fibre products over $\sX'$ by \cref{prop:Seg-Xprod}, and the inclusion preserves all limits from being a right adjoint.
    Therefore, $\psi_!$ preserves fibre products over $\sX'$ as well.
\end{proof}

\begin{corollary}\label{cor:shv-functorial}
    Given a geometric morphism $g_* : \Sh(\sC) \to \Sh(\sC')$ of topoi of sheaves, we have for all $-2\leq n\leq\infty$ and $0\leq r\leq n$ an adjunction
    \[
        g^* : \Sh_{(n, r)}(\sC') \rightleftarrows : \Sh_{(n, r)}(\sC) : g_*
    \]
    and similarly an adjunction
    \[
        g^* : \Sh_\omega(\sC') \rightleftarrows : \Sh_\omega(\sC) : g_*
    \]
    where each $g_*$ acts pointwise, and the corresponding left adjoints $g^*$ are relatively left exact.
\end{corollary}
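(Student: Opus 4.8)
The plan is to build the adjunctions level by level through the complete Segal space machinery, then assemble. Throughout write $\sX := \Sh(\sC)$ and $\sX' := \Sh(\sC')$, and recall that $g^*$ is left exact while $g_*$, being a right adjoint, preserves all limits. Call a functor $\phi\colon\sY\to\sY'$ from an $\sX$-distributor to an $\sX'$-distributor \emph{good} if it extends $g_*$, commutes with cores, preserves fibre products over $\sX$, and admits a left adjoint $\psi$ preserving fibre products over $\sX'$. The first step is the observation that $\CSS$ sends good functors to good functors: \cref{lem:CSSfun} produces $\phi_*\colon\CSS_\sX(\sY)\to\CSS_{\sX'}(\sY')$ acting pointwise, \cref{prop:CSS} makes source and target distributors, \cref{rem:CSSfun} shows $\phi_*$ still extends $g_*$, commutes with cores, and preserves fibre products over $\sX$, and \cref{lem:geom2geom} produces the left adjoint $\psi_!$ and shows it preserves fibre products over $\sX'$. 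The base case is $g_*\colon\sX\to\sX'$ itself (with $\sX,\sX'$ distributors over themselves): it trivially extends $g_*$ and commutes with the identity cores, it preserves fibre products over $\sX$ because $g_*$ preserves all limits, and its left adjoint $g^*$ preserves all fibre products by left exactness.

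Iterating the engine on the base case and using the inductive formula $\Sh_{(\infty,r+1)}(\sC)\simeq\CSS_\sX(\Sh_{(\infty,r)}(\sC))$ of \cref{cor:compare}, one obtains for every finite $r$ a good functor $g_*^{(r)}\colon\Sh_{(\infty,r)}(\sC)\to\Sh_{(\infty,r)}(\sC')$ acting pointwise, whose left adjoint $g^{*(r)}$ restricts on $\sX'$ to $g^*$ and preserves fibre products over $\sX'$; that is, $g^{*(r)}$ is relatively left exact (\cref{def:Xprod}). This settles the $(\infty,r)$ cases for finite $r$.

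For $r=\infty$, \cref{prop:kappa-limit} presents $\Sh_{(\infty,\infty)}(\sC)$ as $\varlim_r\Sh_{(\infty,r)}(\sC)$ along the cores $\kappa_{\leq r}$; since each $g_*^{(r)}$ acts pointwise it commutes with restriction to $\bSigma_r$, so the $g_*^{(r)}$ assemble to a pointwise functor $g_*^{(\infty)}$, which again extends $g_*$, commutes with cores, and preserves fibre products over $\sX$. Taking mates of the strictly commuting squares of right adjoints shows the $g^{*(r)}$ commute with the inclusions $\iota_r\colon\Sh_{(\infty,r)}\hookrightarrow\Sh_{(\infty,r+1)}$ (and with the $\pi_{\leq r}$), so they assemble to $g^{*(\infty)}\dashv g_*^{(\infty)}$ with $g^{*(\infty)}\circ\iota_r\simeq\iota_r\circ g^{*(r)}$. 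The hard part is the relative left exactness of $g^{*(\infty)}$: a left adjoint need not commute with the limits defining $\Sh_{(\infty,\infty)}$, so one cannot formally reduce to finite $r$. Instead I would use that every $Y\in\Sh_{(\infty,\infty)}(\sC')$ equals $\varcolim_r\iota_r\kappa_{\leq r}Y$, a sequential colimit which, computed in $\Fun(\bSigma_\infty^\op,\sX')$, is levelwise eventually constant with value $Y$, hence is also the colimit in $\Sh_{(\infty,\infty)}(\sC')$; since the reflective inclusion into $\Fun(\bSigma_\infty^\op,\sX')$ preserves limits, fibre products over $\sX'$ are computed levelwise, so for a cospan $Y\to x\leftarrow Y'$ with $x\in\sX'$ one gets $Y\times_xY'\simeq\varcolim_r(\iota_r\kappa_{\leq r}Y\times_x\iota_r\kappa_{\leq r}Y')$. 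Each term is a fibre product over $x\in\sX'$ of objects of $\Sh_{(\infty,r)}(\sC')$, hence lies in $\Sh_{(\infty,r)}(\sC')$ (reflective, so closed under limits), where $g^{*(\infty)}=\iota_r\circ g^{*(r)}$ is relatively left exact; applying $g^{*(\infty)}$, pulling out the colimit, and recombining — using that $\varcolim_r g^{*(\infty)}(\iota_r\kappa_{\leq r}Y)=g^{*(\infty)}Y$ is again levelwise eventually constant — yields $g^{*(\infty)}(Y\times_xY')\simeq g^{*(\infty)}Y\times_{g^*x}g^{*(\infty)}Y'$. The same argument applies verbatim to $\Sh_\omega(\sC)\subseteq\Sh_{(\infty,\infty)}(\sC)$: $g_*^{(\infty)}$ commutes with $\pi_{\leq\omega}=\varlim_r\pi_{\leq r}$ (\cref{cor:shomega}) hence restricts to $g_*^{(\omega)}$, its left adjoint $g^{*(\omega)}$ recovers $g^{*(\infty)}$ followed by the reflection $\pi_{\leq\omega}$, the decomposition $Y=\varcolim_r\iota_r\kappa_{\leq r}Y$ still holds inside $\Sh_\omega$, and $\Sh_{(\infty,r)}(\sC)\subseteq\Sh_\omega(\sC)$ remains reflective, so the levelwise computation goes through unchanged.

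Finally, for finite $n$ the category $\Sh_{(n,r)}(\sC)$ is the full subcategory of $\Sh_{(\infty,r)}(\sC)$ on the suitably truncated objects (for $r=0$ it is $\tau_{\leq n}\sX$, where the claim is just left exactness of $g^*$). As $g^*$ preserves finite limits, the terminal object, and all colimits, it preserves truncatedness, and so does $g_*$ as a right adjoint; hence the adjunctions built above restrict to these full subcategories, with $g_*$ still acting pointwise and $g^*$ still relatively left exact as the restriction of a relatively left exact functor to a full subcategory containing the ``spaces''. I expect the single genuine obstacle to be the relative left exactness of the assembled functors $g^{*(\infty)}$ and $g^{*(\omega)}$, precisely because the limit structure of $\Sh_{(\infty,\infty)}$ and $\Sh_\omega$ is not visible to a left adjoint; this forces the colimit-decomposition and eventual-constancy bookkeeping above in place of a formal limit-preservation argument.
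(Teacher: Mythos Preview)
Your iteration through \cref{lem:CSSfun} and \cref{lem:geom2geom} for finite $r$ is correct and is exactly what the paper intends (the corollary is stated without proof, immediately following \cref{lem:geom2geom}). The issues are in the two infinite cases.

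For $r=\infty$, the recombination step hinges on the claim that the sequence $A_r := g^{*(\infty)}(\iota_r\kappa_{\leq r}Y) \simeq \iota_r g^{*(r)}(\kappa_{\leq r}Y)$ is levelwise eventually constant in $\Fun(\bSigma_\infty^\op,\sX)$. You have not justified this. Knowing $g^{*(\infty)}\iota_r\simeq\iota_rg^{*(r)}$ places $A_r$ in $\Sh_{(\infty,r)}(\sC)$, but for the value at a fixed $[\vec k]$ to stabilise once $r\geq\dim[\vec k]$ you would need $\kappa_{\leq r}g^{*(r+1)}\simeq g^{*(r)}\kappa_{\leq r}$, a Beck--Chevalley condition that does \emph{not} follow from the mates you took. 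The clean fix is to observe that levelwise $g^*$ already sends $\Sh_{(\infty,r)}(\sC')$ into $\Sh_{(\infty,r)}(\sC)$: left exactness of $g^*$ preserves the Segal condition and the constancy of $Y_0$, and it preserves Rezk-completeness because the object of equivalences in a Segal object of a topos is a finite limit of the $Y_n$. Hence every $g^{*(r)}$, including $g^{*(\infty)}$, acts as pointwise $g^*$, and relative left exactness follows immediately from that of $g^*$ with no colimit decomposition needed.

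For $\omega$, your assertion that $g_*^{(\infty)}$ commutes with $\pi_{\leq\omega}=\varlim_r\pi_{\leq r}$ is wrong as written: the Beck--Chevalley map $\pi_{\leq r}g_*^{(r+1)}\to g_*^{(r)}\pi_{\leq r}$ is generally not an equivalence, since a right adjoint has no reason to commute with a reflection. The conclusion that $g_*^{(\infty)}$ restricts to $\Sh_\omega$ is nonetheless correct, but the argument should go through \cref{lem:ff-colim}\ref{it:Abar}: $\Sh_\omega(\sC)$ is the closure of $\bigcup_r\Sh_{(\infty,r)}(\sC)$ under limits inside $\Sh_{(\infty,\infty)}(\sC)$, and $g_*^{(\infty)}$ preserves both limits and each $\Sh_{(\infty,r)}$. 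Relative left exactness of $g^{*(\omega)}$ then follows by writing it as $\pi_{\leq\omega}\circ g^{*(\infty)}|_{\Sh_\omega(\sC')}$ and invoking \cref{cor:truncation-Xprod}.
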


\begin{lemma}\label{lem:filtered-cofinal}
    Let $K$ be a filtered simplicial set, and $(\sA^k\to\sB^k)_{k\in K}$ a diagram in $\Fun(\Delta[1], \Cat_\infty)$ consisting of cofinal functors.
    Then, the colimit of this diagram is again a cofinal functor.
\end{lemma}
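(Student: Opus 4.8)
The plan is to verify Quillen's Theorem A for $\infty$-categories \cite[Theorem 4.1.3.1]{lurie} by commuting a filtered colimit past a finite limit and past geometric realisation. Write $f_k : \sA^k \to \sB^k$ for the $k$-th functor of the given diagram, and let $f : \sA \to \sB$ be its colimit in $\Fun(\Delta[1],\Cat_\infty)$, so that $\sA = \varcolim_k \sA^k$ and $\sB = \varcolim_k \sB^k$. By Theorem A it is enough to show that $\sA\times_\sB\sB_{b/}$ is weakly contractible for every object $b\in\sB$. Throughout I would use that filtered colimits of $\infty$-categories are computed levelwise in the complete-Segal-space model, that finite limits of $\infty$-categories are too, and hence --- since finite limits commute with filtered colimits of spaces --- that finite limits commute with filtered colimits in $\Cat_\infty$.

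First I would fix $b$ and reduce to the case in which $b$ is carried by every stage. Since the objects of $\varcolim_k\sB^k$ form the filtered colimit of the spaces of objects of the $\sB^k$, the object $b$ admits a lift $b_0\in\sB^{k_0}$ for some $k_0\in K$. Restricting the whole diagram along the inclusion $K_{k_0/}\hookrightarrow K$, which is cofinal because $K$ is filtered, leaves $\sA$, $\sB$ and $f$ unchanged, and $K_{k_0/}$ has an initial object, hence is weakly contractible; so after relabelling I may assume $K$ has an initial object $k_0$ and that $b$ has a chosen lift $b_0\in\sB^{k_0}$. Transporting $b_0$ along the essentially unique maps $k_0\to k$ then gives a compatible family of objects $b_k\in\sB^k$ with $\varcolim_k\{b_k\}\simeq\{b\}$, each transition map of this family being an equivalence and $K$ being weakly contractible.

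Next I would prove the equivalence
$$
\sA\times_\sB\sB_{b/}\;\simeq\;\varcolim_k\bigl(\sA^k\times_{\sB^k}\sB^k_{b_k/}\bigr).
$$
Using $\sB_{b/}\simeq\{b\}\times_\sB\Fun(\Delta[1],\sB)$ --- the pullback of the source-evaluation $\Fun(\Delta[1],\sB)\to\sB$ along $b:\{b\}\to\sB$ --- the left-hand side is obtained from $\sA$, $\sB$, $\{b\}$ and $\Fun(\Delta[1],\sB)$ by a finite limit. Each of these four is the filtered colimit over $k$ of the corresponding object formed from $\sA^k$, $\sB^k$, $\{b_k\}$ and $\Fun(\Delta[1],\sB^k)$: for the first two by definition of $\sA$ and $\sB$; for $\{b\}$ by the previous paragraph; and for $\Fun(\Delta[1],\sB)$ because $\Fun(\Delta[1],-)$ commutes with filtered colimits of $\infty$-categories, as $\Delta[1]$ is a finite simplicial set. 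Commuting the finite limit past the filtered colimit then yields the displayed equivalence.

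Finally I would conclude. Each $f_k$ is cofinal, so Theorem A makes each $\sA^k\times_{\sB^k}\sB^k_{b_k/}$ weakly contractible; and since geometric realisation $|{-}| : \Cat_\infty\to\sS$ is a left adjoint, it takes the filtered colimit above to $\varcolim_k|\sA^k\times_{\sB^k}\sB^k_{b_k/}|\simeq\varcolim_k{*}\simeq|K|\simeq{*}$, using once more that $K$ is weakly contractible. Hence $\sA\times_\sB\sB_{b/}$ is weakly contractible for every $b$, so $f$ is cofinal. The one genuinely technical point is the third paragraph --- checking that the undercategory construction, together with its basepoint and its base change along $\sA^\bullet\to\sB^\bullet$, commutes with the filtered colimit --- which, once one is in the complete-Segal-space model where finite limits and filtered colimits are both computed levelwise in $\sS$, is a routine diagram chase; the rest is a formal application of Theorem A together with the cocontinuity of $|{-}|$.
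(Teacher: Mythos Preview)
Your proof is correct and follows essentially the same route as the paper's: both apply Quillen's Theorem~A, restrict along the cofinal inclusion $K_{k_0/}\hookrightarrow K$ to lift the object $b$, commute the undercategory and the pullback past the filtered colimit, and conclude from weak contractibility of each stage together with weak contractibility of the (filtered) index. The only cosmetic difference is that you justify the step $\bigl(\varcolim_k\sB^k\bigr)_{b/}\simeq\varcolim_k\sB^k_{b_k/}$ via the finite-limit formula $\sB_{b/}\simeq\{b\}\times_\sB\Fun(\Delta[1],\sB)$ and compactness of $\Delta[1]$, whereas the paper simply asserts it; one small point worth making explicit in your write-up is that after replacing $K$ by $K_{k_0/}$ the index remains filtered (not merely weakly contractible), since you need this for the finite-limit/filtered-colimit interchange in the next paragraph.
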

\begin{proof}
    Let $\sA\to\sB$ denote the colimit of the diagram of cofinal functors.
    By Quillen's Theorem A \cite[Theorem 4.1.3.1]{lurie}, we show for any $B\in\sB$ that $\sA_{B/} := \sA\times_\sB\sB_{B/}$ is weakly contractible.
    
    Choose $k\in K$ so that $B\in\sB^k$; that is, so that $B$ is in the essential image of the coprojection $\sB^k\to\sB$.
    Since $K$ is filtered, the canonical functor $K_{k/}\to K$ is cofinal.
    Therefore,
    \[
        \sA\times_\sB\sB_{B/} &\simeq \left(\varcolim_\ell\sA^\ell\right)\times_{\left(\varcolim_\ell\sB^\ell\right)}\left(\varcolim_\ell\sB^\ell\right)_{B/} \\
            &\simeq \left(\varcolim_{k\to\ell}\sA^\ell\right)\times_{\left(\varcolim_{k\to\ell}\sB^\ell\right)}\left(\varcolim_{k\to\ell}\sB^\ell\right)_{B/} \tag{cofinality of $K_{k/}\to K$} \\
            &\simeq \left(\varcolim_{k\to\ell}\sA^\ell\right)\times_{\left(\varcolim_{k\to\ell}\sB^\ell\right)}\left(\varcolim_{k\to\ell}\sB^\ell_{B/}\right) \tag{$B\in\sB^k$} \\
            &\simeq \varcolim_{k\to\ell}\left(\sA^\ell\times_{\sB^\ell}\sB^\ell_{B/}\right) \tag{filtered colimits are left exact in $\Cat_\infty$}
    \]
    By assumption, every $\sA^\ell\times_{\sB^\ell}\sB^\ell_{B/}$ is weakly contractible for every $k\to\ell$, implying the same for $\sA\times_\sB\sB_{B/}$, as desired.
\end{proof}
\begin{corollary}\label{cor:filtered-sifted}
    The full subcategory of $\Cat_\infty$ spanned by sifted categories is closed under filtered colimits.
\end{corollary}

\begin{proposition}\label{prop:truncation-Xprod}
    For $0\leq r < r' \leq \infty$, the functor $\pi_{\leq r} : \Sh_{(\infty, r')}(\sC) \to \Sh_{(\infty, r)}(\sC)$ is relatively left exact.
\end{proposition}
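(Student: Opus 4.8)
The statement has two parts: that $\pi_{\leq r}$ is left exact on $\sX$, and that it preserves fibre products over $\sX$. The first is immediate, since an essentially constant polysimplicial sheaf is already $W_{(\infty, r)}$-local, so $\pi_{\leq r}$ restricts to the identity on $\sX\subseteq\Sh_{(\infty, r')}(\sC)$, which is left exact. For the second, observe that the inclusions $\Sh_{(\infty, r)}(\sC)\subseteq\dots\subseteq\Sh_{(\infty, r')}(\sC)$ compose, so $\pi_{\leq r}$ is the composite of the functors $\pi_{\leq s}\colon\Sh_{(\infty, s+1)}(\sC)\to\Sh_{(\infty, s)}(\sC)$ for $r\leq s<r'$; since each such $\pi_{\leq s}$ restricts to the identity on $\sX$, a composite of functors preserving fibre products over $\sX$ again does. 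We may therefore treat the cases $r'=r+1$ and $r'=\infty$ separately.

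Suppose first that $r'=r+1$, and induct on $r$. For $r=0$, \cref{cor:compare} identifies $\Sh_{(\infty, 1)}(\sC)\simeq\CSS_\sX(\sX)$, under which $\pi_{\leq 0}$ becomes the truncation $\pi_\sX\colon\CSS_\sX(\sX)\to\sX$, i.e.\ geometric realisation $Y_\bullet\mapsto|Y_\bullet|$ (\cref{rem:CSSdist}). Fibre products of Segal space objects are computed pointwise, so for a cospan $Y^1_\bullet\to x\gets Y^2_\bullet$ with $x\in\sX$ one has $(Y^1\times_xY^2)_n\simeq Y^1_n\times_xY^2_n$ in the topos $\sX$ for every $n$; as $\bDelta^\op$ is sifted, \cref{lem:sifted-Xprod} gives $|Y^1_\bullet\times_xY^2_\bullet|\simeq|Y^1_\bullet|\times_x|Y^2_\bullet|$, so $\pi_{\leq 0}$ preserves fibre products over $\sX$. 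For the inductive step, fix $r\geq1$ and assume $\pi_{\leq r-1}\colon\Sh_{(\infty, r)}(\sC)\to\Sh_{(\infty, r-1)}(\sC)$ preserves fibre products over $\sX$. The inclusion $j\colon\Sh_{(\infty, r-1)}(\sC)\hookrightarrow\Sh_{(\infty, r)}(\sC)$ is limit-preserving, lies under $\sX$, and commutes with the core functors (which extract the degree-$0$ object, unchanged by $j$), so it satisfies the hypotheses of \cref{lem:CSSfun} with $g_*=\id_\sX$. Thus $j$ induces a pointwise-acting, fully faithful functor $j_*\colon\CSS_\sX(\Sh_{(\infty, r-1)}(\sC))\to\CSS_\sX(\Sh_{(\infty, r)}(\sC))$, which by \cref{cor:compare} is a functor $\Sh_{(\infty, r)}(\sC)\to\Sh_{(\infty, r+1)}(\sC)$; comparing essential images --- via \cref{def:uncurry} and \cref{lem:truncate}, a complete Segal space $F_\bullet$ valued in $\Sh_{(\infty, r)}(\sC)$ has all $F_n$ in $\Sh_{(\infty, r-1)}(\sC)$ exactly when $F$, read in $\Fun(\bSigma_\infty^\op, \sX)$, factors through $[-]_{<r}$ --- shows that $j_*$ is the standard inclusion $\Sh_{(\infty, r)}(\sC)\subseteq\Sh_{(\infty, r+1)}(\sC)$, whose left adjoint is $\pi_{\leq r}$. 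Since $\pi_{\leq r-1}$ is a left adjoint to $j$ and preserves fibre products over $\sX$ by hypothesis, \cref{lem:geom2geom} shows $\pi_{\leq r}$ does too, closing the induction.

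It remains to treat $r'=\infty$, where the plan is to bootstrap from the finite case. Every $Y\in\Sh_{(\infty, \infty)}(\sC)$ is the filtered colimit (over $s\geq0$) of the objects $\kappa_{\leq s}Y$, viewed in $\Sh_{(\infty, \infty)}(\sC)$: read in $\Fun(\bSigma_\infty^\op, \sX)$, the value at a polysimplex $[\vec k]$ is $\varcolim_s Y_{[\vec k]_{<s}}$, which stabilises at $Y_{[\vec k]}$ once $s\geq\dim[\vec k]$, so the colimit is $Y$. As $\pi_{\leq r}$ is a left adjoint, $\pi_{\leq r}Y\simeq\varcolim_s\pi_{\leq r}\kappa_{\leq s}Y$, with each $\pi_{\leq r}\kappa_{\leq s}Y$ computed by the finite-case functor $\pi_{\leq r}\colon\Sh_{(\infty, s)}(\sC)\to\Sh_{(\infty, r)}(\sC)$; and since $\kappa_{\leq s}$ is limit-preserving and fixes $\sX$, for a cospan $Y^1\to x\gets Y^2$ with $x\in\sX$ the finite case (already proven) gives $\pi_{\leq r}(Y^1\times_xY^2)\simeq\varcolim_s\bigl(\pi_{\leq r}\kappa_{\leq s}Y^1\times_x\pi_{\leq r}\kappa_{\leq s}Y^2\bigr)$. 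The one remaining step --- commuting this filtered colimit past the fibre product over $x\in\sX$ --- is where I expect the real work to lie: it requires a strengthening of \cref{lem:sifted-Xprod} valid in the distributor $\Sh_{(\infty, r)}(\sC)$ rather than in a topos, namely that sifted colimits there preserve fibre products over $\sX$. Such a statement should follow from the universal-colimit axiom \ref{D:univ-colim} of \cref{def:distributor}, and establishing it --- equivalently, showing that geometric realisation $\CSS_\sX(\sY)\to\sX$ preserves fibre products over $\sX$ for every $\sX$-distributor $\sY$ --- is the step I expect to be the main obstacle; it would also supply a base case for a direct induction and is presumably the ingredient feeding \cref{cor:truncation-Xprod}.
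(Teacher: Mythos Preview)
Your argument for finite $r'$ is essentially correct and matches the paper's inductive step. The genuine gap is the $r'=\infty$ case, which you yourself flag as incomplete. Your proposed route---proving that sifted colimits in an arbitrary $\sX$-distributor preserve fibre products over $\sX$---is plausible but is \emph{not} how the paper proceeds, and you would need to actually establish it rather than gesture at axiom~\ref{D:univ-colim}.

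The paper avoids the difficulty you isolate by organising the induction differently: instead of reducing to $r'=r+1$ and treating $r'=\infty$ separately, it proves the base case $r=0$ for \emph{every} $r'$ simultaneously, including $r'=\infty$. The trick is to pass from $\bSigma_{r'}$ to the larger category $\bDelta_{r'}\subseteq\bDelta^{\times\infty}$ (those sequences $[\vec k]$ with $k_n=0$ for $n\geq r'$, with no positivity constraint below $r'$), so that the diagonal $\sX\subseteq\Fun(\bDelta_{r'}^\op,\sX)$ is right adjoint to the colimit over $\bDelta_{r'}^\op$. For finite $r'$ one has $\bDelta_{r'}\simeq\bDelta^{\times r'}$, hence $\bDelta_{r'}^\op$ is sifted; for $r'=\infty$ one has $\bDelta_\infty\simeq\varcolim_r\bDelta_r$, and the paper shows separately (\cref{cor:filtered-sifted}) that sifted categories are closed under filtered colimits. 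Then \cref{lem:sifted-Xprod} gives the base case for all $r'$ at once. The inductive step then carries $r'=\infty$ along for free, since $\CSS_\sX(\Sh_{(\infty,\infty)}(\sC))\simeq\Sh_{(\infty,\infty)}(\sC)$ by \cref{thm:CSS-terminal}, and $\pi_{\leq r}:\Sh_{(\infty,\infty)}(\sC)\to\Sh_{(\infty,r)}(\sC)$ is $\pi_{\leq(r-1),!}$ just as in the finite case.

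In short: your decomposition forces you into the colimit-commutation problem you identified, whereas the paper's decomposition---induct on $r$ with $r'$ arbitrary throughout---never meets it.
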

\begin{proof}
    We prove the proposition by induction on $r$, so suppose first that $r = 0$.

    Let $\bDelta_\infty$ denote the full subcategory of $\bDelta^{\times\infty}$ spanned by those $[\vec k] = (k_0, k_1, k_2, \dots)$ such that $k_n=0$ for all $n\gg0$.
    For $0\leq r'\leq\infty$, denote by $\bDelta_{r'}$ the full subcategory of $\bDelta_\infty$ spanned by those $[\vec k]$ such that $k_n=0$ for all $n\geq r'$.
    Then, we have a fully faithful inclusion $\bSigma_{r'}\subseteq\bDelta_{r'}$ for every $0\leq r'\leq\infty$.
    
    The functor $\pi_{\leq0} : \Sh_{(\infty, r')}(\sC) \to \Sh_{(\infty, 0)}(\sC) \simeq \sX$ is left adjoint to the diagonal $\sX\subseteq\Sh_{(\infty, r')}(\sC)$.
    The composite of right adjoints
    $$
    \sX\subseteq\Sh_{(\infty, r')}(\sC) \subseteq \Fun(\bSigma_{r'}^\op, \sX) \subseteq \Fun(\bDelta_{r'}^\op, \sX)
    $$
    where the rightmost inclusion is right Kan extension along $\bSigma_{r'}\subseteq\bDelta_{r'}$ is precisely the diagonal inclusion, which implies that $\pi_{\leq0}$ is the restriction of the left adjoint $\varcolim : \Fun(\bDelta_{r'}^\op, \sX)\to\sX$ to the diagonal.
    By \cref{lem:sifted-Xprod}, it suffices to prove that $\bDelta_{r'}^\op$ is sifted for every $0\leq r'\leq\infty$.
    
    For $r'<\infty$, note that $\bDelta_{r'} \simeq \bDelta^{\times r'}$.
    Since $\bDelta^\op$ is sifted by \cite[Lemma 5.5.8.4]{lurie}, and \cite[Corollary 4.1.1.13 and Proposition 4.1.1.3(2)]{lurie} imply that sifted categories are closed under finite products, it follows that $\bDelta_{r'}^\op$ is sifted for $r'$ finite.
    Finally, since $\bDelta_\infty \simeq \varcolim(\bDelta_0\subseteq\bDelta_1\subseteq\bDelta_2\subseteq\dots)$, it follows from \cref{cor:filtered-sifted} that $\bDelta_\infty^\op$ is sifted as well.
    
    Now, let $r > 0$, and suppose all $\pi_{\leq(r-1)}$ preserve fibre products over $\sX$.
    Then,
    $$
    \pi_{\leq(r-1),*} : \Fun(\bDelta^\op,\Sh_{(\infty,r'-1)}(\sC))\to\Fun(\bDelta^\op,\Sh_{(\infty,r-1)}(\sC))
    $$
    restricts to a functor $\SS_\sX(\Sh_{(\infty,r'-1)}(\sC))\to\SS_\sX(\Sh_{(\infty,r-1)}(\sC))$ that also preserves fibre products over $\sX$.
    
    As $\pi_{\leq r}\simeq\pi_{\leq(r-1), !}$ is the image of $\pi_{\leq(r-1)}$ under $\CSS_\sX : \Dist_\sX^L\to\Dist_\sX^L$, and $\pi_{\leq(r-1)}$ preserves fibre products over $\sX$ by assumption, it follows by \cref{lem:geom2geom} that $\pi_{\leq r}$ preserves fibre products over $\sX$ as well.
\end{proof}
\begin{corollary}\label{cor:truncation-Xprod}
    The localisation functor $\pi_{\leq\omega} : \Sh_{(\infty, \infty)}(\sC) \to \Sh_\omega(\sC)$ is relatively left exact.
\end{corollary}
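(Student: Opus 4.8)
The plan is to bootstrap from \cref{prop:truncation-Xprod} via the formula $\pi_{\leq\omega}\simeq\varlim_r\pi_{\leq r}$ of \cref{cor:shomega}, using that limits commute with limits; nothing deeper is needed.

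First I would dispatch the condition on the restriction to $\sX$. We have $\sX\subseteq\Sh_\omega(\sC)\subseteq\Sh_{(\infty,\infty)}(\sC)$, and since the inclusion $\Sh_\omega(\sC)\hookrightarrow\Sh_{(\infty,\infty)}(\sC)$ is fully faithful with left adjoint $\pi_{\leq\omega}$ (\cref{cor:shomega}), the composite $\pi_{\leq\omega}\circ(\Sh_\omega(\sC)\hookrightarrow\Sh_{(\infty,\infty)}(\sC))$ is equivalent to $\id$. Precomposing with $\sX\hookrightarrow\Sh_\omega(\sC)$ shows that $\pi_{\leq\omega}|_\sX$ is equivalent to the inclusion $\sX\subseteq\Sh_\omega(\sC)$. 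As $\Sh_\omega(\sC)$ is an $\sX$-distributor by \cref{thm:CSS-initial}, this inclusion admits a left adjoint $\pi_\sX$ by \ref{D:pi+kappa}, hence preserves all limits; in particular $\pi_{\leq\omega}|_\sX$ is left exact.

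Next I would show $\pi_{\leq\omega}$ preserves fibre products over $\sX$. Fix a cospan $X^1\to x\gets X^2$ in $\Sh_{(\infty,\infty)}(\sC)$ with $x\in\sX$. Since $x\in\sX\subseteq\Sh_\omega(\sC)$, \cref{cor:shomega} gives $\pi_{\leq\omega}x\simeq x$; likewise $\pi_{\leq r}x\simeq x$ for every finite $r$ because $x\in\sX\subseteq\Sh_{(\infty,r)}(\sC)$. Because $\Sh_\omega(\sC)\hookrightarrow\Sh_{(\infty,\infty)}(\sC)$ is a right adjoint, the fibre product $\pi_{\leq\omega}X^1\times_{\pi_{\leq\omega}x}\pi_{\leq\omega}X^2$ may be formed in $\Sh_{(\infty,\infty)}(\sC)$. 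With all limits taken there, I would then compute
\[
    \pi_{\leq\omega}(X^1\times_xX^2) &\simeq \varlim_r\pi_{\leq r}(X^1\times_xX^2) \tag{\cref{cor:shomega}} \\
        &\simeq \varlim_r\left(\pi_{\leq r}X^1\times_x\pi_{\leq r}X^2\right) \tag{\cref{prop:truncation-Xprod}} \\
        &\simeq \left(\varlim_r\pi_{\leq r}X^1\right)\times_x\left(\varlim_r\pi_{\leq r}X^2\right) \tag{limits commute with limits} \\
        &\simeq \pi_{\leq\omega}X^1\times_x\pi_{\leq\omega}X^2 \tag{\cref{cor:shomega}}
\]
where the third step uses that $\varlim_r$ commutes with the pullback together with $\varlim_r\pi_{\leq r}x\simeq\pi_{\leq\omega}x\simeq x$. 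Since every equivalence in the chain is the canonical comparison map, so is the composite, giving the desired preservation. Combined with the previous paragraph, $\pi_{\leq\omega}$ is relatively left exact.

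I do not expect a genuine obstacle here: the only care required is tracking which category each limit is computed in, and this is harmless because every inclusion in play — $\sX\subseteq\Sh_\omega(\sC)$, $\Sh_\omega(\sC)\subseteq\Sh_{(\infty,\infty)}(\sC)$, and each $\Sh_{(\infty,r)}(\sC)\subseteq\Sh_{(\infty,\infty)}(\sC)$ from \cref{lem:truncate} — is a right adjoint and hence preserves all limits.
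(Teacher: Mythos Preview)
Your proposal is correct and matches the approach implicit in the paper, which states the corollary without proof immediately after \cref{prop:truncation-Xprod}: one simply combines the formula $\pi_{\leq\omega}\simeq\varlim_r\pi_{\leq r}$ from \cref{cor:shomega} with the relative left exactness of each $\pi_{\leq r}$ and the fact that limits commute with limits. Your care in tracking where each limit is computed, and in separately verifying that $\pi_{\leq\omega}|_\sX$ is the limit-preserving inclusion $\sX\subseteq\Sh_\omega(\sC)$, fills in exactly the details the paper leaves to the reader.
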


The remainder of this section is dedicated to proving the following result:

\begin{theorem}[Sheafification]\label{thm:sheafification}
    For $-2\leq n\leq\infty$ and $0\leq r\leq n+2$, the category $\Sh_{(n, r)}(\sC)$ includes fully faithfully into $\Fun(\sC^\op, \Cat_{(n, r)})$, and this inclusion admits a relatively left exact left adjoint.
    
    Likewise, $\Sh_\omega(\sC)$ includes fully faithfully into $\Fun(\sC^\op, \Cat_\omega)$, and the inclusion admits a relatively left exact left adjoint.
\end{theorem}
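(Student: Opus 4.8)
First I would observe that $\Fun(\sC^\op,\Cat_{(n,r)})$ is itself an instance of the construction of \cref{sec:sheaves}, namely the category $\Sh_{(n,r)}(\sP(\sC))$ of sheaves of $(n,r)$-categories for the presheaf topos $\sP(\sC)$ (that is, $\sC$ with the trivial topology), and likewise $\Fun(\sC^\op,\Cat_\omega)\simeq\Sh_\omega(\sP(\sC))$. Indeed, currying gives $\Fun(\bSigma_\infty^\op,\sP(\sC))\simeq\Fun(\sC^\op,\sP(\bSigma_\infty))$, and by \cref{rem:infty-sheaf} the class $W_{(n,r)}$ over $\sP(\sC)$ is, levelwise over $\sC$, the class $W_{(n,r)}$ over the one-point site; hence a $W_{(n,r)}$-local object is exactly a functor $\sC^\op\to\Sh_{(n,r)}(*)$, and $\Sh_{(n,r)}(*)\simeq\Cat_{(n,r)}$ by \cref{rem:absolute-dist} (for finite $n$ and $r\in\{n+1,n+2\}$ this identification is either taken as the definition of $\Cat_{(n,r)}$ or proved by the same localisation argument). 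The $\omega$-statement follows because $\Fun(\sC^\op,-)$ commutes with the limit of \cref{def:shomega}, equivalently with the reflective-subcategory description of \cref{cor:shomega}.

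Next I would exploit that the given left exact accessible localisation $(-)^\#:\sP(\sC)\to\sX$ is the inverse image of a fully faithful geometric morphism $g_*:\sX\hookrightarrow\sP(\sC)$, and apply \cref{cor:shv-functorial} to it. This yields adjunctions $g^*:\Sh_{(n,r)}(\sP(\sC))\rightleftarrows\Sh_{(n,r)}(\sC):g_*$ and $g^*:\Sh_\omega(\sP(\sC))\rightleftarrows\Sh_\omega(\sC):g_*$ in which $g_*$ acts pointwise by post-composition with $\sX\hookrightarrow\sP(\sC)$ and $g^*$ is relatively left exact. Concretely, $g^*$ is induced by pointwise post-composition with $(-)^\#$: this is a colimit-preserving and \emph{left exact} functor $\Fun(\bSigma_\infty^\op,\sP(\sC))\to\Fun(\bSigma_\infty^\op,\sX)$ which carries every generator of $W_{(n,r)}$ to the corresponding generator over $\sX$ --- the spine inclusions and the maps $|\bI|_U^\#\to\Delta_U[0]$ since $\Delta[\vec k,-]$ commutes with it and $(h_U)^\#=h_U^\#$, and the maps of types \ref{W:truncate} and \ref{W:trivial} since $(-)^\#$ preserves finite products --- so it descends to a left adjoint onto $\Sh_{(n,r)}(\sC)$. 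This description also makes the edge cases $r\in\{n+1,n+2\}$, formally outside the hypotheses of \cref{cor:shv-functorial}, require no separate argument, since the proof of \cref{cor:shv-functorial} through \cref{lem:geom2geom} never uses $r\leq n$.

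It then remains to package the three conclusions. For full faithfulness: $\sX\hookrightarrow\sP(\sC)$ is fully faithful, hence so is post-composition with it, and a short check with the adjunction $g^*\dashv g_*$ (that $\Map(w,g_*X)\simeq\Map(g^*w,X)$ is an equivalence for every generator $w$ of $W_{(n,r)}$) shows $g_*$ factors through $\Sh_{(n,r)}(\sP(\sC))$, so $g_*$ exhibits $\Sh_{(n,r)}(\sC)$ as a reflective localisation of $\Fun(\sC^\op,\Cat_{(n,r)})$ with reflector $g^*$. For strong reflectivity: since $\sX$ is an accessible localisation of $\sP(\sC)$, the inclusion $\sX\hookrightarrow\sP(\sC)$ and hence $g_*$ is $\lambda$-accessible for some $\lambda\gg0$ (noting that $\lambda$-filtered colimits in both $\Sh$-categories are computed after inclusion into the respective functor categories, as $W_{(n,r)}$ is of small generation), so the localisation is accessible. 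For relative left exactness: \cref{cor:shv-functorial} already gives that $g^*$ is left exact on the topos $\sP(\sC)$ of the source, where it equals $(-)^\#$, and that it preserves fibre products over $\sP(\sC)$; since the subcategory of sheaves of spaces relevant to the target --- which is $\sX=\Sh(\sC)$ itself when $n=\infty$, and its $n$-truncated part for finite $n$ --- sits inside $\sP(\sC)$ through a limit-preserving embedding and $g^*$ restricts on it to the left exact diagonal inclusion into $\Sh_{(n,r)}(\sC)$, we conclude that $g^*$ is relatively left exact in the sense of \cref{def:Xprod}. The same argument applies verbatim to $\Sh_\omega$.

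The part I expect to require the most care is the identification in the first paragraph: commuting $\Fun(\sC^\op,-)$ past the defining localisations --- both the polysimplicial localisation $W_{(n,r)}$ and, for $\Sh_\omega$, the transfinite limit of \cref{def:shomega} --- so as to recognise $\Fun(\sC^\op,\Cat_{(n,r)})$ and $\Fun(\sC^\op,\Cat_\omega)$ as the trivial-topology cases of the constructions of \cref{sec:sheaves}. Everything after that is a direct application of \cref{cor:shv-functorial} and \cref{def:Xprod}.
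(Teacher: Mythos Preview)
Your proposal is correct and follows essentially the same two-step strategy as the paper: first identify $\Fun(\sC^\op,\Cat_{(n,r)})$ (resp.\ $\Fun(\sC^\op,\Cat_\omega)$) with $\Sh_{(n,r)}$ (resp.\ $\Sh_\omega$) over the presheaf topos, then transport along the geometric morphism $\sX\hookrightarrow\sP(\sC)$ using the machinery behind \cref{cor:shv-functorial}. The paper's proof is the terse sentence ``Follows from combining \cref{prop:presheaves} below with \cref{lem:geom2geom},'' which is exactly your outline, and your observation that the restriction $r\leq n$ in \cref{cor:shv-functorial} is inessential (the argument via \cref{lem:geom2geom} goes through unchanged) matches the paper's choice to cite \cref{lem:geom2geom} directly.

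The one place you and the paper diverge slightly is in how the identification $\Sh_{(n,r)}(\sP(\sC))\simeq\Fun(\sC^\op,\Cat_{(n,r)})$ is argued. You give a direct currying argument, reading off from the generators of $W_{(n,r)}$ that locality is tested objectwise over $\sC$; the paper's \cref{prop:presheaves} instead proceeds inductively via the $\CSS$ machinery, using \cref{lem:presheaves} to commute $\Fun(\sC^\op,-)$ past each application of $\CSS_\sX$, and then passing to the relevant limits for $r=\infty$ and for $\omega$. Your approach is more elementary and avoids re-invoking the distributor framework, while the paper's stays closer to the iterative description of $\Sh_{(\infty,r)}$ already established in \cref{cor:compare}; both are valid and yield the same result.
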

\begin{proof}
    Follows from combining \cref{prop:presheaves} below with \cref{lem:geom2geom}.
\end{proof}

\begin{lemma}\label{lem:presheaves}
    Let $\sY$ be an $\sX$-distributor.
    For every small category $\sK$, the category $\Fun(\sK^\op, \sY)$ is a distributor over $\Fun(\sK^\op, \sX)$.
    Moreover, $\CSS_{\Fun(\sK^\op, \sX)}(\Fun(\sK^\op, \sY)) \simeq \Fun(\sK^\op, \CSS_\sX(\sY))$.
\end{lemma}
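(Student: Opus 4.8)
Write $\mathbf{X} := \Fun(\sK^\op, \sX)$ and $\mathbf{Y} := \Fun(\sK^\op, \sY)$, with inclusion $\mathbf X \subseteq \mathbf Y$ obtained by applying $\Fun(\sK^\op, -)$ to $\sX \subseteq \sY$. The plan is to treat the two assertions in turn: first I would verify that $\mathbf Y$ is an $\mathbf X$-distributor by checking \ref{D:loc-pres}--\ref{D:locality}, and then I would identify $\CSS_\mathbf{X}(\mathbf Y)$ with $\Fun(\sK^\op, \CSS_\sX(\sY))$ via the criterion of \cref{rem:CSS}. The guiding principle throughout is that $\Fun(\sK^\op, -)$ commutes with (co)limits, adjunctions, and full-subcategory inclusions, so that almost every ingredient is computed ``pointwise over $\sK^\op$''.

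For the distributor axioms: \ref{D:loc-pres} holds since a functor category out of a small category into a locally presentable category is locally presentable. For \ref{D:pi+kappa}, applying $\Fun(\sK^\op,-)$ to $\pi_\sX \dashv (\sX\subseteq\sY) \dashv \kappa_\sX$ produces the truncation $\pi_\mathbf{X}$ and core $\kappa_\mathbf{X}$ of $\mathbf X\subseteq\mathbf Y$, both acting pointwise. For \ref{D:univ-colim}: given $y\to x$ in $\mathbf Y$ with $x\in\mathbf X$, colimits in $\mathbf X_{/x}$ and $\mathbf Y_{/y}$ are created by the forgetful functors to $\mathbf X$ and $\mathbf Y$ and hence are pointwise, while the pullback functor satisfies $(F\times_x y)(k) \simeq F(k)\times_{x(k)} y(k)$, so \ref{D:univ-colim} for $\mathbf Y$ reduces to \ref{D:univ-colim} for $\sY$ at each $x(k)$. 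The delicate axiom is \ref{D:locality}: here $\mathbf Y_{/x}$ is \emph{not} the pointwise product of the slices $\sY_{/x(k)}$. Instead, an object of $\mathbf Y_{/x}$ is a natural transformation $F\Rightarrow x$, equivalently a section over $\sK^\op$ of the pullback along $x$ of the fibration $\Fun([1],\sY)\times_\sY\sX\to\sX$ (which sends an arrow to its target, so has fibre $\sY_{/x(k)}$ over $x(k)$); thus $\mathbf Y_{/x}$ is the $\infty$-category of sections of a Cartesian fibration over $\sK^\op$, which is a weighted limit of its fibres $\sY_{/x(k)}$. Since colimits in $\mathbf X$ are computed pointwise and weighted limits commute with limits, \ref{D:locality} for $\sY$ at each $x(k)$ yields $\mathbf Y_{/x}\simeq\varlim_i\mathbf Y_{/x_i}$ whenever $x\simeq\varcolim_i x_i$ in $\mathbf X$, which is \ref{D:locality} for $\mathbf Y$. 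Once this is done, $\mathbf Y$ is an $\mathbf X$-distributor, so \cref{prop:CSS} and \cref{rem:CSS} apply.

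For the second assertion, I would proceed in two steps. Because the Segal and groupoid conditions of \cref{def:Grpd+Cat} are pointwise, $\Cat(\mathbf X)\simeq\Fun(\sK^\op,\Cat(\sX))$ and $\Grpd(\mathbf X)\simeq\Fun(\sK^\op,\Grpd(\sX))$; since the condition $Y_0\in\mathbf X$ in \cref{def:Cat} is also detected pointwise (full-subcategory membership is pointwise), this gives $\Cat_\mathbf{X}(\mathbf Y)\simeq\Fun(\sK^\op,\Cat_\sX(\sY))$. Next, $(-)^\sim$ of \cref{lem:cores} is right adjoint to $\Grpd(\sX)\subseteq\Cat(\sX)$, so applying $\Fun(\sK^\op,-)$ exhibits the pointwise $(-)^\sim$ as right adjoint to $\Grpd(\mathbf X)\subseteq\Cat(\mathbf X)$; combined with $\kappa_\mathbf{X}$ acting pointwise, \cref{rem:Gp} shows $\Gp_\bullet\colon\Cat_\mathbf{X}(\mathbf Y)\to\Grpd(\mathbf X)$ acts pointwise, with value $\Gp_\bullet(Y(k))$ at $k$. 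Now by \cref{rem:CSS}, $Y_\bullet\in\Cat_\mathbf{X}(\mathbf Y)$ is a complete Segal space iff $\Gp_\bullet Y$ lies in the essential image of the diagonal $\mathbf X\subseteq\Grpd(\mathbf X)$; since this diagonal is $\Fun(\sK^\op,-)$ of the diagonal $\sX\subseteq\Grpd(\sX)$ and full-subcategory membership is pointwise, this holds iff each $\Gp_\bullet(Y(k))$ lies in the diagonal $\sX\subseteq\Grpd(\sX)$, i.e.\ iff each $Y(k)\in\CSS_\sX(\sY)$. Hence $\CSS_\mathbf{X}(\mathbf Y)$ is exactly the full subcategory of $\Fun(\sK^\op,\Cat_\sX(\sY))$ spanned by the functors landing in $\CSS_\sX(\sY)$, i.e.\ $\Fun(\sK^\op,\CSS_\sX(\sY))$, with the equivalence being the tautological one once both sides are viewed inside $\Fun(\bDelta^\op\times\sK^\op,\sY)$.

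The main obstacle is axiom \ref{D:locality}: unlike \ref{D:loc-pres}--\ref{D:univ-colim}, it does not reduce to a literally pointwise statement, and the key point is that the slice $\mathbf Y_{/x}$ of a functor category is the weighted limit, rather than the product, of the pointwise slices $\sY_{/x(k)}$. Everything else is routine bookkeeping around $\Fun(\sK^\op,-)$ commuting with the relevant constructions.
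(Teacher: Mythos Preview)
Your proposal is correct but diverges from the paper chiefly in how it handles axiom \ref{D:locality}. The paper does not verify \ref{D:univ-colim} and \ref{D:locality} separately; instead it invokes \cite[Corollary 1.2.5]{lurie:infty2}, which packages both into the single ``Cartesian-transformation'' condition $(*)$ on colimit diagrams in $\sX$ with pullback squares in $\sY$. Since colimits and pullbacks in $\Fun(\sK^\op,-)$ are pointwise, $(*)$ is manifestly checked objectwise, and the argument is two lines. Your route---recognising $\mathbf Y_{/x}$ as the category of sections of a Cartesian fibration over $\sK^\op$ with fibres $\sY_{/x(k)}$, and then commuting this ``weighted limit'' with the limit coming from \ref{D:locality} for $\sY$---is valid, but the step you summarise as ``weighted limits commute with limits'' hides some bookkeeping: one must check that the assignment $x\mapsto(\text{Cartesian fibration classifying }k\mapsto\sY_{/x(k)})$ is itself functorial and limit-preserving in the relevant sense before passing to sections. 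The paper's use of $(*)$ bypasses this entirely. For the second assertion, the paper argues that Segal equivalences (conditions \ref{E:ff} and \ref{E:es}) are detected pointwise, whereas you use the $\Gp$-essentially-constant criterion of \cref{rem:CSS}; both reductions are straightforward and of comparable length.
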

\begin{proof}
    $\Fun(\sK^\op, \sX)$ and $\Fun(\sK^\op, \sY)$ are certainly locally presentable, verifying \ref{D:loc-pres}.
    The inclusion $\sX\subseteq\sY$ preserves small limits and colimits, and limits and colimits in functor categories are computed pointwise, which proves \ref{D:pi+kappa}.
    By \cite[Corollary 1.2.5]{lurie:infty2}, $\Fun(\sK^\op, \sY)$ is then a distributor over $\Fun(\sK^\op, \sX)$ if and only if the following holds:
    \begin{itemize}
        \item[($*$)] For any simplicial set $J$ and natural transformation $\bar\alpha : \bar p\Rightarrow \bar q : J^\triangleright \to \Fun(\sK^\op, \sY)$ such that $\bar q$ is a colimit diagram in $\Fun(\sK^\op, \sX)$ and the naturality squares of $\alpha = \bar\alpha|_K$ are pullback squares, then $\bar p$ is a colimit diagram in $\Fun(\sK^\op, \sY)$ if and only if the naturality squares of $\bar\alpha$ are pullback squares.
    \end{itemize}
    This condition can be checked pointwise, and \cite[Corollary 1.2.5]{lurie:infty2} implies that the analogous results hold for $\sX\subseteq\sY$.
    Therefore, $\Fun(\sK^\op, \sY)$ is then a distributor over $\Fun(\sK^\op, \sX)$, as desired.
    
    Since $\Fun(\bDelta^\op, \Fun(\sK^\op, \sY))\simeq\Fun(\sK^\op, \Fun(\bDelta^\op, \sY))$ and fibre products are computed pointwise in functor categories, we have $\Cat_{\Fun(\sK^\op, \sX)}(\Fun(\sK^\op, \sY)) \simeq \Fun(\sK^\op, \SS_\sX(\sY))$.
    Finally, since the conditions for a transformation to be a Segal equivalence can be checked pointwise, this equivalence descends to an equivalence $\CSS_{\Fun(\sK^\op, \sX)}(\Fun(\sK^\op, \sY)) \simeq \Fun(\sK^\op, \CSS_\sX(\sY))$.
\end{proof}

\begin{proposition}\label{prop:presheaves}
    Let $-2\leq n\leq\infty$ and $0\leq r\leq n+2$.
    If $\sX \simeq \sP(\sC)$, then we have an equivalence $\Sh_{(n, r)}(\sC) \simeq \Fun(\sC^\op, \Cat_{(n, r)})$.
    Similarly, $\Sh_\omega(\sC) \simeq \Fun(\sC^\op, \Cat_\omega)$.
\end{proposition}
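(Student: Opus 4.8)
The plan is to show that, when $\sX\simeq\sP(\sC)$, the construction $\Sh_{(n,r)}(-)$ is computed \emph{pointwise over $\sC$}: since the localising class $W_{(n,r)}$ is independent of the chosen site (\cref{rem:infty-sheaf}) and, over a presheaf topos, its generators are ``external products'' in the $\sC$-direction, a polysimplicial sheaf over $\sP(\sC)$ is $W_{(n,r)}$-local if and only if it is objectwise a sheaf of $(n,r)$-categories over the point. The $\omega$ case, which is defined as a limit rather than a localisation, is then obtained by passing this identification through the limit.

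Concretely, I would take the tautological site presentation $\sX=\sP(\sC)=\Fun(\sC^\op,\sS)$, for which $(-)^\#=\id$ and $h_U^\#=h_U$, and use the equivalence $\Fun(\bSigma_\infty^\op,\sX)\simeq\Fun(\sC^\op,\Fun(\bSigma_\infty^\op,\sS))$. Writing $\sD:=\Fun(\bSigma_\infty^\op,\sS)$ and $\otimes$ for the copower of $\sD$ by spaces, the building blocks all factor as $\Delta_U[\vec k]\simeq h_U\otimes\Delta_*[\vec k]$, $\Sp_U[\vec k]\simeq h_U\otimes\Sp_*[\vec k]$, $|\bI|_U^\#\simeq h_U\otimes|\bI|_*^\#$, and $\bS^j\times\Delta_U[0]\simeq h_U\otimes(\bS^j\times\Delta_*[0])$, where the subscript $*$ denotes the corresponding construction over the point site; moreover the suspension $\Sigma$ acts only on the $\bSigma_\infty$-variable, so it commutes with $h_U\otimes(-)$. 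Consequently every generator of $W_{(n,r)}$ over $\sP(\sC)$ has the form $h_U\otimes g$ with $U$ ranging over $\sC$ and $g$ ranging over the generators of the analogously defined class over $\sS$, and conversely these exhaust the generating set. The key computation is the adjunction $\Map_{\Fun(\sC^\op,\sD)}(h_U\otimes d,F)\simeq\Map_\sD(d,F(U))$ (that is, $h_U\otimes(-)$ is left adjoint to evaluation at $U$), which shows that $F$ is local with respect to $\{h_U\otimes g:U\in\sC\}$ precisely when $F(U)$ is $g$-local for every $U$. Applying this to all generators simultaneously, $F$ is $W_{(n,r)}$-local if and only if $F(U)\in\Sh_{(n,r)}(*)$ for every $U$, so $\Sh_{(n,r)}(\sC)\simeq\Fun(\sC^\op,\Sh_{(n,r)}(*))$, and \cref{rem:absolute-dist} identifies the right-hand side with $\Fun(\sC^\op,\Cat_{(n,r)})$. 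Since $r=\infty$ is also obtained directly as such a localisation (with \ref{W:truncate} and \ref{W:trivial} vacuous), this settles $\Sh_{(\infty,\infty)}(\sC)$ as well. For $n=\infty$ with $r$ finite one could alternatively argue by induction on $r$, using \cref{cor:compare}, \cref{lem:presheaves}, and \cref{rem:absolute-dist} to get $\Sh_{(\infty,r+1)}(\sC)\simeq\CSS_\sX(\Sh_{(\infty,r)}(\sC))\simeq\CSS_{\Fun(\sC^\op,\sS)}(\Fun(\sC^\op,\Cat_{(\infty,r)}))\simeq\Fun(\sC^\op,\CSS_\sS(\Cat_{(\infty,r)}))\simeq\Fun(\sC^\op,\Cat_{(\infty,r+1)})$ as an equivalence of $\sX$-distributors; but the pointwise argument handles all $(n,r)$ uniformly.

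It remains to treat $\Sh_\omega(\sC)$, which by \cref{def:shomega} is the limit in $\hat{\Cat_\infty}$ of the tower of left adjoints $\cdots\xrightarrow{\pi_{\leq1}}\Sh_{(\infty,1)}(\sC)\xrightarrow{\pi_{\leq0}}\Sh_{(\infty,0)}(\sC)$. Here I would observe that each transition $\pi_{\leq r}:\Sh_{(\infty,r+1)}(\sC)\to\Sh_{(\infty,r)}(\sC)$ is the left adjoint to a fully faithful inclusion cut out by the extra pointwise generators of $W_{(\infty,r)}$ (those of \ref{W:trivial} with exponent $r$), hence is itself computed pointwise; under the equivalences $\Sh_{(\infty,s)}(\sC)\simeq\Fun(\sC^\op,\Cat_{(\infty,s)})$ of the previous step it corresponds to $\Fun(\sC^\op,\pi_{\leq r})$. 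Since $\Fun(\sC^\op,-)$ preserves limits in $\hat{\Cat_\infty}$, we obtain $\Sh_\omega(\sC)\simeq\varlim_r\Fun(\sC^\op,\Cat_{(\infty,r)})\simeq\Fun(\sC^\op,\varlim_r\Cat_{(\infty,r)})=\Fun(\sC^\op,\Cat_\omega)$, the last equality being the definition of $\Cat_\omega$ recalled in the introduction.

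The step I expect to be the main obstacle is the bookkeeping in the second paragraph: one has to unwind \cref{def:poly-sheaf}, \cref{def:spine}, and \cref{def:infty-sheaf} carefully to confirm that, over a presheaf topos, each $U$-local generator decomposes as $h_U\otimes g$ with the $\bSigma_\infty$-part $g$ a generator of the absolute class, and that varying $U$ and $g$ recovers exactly the generating set of $W_{(n,r)}$; once that is in place, the rest reduces to the tensor--hom adjunction and the fact that $\Fun(\sC^\op,-)$ preserves adjunctions and limits. A secondary point is matching the $\pi_{\leq r}$ (and, for the $(\infty,\infty)$ case, $\kappa_{\leq r}$) towers on the two sides of the equivalences, which follows either from the same pointwise analysis or from the functoriality in \cref{lem:presheaves} together with \cref{cor:compare}.
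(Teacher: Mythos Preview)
Your proposal is correct, and it takes a route that is genuinely different from the paper's own argument, though you do sketch the paper's approach as your ``alternative'' for the case $n=\infty$ with $r$ finite.

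The paper proceeds by cases. For $n=\infty$ and $r$ finite it argues by induction on $r$ using \cref{lem:presheaves} and \cref{cor:compare}, exactly as in your parenthetical alternative. For $r=\infty$ it does \emph{not} treat $W_{(\infty,\infty)}$ directly; instead it passes the tower of right adjoints $\kappa_{\leq r}$ through $\Fun(\sC^\op,-)$ via \cref{prop:kappa-limit}, just as both you and the paper do for the $\omega$-case with the $\pi_{\leq r}$ tower. Finally, for $n<\infty$ the paper localises $\Sh_{(\infty,r)}(\sC)\simeq\Fun(\sC^\op,\Cat_{(\infty,r)})$ at the generators \ref{W:truncate} and identifies the resulting pointwise condition with $F(U)\in\Cat_{(n,r)}$ by citing \cite[Proposition~6.1.7]{gepner-haugseng}. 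Your argument, by contrast, handles every $(n,r)$---including $(\infty,\infty)$---in one stroke: decompose each generator of $W_{(n,r)}$ over $\sP(\sC)$ as $h_U\otimes g$ and use the tensor--hom adjunction to reduce $W_{(n,r)}$-locality to objectwise locality over the point, then invoke \cref{rem:absolute-dist}. This is more uniform and avoids the case split; the paper's approach has the minor advantage that the $n<\infty$ step is self-contained (it identifies $\Cat_{(n,r)}$ inside $\Cat_{(\infty,r)}$ directly via \cite{gepner-haugseng}), whereas you outsource the identification $\Sh_{(n,r)}(*)\simeq\Cat_{(n,r)}$ entirely to \cref{rem:absolute-dist}. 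Note that \cref{rem:absolute-dist} as stated only asserts this for finite $0\le r\le n$ (plus the two infinite cases), so the edge cases $r=n+1,n+2$ with $n$ finite are not literally covered by that citation; this is a cosmetic gap rather than a mathematical one, and the paper's Gepner--Haugseng citation would fill it.
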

\begin{proof}
    Suppose first that $n = \infty$.
    If $r = 0$, then there is nothing to prove, as $\Sh_{(\infty, 0)}(\sC) \simeq \sP(\sC)$ and $\Cat_{(\infty, 0)}\simeq\sS$.
    Then, iteratively applying \cref{lem:presheaves} proves the proposition if $r$ is finite.
    For $r = \infty$, we have
    \[
        \Sh_{(\infty, \infty)}(\sC) &\simeq \varlim\left(\dots\to\Sh_{(\infty, 2)}(\sC) \xrightarrow{\kappa_{\leq1}} \Sh_{(\infty, 1)}(\sC) \xrightarrow{\kappa_{\leq0}} \Sh_{(\infty, 0)}(\sC)\right) \\
            &\simeq \varlim\left(\dots\to\Fun(\sC^\op, \Cat_{(\infty, 2)}) \xrightarrow{\kappa_{\leq1,*}} \Fun(\sC^\op, \Cat_{(\infty, 1)}) \xrightarrow{\kappa_{\leq0,*}} \Fun(\sC^\op, \Cat_{(\infty, 0)})\right) \\
            &\simeq \Fun\left(\sC^\op, \varlim\left(\dots\to\Cat_{(\infty, 2)}\xrightarrow{\kappa_{\leq2}} \Cat_{(\infty, 1)} \xrightarrow{\kappa_{\leq1}} \Cat_{(\infty, 0)}\right)\right) \\
            &\simeq \Fun(\sC^\op, \Cat_{(\infty, \infty)})
    \]
    Similarly,
    \[
        \Sh_\omega(\sC) &\simeq \varlim\left(\dots\to\Sh_{(\infty, 2)}(\sC) \xrightarrow{\pi_{\leq1}} \Sh_{(\infty, 1)}(\sC) \xrightarrow{\pi_{\leq0}} \Sh_{(\infty, 0)}(\sC)\right) \\
            &\simeq \varlim\left(\dots\to\Fun(\sC^\op, \Cat_{(\infty, 2)}) \xrightarrow{\pi_{\leq1,*}} \Fun(\sC^\op, \Cat_{(\infty, 1)}) \xrightarrow{\pi_{\leq0,*}} \Fun(\sC^\op, \Cat_{(\infty, 0)})\right) \\
            &\simeq \Fun\left(\sC^\op, \varlim\left(\dots\to\Cat_{(\infty, 2)}\xrightarrow{\pi_{\leq2}} \Cat_{(\infty, 1)} \xrightarrow{\pi_{\leq1}} \Cat_{(\infty, 0)}\right)\right) \\
            &\simeq \Fun(\sC^\op, \Cat_\omega)
    \]
    Now suppose $n < \infty$.
    Note that $\Sh_{(n, r)}(\sC)$ is the localisation of $\Sh_{(\infty, r)}(\sC)\simeq\Fun(\sC^\op, \Cat_{(\infty, r)})$ at the maps $\Sigma^m(\bS^{k-m}\times\Delta_U[0])\to\Sigma^m\Delta_U[0]$ for $m\geq r$, $k > n$, and $U\in\sC$.
    A functor $F : \sC^\op\to\Cat_{(\infty, r)}$ is local with respect to $\Sigma^m(\bS^{k-m}\times\Delta_U[0])\to\Sigma^m\Delta_U[0]$ if and only if $F(U)\in\Cat_{(\infty, r)}$ is local with respect to $\Sigma^m\bS^{k-m}\to\Sigma^m*$.
    By \cite[Proposition 6.1.7]{gepner-haugseng}, it follows that $F$ lies in $\Sh_{(n, r)}(\sC)$ if and only if $F(U)\in\Cat_{(n, r)}$ for every $U\in\sC$, as desired.
\end{proof}

\addcontentsline{toc}{section}{References}


\begin{thebibliography}{Lur09b}

\bibitem[Bar05]{barwick}
Clark Barwick.
\newblock {\em {$(\infty, n)\Cat$} as a closed model category}.
\newblock PhD thesis, University of Pennsylvania, 2005.
\newblock Unpublished.

\bibitem[BSP21]{barwick-schommer-pries}
Clark Barwick and Christopher Schommer-Pries.
\newblock On the unicity of the theory of higher categories.
\newblock {\em Journal of the American Mathematical Society}, 34:1011--1058,
  2021.
\newblock \href{https://arxiv.org/abs/1112.0040v6}{\UrlFont arXiv:1112.0040v6}.
\newblock \href {https://doi.org/10.1090/jams/972}
  {\path{doi:10.1090/jams/972}}.

\bibitem[GH15]{gepner-haugseng}
David Gepner and Rune Haugseng.
\newblock Enriched $\infty$-categories via non-symmetric $\infty$-operads.
\newblock {\em Advances in Mathematics}, 279:575--716, 2015.
\newblock \href{https://arxiv.org/abs/1312.3178v4}{\UrlFont arXiv:1312.3178v4}.
\newblock \href {https://doi.org/10.1016/j.aim.2015.02.007}
  {\path{doi:10.1016/j.aim.2015.02.007}}.

\bibitem[Gol23]{goldthorpe:fixed}
Z.~Goldthorpe.
\newblock Homotopy theories of {$(\infty, \infty)$}-categories as universal
  fixed points with respect to weak enrichment.
\newblock {\em International Mathematics Research Notices}, 2023.
\newblock \href{https://arxiv.org/abs/2307.00442}{\UrlFont arXiv:2307.00442}.
\newblock \href {https://doi.org/10.1093/imrn/rnad196}
  {\path{doi:10.1093/imrn/rnad196}}.

\bibitem[Hau15]{haugseng}
Rune Haugseng.
\newblock Rectification of enriched {$\infty$}-categories.
\newblock {\em Algebraic \& Geometric Topology}, 15(4):1931--1982, 2015.
\newblock \href{https://arxiv.org/abs/1312.3881v4}{\UrlFont arXiv:1312.3881v4}.
\newblock \href {https://doi.org/10.2140/agt.2015.15.1931}
  {\path{doi:10.2140/agt.2015.15.1931}}.

\bibitem[Lur09a]{lurie}
Jacob Lurie.
\newblock {\em Higher Topos Theory}, volume 170 of {\em the Annals of
  Mathematical Studies}.
\newblock Princeton University Press, 2009.
\newblock \href{https://arxiv.org/abs/math/0608040v4}{\UrlFont
  arXiv:math/0608040v4}.

\bibitem[Lur09b]{lurie:infty2}
Jacob Lurie.
\newblock {$(\infty,2)$}-categories and the {Goodwillie} calculus {I}, 2009.
\newblock Preprint. \href{https://arxiv.org/abs/0905.0462v2}{\UrlFont
  arXiv:0905.0462v2}.

\bibitem[Rez01]{rezk:css}
Charles Rezk.
\newblock A model for the homotopy theory of homotopy theory.
\newblock {\em Transactions of the American Mathematical Society},
  353:973--1007, 2001.
\newblock \href{https://arxiv.org/abs/math/9811037v3}{\UrlFont
  arXiv:9811037v3}.
\newblock \href {https://doi.org/10.1090/S0002-9947-00-02653-2}
  {\path{doi:10.1090/S0002-9947-00-02653-2}}.

\bibitem[RS22]{ragimov-schlank}
Shaul Ragimov and Tomer Schlank.
\newblock The {$\infty$}-categorical reflection theorem and applications, 2022.
\newblock Preprint. \href{https://arxiv.org/abs/2207.09244v1}{\UrlFont
  arXiv:2207.09244v1}.

\bibitem[Sim00]{simpson:svk}
Carlos Simpson.
\newblock A closed model structure for {$n$}-categories, internal {{\it Hom}},
  {$n$}-stacks and generalized {Seifert-Van Kampen}, 2000.
\newblock Preprint. \href{https://arxiv.org/abs/alg-geom/9704006v2}{\UrlFont
  arXiv:9704006v2}.

\bibitem[Sim11]{simpson}
Carlos Simpson.
\newblock {\em Homotopy Theory of Higher Categories}, volume~19 of {\em New
  Mathematical Monographs}.
\newblock Cambridge University Press, 2011.
\newblock
  \href{https://hal.archives-ouvertes.fr/hal-00449826/document}{\UrlFont
  hal-00449826}.
\newblock \href {https://doi.org/10.1017/CBO9780511978111}
  {\path{doi:10.1017/CBO9780511978111}}.

\end{thebibliography}
\end{document}